\documentclass[11pt,a4paper]{article}

\usepackage[pdftitle={Paper}, pdfauthor={}]{hyperref}

\usepackage{amsxtra,amssymb,amsmath,amsthm,amsbsy,enumerate}
\usepackage[all,2cell,v2]{xy}\UseTwocells\UseHalfTwocells
\usepackage{derivative}
\usepackage{comment}
\usepackage{xcolor}
\usepackage{graphicx}

 \theoremstyle{definition}
\newtheorem{theorem}	{Theorem}[subsection]
\newtheorem{lemma}	[theorem]{Lemma}
\newtheorem{proposition}[theorem]{Proposition}
\newtheorem{remark}	[theorem]{Remark}
\newtheorem{example}	[theorem]{Example}
\newtheorem{corollary}	[theorem]{Corollary}
\newtheorem{definition}	[theorem]{Definition}

\newtheorem*{theorem-1}	{Theorem \ref{thm:1} (Frames)}
\newtheorem*{theorem-2}	{Theorem \ref{thm:2} (Differentiation)}
\newtheorem*{theorem-3}	{Theorem \ref{thm:3} (van Est)}
\newtheorem*{theorem-4}	{Theorem \ref{thm:4} (Abstract differentiation)}

\def\id{{\rm id}}

\def\C{{\mathcal C}}

\def\M{{\mathcal M}}

\def\N{{\mathbb N}}
\def\R{{\mathbb R}}
\def\Z{{\mathbb Z}}

\def\Csev{{\mathcal C}^{Sev}}

\def\h{\mathfrak{h}}

\def\xto{\xrightarrow}
\def\to{\rightarrow}

\def\toto{\rightrightarrows}
\def\xfrom{\xleftarrow}

\def\then{\Rightarrow}

\def\mono{\rightarrowtail}

\def\J{\hat J}
\def\JJ{{\mathcal J}}

\def\Jgen{J}

\def\d{\mathbf{d}}

\def\T{{T^{[1]}}}

\def\nameD{interpolating double complex}

\def\colim{{\rm colim}\,}

\newcommand{\Ale}[1]{\texttt\tiny \marginpar{\textcolor{blue}{X A}} \textcolor{blue}{\texttt{\small #1}}}

\def\ve{\mathfrak{ve}}

\def\cosk{{\rm cosk}}
\def\sk{{\rm sk}}

\def\cat#1{{\rm #1}}

\def\supp{{\rm supp\,}}
\def\ord{{\rm ord\,}}

\def\O{{\mathcal O}}

\def\dtot{d^{tot}}

\newcommand{\un}[1]{\underline{#1}}

\newcommand{\comentar}[1]{\textcolor{red}{-- Here there is hidden text (modify command "comentar" tex-file) -- }}

\begin{document}

\title{\bf Geometric differentiation of simplicial manifolds}

\author{Alejandro Cabrera \and Matias del Hoyo}

\date{\today}

\maketitle

\abstract{
%
We provide a complete geometric solution to the problem of differentiating simplicial manifolds,
extending classical Lie theory and complementing existing homotopical and formal approaches within
a unifying framework.
First, we establish a normal form theorem setting a system of compatible tubular neighborhoods.
Building on this description, we
identify a differentiating ideal in the algebra of cochains, prove that the quotient is semi-free, and
interpret it as the Chevalley–Eilenberg algebra of the thus defined  higher Lie algebroid.
%
%
As an application, we introduce a higher version of the van Est map and prove a van Est isomorphism theorem in cohomology, under natural connectivity assumptions. 
Finally, we identify the algebraic mechanism underlying geometric
differentiation as a monoidal refinement of the dual Dold–Kan correspondence, providing a conceptual explanation of the construction and relating it to earlier homotopical and functor-of-points
approaches.
}

\tableofcontents


\section{Introduction}


Higher differential geometry is a rapidly developing area, paralleling the work of Lurie, Toen, and others in algebraic geometry. It uses homotopical and homological methods to study the geometry of manifolds and related spaces, providing a natural framework for describing symmetries defined up to isomorphisms or equivalences.  In recent years, the area has been strongly influenced by interactions with symplectic geometry, algebraic topology, and mathematical physics.
%
%
%
Two fundamental objects play a central role in this setting. On the global side, one finds simplicial manifolds and, in particular, higher Lie groupoids. They generalize Lie groups and groupoids by encoding higher compositions through horn-filling conditions rather than strict multiplication laws. 
On the infinitesimal side, one finds differential graded manifolds, also known as higher Lie algebroids or $NQ$-manifolds.
They allow coordinates of positive degree, their functions form differential graded algebras, encoding higher brackets generalizing
Lie algebras and Lie algebroids.

A central problem of the area is to elucidate the interplay between these two frameworks via
differentiation and integration, generalizing classical Lie theory.
The case over a point is thoroughly studied in  \cite{getzler09,henriques,R-NV20} and the more recent preprints  \cite{rw,Rog25}. 
The present paper deals with the problem of differentiating higher Lie groupoids and general simplicial manifolds. 
Important contributions in this direction follow a formal supergeometric functor-of-points approach by Severa \cite{Severa}, further explored in \cite{Li, LRWZ, dorsch}, or an abstract derived geometry approach by Pridham and others \cite{Nui-thesis, Pri10, Pri20}, or focus on the particular order $2$ cases \cite{angulo-cueca, L-BS}. 
In this work we develop a general, simple, geometric differentiation in higher Lie theory, complementing and unifying the above approaches, and providing a complete and independent answer to the problem.



Our differentiation functor, from simplicial manifolds to higher Lie algebroids, is developed in
a way that closely parallels classical Lie theory, keeping the first nontrivial terms in the Taylor
expansions of the zeroth face map, which embodies the higher multiplications. Within our theory,
graded commutativity, freeness, and the overall higher algebraic structures emerge naturally at
the infinitesimal level, from the interaction between truncation, normalization, and the simplicial
structure. Moreover, our approach also leads to a higher van Est theory allowing for differentiation and integration of cochains and forms.

\medskip

{\bf Main contributions.} 
Starting from a simplicial manifold $(G_n,d_i,s_j)$, the first step consists in gaining local control of the simplicial structure near the space of objects $G_0$ by working with a suitable system of tubular neighborhoods which we call a \emph{frame} (Definition \ref{def:frame}). We regard each pair $(G_n,G_0)$ as a microbundle in the sense of Milnor via the projection onto the first vertex $e:G_n\to G_0$.
The positive faces and the degeneracies preserve this projection, yielding linear maps between the microbundle tubular nieghborhoods $(\nu(G_n,G_0),G_0,\pi)$. We inductively establish local coordinates around $G_0$ across all simplicial degrees.

\begin{theorem-1}
Every simplicial manifold $(G_n,d_i,s_j)$ admits a frame $\phi=(\phi_n)_{n\geq1}$, namely a family of microbundle tubular neighborhoods  $\phi_n:(\nu(G_n,G_0),G_0,\pi)\to(G_n,G_0,e)$
compatible with the positive faces and the degeneracies,
$d_i\phi_n=\phi_{n-1}d_i$ for $i>0$, and $s_j\phi_n=\phi_{n+1}s_j$.
\end{theorem-1}

\smallskip

In the second step, we use a frame to compare smooth functions on $G_n$ with polynomial functions on the fibers of the normal bundle, work with simplicial cochains $C(G)$, and naturally arrive at our definition of differentiation. Inside the normalized cochain algebra $C_N(G)$, we identify an ideal $J$ encoding higher-order vanishing along the degeneracy locus, 
the information that disappears under differentiation. We call its closure $\hat J=J+\delta(J)$ the \emph{differentiating ideal} (Definition \ref{def:diffideal}). The quotient is shown to be semi-free and therefore to define a higher Lie algebroid structure. 

\begin{theorem-2}
Let $G$ be a simplicial manifold, $A_G$ its tangent complex, and $\J\subset C_N(G)$ its differentiating ideal.
 Then $C_N(G)/\J$ is a semi-free commutative differential graded algebra with indecomposables $\Gamma(A^*_G)$, defining a higher Lie algebroid structure on $A_G$. A frame $\phi$ for $G$ yields a splitting $\phi_\star:S(\Gamma A^*)\overset{\sim}{\to} 
C_N(G)/\J$.
\end{theorem-2}

The quotient dga $C_N(G)/\J$ is called the Chevalley-Eilenberg algebra of the resulting higher algebroid, and denoted $CE(A_G)$.
A straightforward extension of the previous result allows us to differentiate forms instead of functions, by working with the  Bott--Shulman complex of a simplicial manifold, and obtaining the Weil algebra of the associated higher Lie algebroid at the infinitesimal level, see Corollary \ref{cor:thm2forms}. 

Third, we establish a van Est isomorphism theorem for higher Lie groupoids. Our approach comes with a natural van Est map relating global and infinitesimal cochains, which is simply given by the quotient map
$$
\ve:C_N(G)\to CE(A_G)=C_N(G)/\hat J,
$$
and which restricts to the classical one for Lie groups and groupoids (see Proposition \ref{prop:gdcomparison}). In those cases, the classical van Est theorem (\cite{vanEst, crainic}), asserts that $\ve$ induces an isomorphism in cohomology in a certain range and under suitable topological assumptions. Within our framework, we introduce and discuss a notion of $n$-connectivity for simplicial manifolds (Definition \ref{def:connected}), expressed in terms of the fibers of the face maps, and we achieve a geometric proof for a higher van Est theorem, built using Illusie's Decalage (\cite{illusie}). 

\begin{theorem-3}
Let $G$ be a higher Lie groupoid which is $n$-connected. Then, the van Est map induces isomorphism in cohomology $H^k(G)\to H^k(A_G)$ for every $k \leq n$.
\end{theorem-3}

\noindent A version of this theorem restricted to certain minimal augmented dga's appeared in \cite[Thm. 8.1'(v)]{Sullivan} and, in the context of derived stacks, in \cite{Nui-thesis}. The above yields a complete,  concrete and strict generalization of van Est's isomorphism to the higher context, allowing to integrate infinitesimal cocycles under the suitable connectivity assumptions. As an application, our concrete treatment allows us to provide a full Lie theoretic characterization of shifted symplectic structures on higher groupoids (Corollary \ref{cor:shiftedsympl}).

The last section of the paper identifies the algebraic mechanism underlying the differentiation of simplicial manifolds and higher Lie groupoids. We show that differential graded algebras arise as the infinitesimal objects inside the category of cosimplicial algebras, and that differentiation can be characterized as the left adjoint to monoidal denormalization. 
We say that a cosimplicial algebra $X$ is \emph{infinitesimal} if the simplicial isomorphism $X\cong (X_N)_K$ preserves the algebra structure. We find a canonical obstruction abstracting higher vanishing, namely the products $\mu(\mathcal J_X)$ of overlapping tensors $\mathcal J_X\subset (X\otimes X)_N$, which span the differentiating ideal $\hat J_X$ studied before. By quotienting by this ideal, one obtains a functorial and computable differentiation procedure.

\begin{theorem-4}
The abstract differentiation $N'(X)=X_N/\J_X$
is left adjoint to the denormalization functor, making differential graded algebras a reflective subcategory of cosimplicial algebras.
$$N':c\cat{Alg}\leftrightarrows dg\cat{Alg}:K, \qquad N'\dashv K.$$
Moreover, $N'$ preserves commutativity, and for a simplicial manifold $G$ it recovers the geometric differentiation as $N'(C(G))\cong CE(A_G)$.
\end{theorem-4}

This generalizes naturally to super algebras, and both explains and unifies earlier constructions, recovering the Chevalley-Eilenberg and Weil algebras uniformly. It can also be seen as a generalization of the main result on Beilinson's small algebras, see \cite[\S 7]{burgos}.  
Building on our Theorem, we show in Section \ref{subsec:severa} that our concrete construction defines a differential graded manifold solving Severa's representability problem \cite{Severa}. The paper \cite{LRWZ} previously obtained the underlying graded manifold by different methods.
Additionally, this theorem shows that the idea of modding out by higher vanishing recovers the construction already proposed by Pridham in derived geometry \cite{Pri10,Pri20} and further developed by Rogers in the non-derived context. 

\medskip


{\bf Outlook and future directions.}
Our work has the potential to unlock several future investigations. We now list some of the future goals that we plan to explore elsewhere, as well as general open questions related to the project.

\noindent\emph{- Double structures}: The differentiation theory developed in this paper admits natural extensions beyond the simplicial setting, in particular, to bisimplicial manifolds $G_{\bullet,\bullet}$. In such a case, our differentiation can be applied first vertically and then horizontally $A^hA^v(G)$, viceversa $A^vA^h(G)$ or all at once $A(Tot(G))$ after taking the total simplicial structure given by the Artin-Mazur codiagonal $Tot(G)$ \cite{artin-mazur,cegarra} (see also \cite{mt11}). Our concrete approach to differentiation allows for a direct comparison and the expected result is that the three induce the same infinitesimal object, in the sense that the total (or \emph{Weil} \cite{mepi21,vor02}) algebras $Tot(A^hA^v(G))\simeq Tot(A^vA^h(G))$ associated with the doubles are naturally isomorphic to $A(Tot(G))$.
This perspective offers a conceptual explanation for Weil-type constructions and for their functoriality \cite{angulo-cueca,bcdh,mepi21}, and suggests a general framework for multi-directional differentiation. For example, the nerve of the pair groupoid of a simplicial manifold $G$ should have total second order differentiation being the Weil algebra $W(A_G)$. A detailed account of iterated differentiation and its geometric consequences will be developed elsewhere.

\noindent\emph{- Lie theory for shifted geometric structures}: Extending Corollary \ref{cor:shiftedsympl} below for shifted symplectic structures, we plan to elucidate the differentiation and integration of shifted Poisson structures and other geometries (see a general review in \cite{cumava}). We think that our approach to differentiation can provide explicit and computable descriptions of structures that are often presented in a highly abstract homotopical language. Other applications of the van Est isomorphism theorem will also be explored, including higher rigidity problems.

\noindent\emph{- Integration problems}: 
Finite-dimensional integration theory is developed in some special cases, such as for Lie 1-algebroids \cite{cf}, the cases over a point \cite{getzler09,henriques,rw,Rog25}, and for local integrations \cite{sesi}. Our differentiation theory allows for a systematic exploration of the problem of finite-dimensional integrations of general higher Lie algebroids. Of particular interest are integrations of Courant algebroids (\cite{L-BS,mt11,SZ17}), singular foliations, representations up to homotopy (\cite{ac,gsm,dht}) and other examples arising in mathematical-physics.

Finally, another line in which we plan to apply our framework is that of higher stacks, where it can allow for fine, strict results towards the understanding of higher differentiable stacks, especially for moduli spaces solving problems of classifications up to equivalences.

\smallskip

{\bf Recent related work.}
During the final stages of the elaboration of this paper, the preprint \cite{Rog25} appeared, developing an alternative yet related approach to differentiation, specialized to the particular case of $\infty$-groups. Among the relations, the study of frames is related to certain PBW bases in the context of formal neighborhoods \cite[Thm. 2.49]{Rog25}, and an alternative description of the left adjoint to denormalization in the commutative setting appears in \cite[Thm. 3.9]{Rog25}. 

\smallskip


{\bf Organization.}
The paper is organized as follows. Section 2 revolves around frames, which are systems of microbundle tubular neighborhoods for simplicial manifolds, and proves the frame theorem \ref{thm:1}. Section 3 is the main one, it develops the local study around the units and concludes with the main differentiation theorem \ref{thm:2}. Section 4 establishes the van Est isomorphism theorem \ref{thm:3}. The final section 5 studies the abstract differentiation theorem \ref{thm:4} and explains the relation with other approaches in the literature.


\bigskip

{\bf Acknowledgments.} The authors would like to thank H. Bursztyn, M. Cueca, R.L. Fernandes, E. Meinrenken, C. Rogers, P. Severa and C. Zhu for useful discussions and exchanges. They are particularly grateful to C. Rogers and C. Zhu for discussions of related literature and their feedback on the first version.
A.C.’s research was partially supported by the grants CNPq
PQ 309847/2021-4, CNPq Universal 402320/2023-9 and
FAPERJ CNE E-26/204.097/2024.
M.dH. was partially supported by the grants CNPq
PQ 310289/2020-3 and CNPq Universal 402320/2023-9.


\section{Frames on simplicial manifolds}

In this section, using the language of Milnor microbundles, we develop a local normal form for simplicial manifolds, showing in Theorem \ref{thm:1} that we can build a system of tubular neighborhoods on which we simultaneously linearize the degeneracy and face maps except for $d_0$.



\subsection{From Lie groupoids to simplicial manifolds}


This subsection sets our notations and conventions, presents simplicial manifolds as our main object of study, and reviews  higher Lie groupoids via the smooth horn-filling condition.


\begin{remark}
As usual, $\Delta$ is the category of finite ordinals $[n]=\{0,1,\dots,n\}$ and non-decreasing order maps $\theta:[m]\to[n]$. We write $\iota=\iota_n:[n]\to[n]$ for the identity, $\delta_i:[n-1]\to[n]$ for the injection missing $i$ in the target and $\sigma_j:[n+1]\to[n]$ for the surjection repeating $j, \ j+1$ from the domain. 
A simplicial object in a category $\C$ is a contravariant functor $X:\Delta^\circ\to\C$, or equivalently, a system $(X_n,d_i,s_j)$ where $X_n$, $n\geq0$, are objects in $\C$, equipped with faces $d_i=\delta_i^*:X_n\to X_{n-1}$ and degeneracies $s_j=\sigma_j^*:X_n\to X_{n+1}$, $0\leq i,j\leq n$, satisfying the simplicial identities.
\[ d_jd_i=d_id_{j+1}, s_is_j=s_{j+1}s_i, \ i\leq j; d_is_j=s_{j-1}d_i, i<j; d_js_j=d_{j+1}s_j=1; d_is_j=s_jd_{i-1}\text{ otherwise.}  \] 
\end{remark}


\begin{definition}
A {\bf simplicial manifold} is a contravariant functor $G:\Delta^\circ\to \text{Man}$, or equivalently, a system $(G_n,d_i,s_j)$ where $G_n$ are manifolds, $d_i=G_n\to G_{n-1}$ are face maps and $s_j=G_n\to G_{n+1}$ degeneracy maps satisfying the simplicial identities. 
The {\bf (augmented) tangent complex} $(A_G,\partial)$ is the chain complex corresponding to $TG|_{G_0}$ under Dold-Kan:
$$A_n=
\bigcap_{0<i\leq n}\ker\big( (d_i)_*:TG_n\to TG_{n-1}\big)|_{G_0},\ n\geq 1 \qquad A_0=TG_0 \qquad  
\partial=(d_0)_*:A_n\to A_{n-1}$$ 
\end{definition}


Lie groupoids and Lie algebroids are fundamental objects in modern differential geometry. Lie groupoids are internal groupoids in the category of manifolds, they provide useful tools to describe local and global symmetries, and they are the global counterpart of Lie algebroids, which are central objects in the theory of foliations, Poisson geometry, and mathematical physics. Lie groupoids are our main motivating example of simplicial manifolds via the nerve functor, see e.g. \cite{bdh,survey}.

\begin{example}
If $H\toto M$ is a Lie groupoid, its {\bf nerve} $G=Ner(H)$ is the simplicial manifold with $G_0=M$, $G_1=H$, and $G_n=H\times_M\dots\times_MH$ are chains of $n$ composable arrows in $H$. If 
$$g=(x_n\xfrom{g_n} x_{n-1}\xfrom{g_{n-1}}\cdots \xfrom{g_1}x_0)$$
is an $n$-simplex in $G$, the face $d_i$ deletes $x_i$ and either composes the adjacent arrows or disregards the extreme one, while the degeneracy $s_j$ repeats $x_j$ by inserting an identity.
\end{example}


If $X$ is a simplicial set, $x\in X_n$ is degenerate if it is in the image of some $s_j$. Any simplex can be uniquely written as $x=\theta^*(y)$ with $\theta:[n]\to[k]$ surjective and $y\in X_k$ non-degenerate. The {\bf totally degenerated} $n$-simplices are $\pi^*(X_0)\subset X_n$, where $\pi:[n]\to[0]$, and give a canonical way to see $X_0$ as a subset of $X_n$.
We write $S_{n,j}=s_{j-1}(X_{n-1})\subset X_n$ and refer to $S_n=\bigcup_j S_{n,j}\subset X_n$ as the {\bf degeneracy locus}. If $G$ is a simplicial manifold, then each $S_{n,j}\subset G_n$ is an embedded submanifold, since $s_{j-1}:G_{n-1}\to G_n$ has $d_{j}$ as a left-inverse. Our first main theorem will show that, at least near $G_0$, the submanifolds $S_{n,j}$ intersect cleanly.


\begin{remark}
The $n$-simplex $\Delta^n$ is the simplicial set represented by $[n]$, so $\Delta^n_k=\{[k]\to[n]\}$. If $X$ is a simplicial set, then Yoneda sets natural identifications $Hom(\Delta^n,X)\cong X_n$, $f\mapsto f(\iota)$. The non-degenerate simplices of $\Delta^n$ are the injections $[k]\to[n]$. The boundary $\partial\Delta^n\subset \Delta^n$ is spanned by all the faces $\delta_i=d_i(\iota)\in \Delta^n_{n-1}$, and the $(n,k)$-horn $\Lambda^n_k\subset \partial\Delta^n$ is spanned by the faces $\delta_i$ except $\delta_k$.
\end{remark}

The modern approach to higher category theory replaces the existence of higher compositions by a horn-filling condition: a collection of simplices is composable if they are all but one face of a higher simplex, and the remaining face is a composition of them. Within the smooth setting, this gives the following working definition.

\begin{definition}
A {\bf higher Lie groupoid} is a simplicial manifold $(G,d_i,s_j)$ satisfying the following two conditions:
\begin{enumerate}[HG1)]
    \item The horn-space $G_{n,k}=Hom(\Lambda^n_k,G)=\{(g_i,i):d_i(g_j)=d_{j-1}(g_i) \ \forall i<j\}\subset\prod_{i\neq k} G_{n-1}$ is an embedded submanifold, and 
    \item the horn map $d_{n,k}=\prod_{i\neq k} d_i:G_n\to G_{n,k}$ is a surjective submersion for every $n,i$.
\end{enumerate}
A higher Lie groupoid is a {\bf Lie $m$-groupoid} if $d_{n,k}$ is 1-1 for every $n>m$.
\end{definition}

\begin{example}
Constant simplicial manifolds are the same as Lie 0-groupoids, while Lie 1-groupoids are exactly the same as nerves of Lie groupoids, according to the classical definition, see e.g. \cite{mm-book}. Lie 2-groupoids, as defined above, are a weaker version of the strict Lie 2-groupoids studied for instance, in \cite{dhs}. Other relevant examples of higher Lie groupoids arise in the theory of gerbes, of representations up to homotopy, and of shifted symplectic and Poisson structures (see \cite{cumava} for an overview).
\end{example}



\begin{remark}
Lie groupoids up to Morita equivalence serve as models for differential stacks, solutions to moduli problems of classifying objects up to isomorphisms. Generalizing this, higher Lie groupoids up to hypercovers model higher differentiable stacks (see \cite{getzler}),  which play a key role in the emerging derived differential geometry (see \cite{Nui-thesis,PTVV13,Pri20} and references therein), and appear naturally as moduli spaces for classifications up to equivalence. 
\end{remark}




\subsection{Microbundles, tubular neighborhoods and frames}


We introduce here the notion of a frame of a simplicial manifold as a system of compatible tubular neighborhoods, within the language of Milnor's microbundles.


\begin{remark}[Dold-Kan I]\label{rmk:dk1}
Fix $M$ a manifold. A simplicial vector bundle $(V\to M,d_i,s_j)$ yields a {\bf normalized complex} $(V_N,\partial)$, given by $(V_N)_n=\ker(d_{n,0}:V_n\to V_{n,0})=\bigcap_{i>0} \ker (d_i:V_n\to V_{n-1})$ and $\partial =d_0\vert_{(V_N)_n}$.
By the Dold-Kan Theorem, $N$ establishes an equivalence of categories between simplicial vector bundles and connective (i.e. non-negatively graded) chain complexes. The smooth vector bundle version is discussed, for instance, in \cite{dht}. 
$$N:sVB(M)\to Ch_{\geq0}(VB(M))$$
A key remark is that every simplicial vector bundle is a higher groupoid, namely $V_{n,k}\subset \prod_{i\neq k}V_{n-1}$ is a well-defined sub-bundle and $V_n\to V_{n,k}$ is an epimorphism.
\end{remark}

\begin{example}\label{ex:normal}
Starting with $G$ a simplicial manifold,  there is a short exact sequence of simplicial vector bundles over $G_0$, where $TG_0$ is regarded as a constant simplicial object:
$$0\to TG_0 \to TG\vert_{G_0}\to \nu(G,G_0)\to 0$$
The normalization $N(\nu(G,G_0))$ is the tangent complex of $G$, while $N(TG\vert_{G_0})$ is the augmented tangent complex. 
If $G$ is the nerve of a Lie groupoid, then $A_1=A(G)$ is the vector bundle of the underlying Lie algebroid and $\partial:A_1=A(G)\to A_0=TG_0$ is the anchor map.
\end{example}


Let $G=(G_n,d_i,s_j)$ be a simplicial manifold, embed $G_0\subset G_n$ via the totally degenerated simplices, and write $G_{n,k}\subset \prod_{i\neq k} G_{n-1}$ for the horn spaces, which are just subspaces in general, and $d_{n,k}:G_n\to G_{n,k}$ for the horn maps. 
We are interested in the geometry of each $G_n$ around $G_0$, so we shall work with germs $(M,G_0)$, where $M$ is a manifold and $G_0\subset M$ is embedded, and germ maps extending the identity $f:(M,G_0)\to(N,G_0)$. 
These germs are used in \cite{L-BS} and references therein with the name of \emph{microfolds}.
We say that such a map is an {\bf epimorphism} if $Df\vert_{G_0}$ is surjective, or equivalently, it can be represented by a surjective submersion, and that it is a {\bf monomorphism} if $Df\vert_{G_0}$ is injective, or equivalently, it can be represented by a closed embedding.
As proven in \cite[Thm 3.8]{dorsch}, the horn-filling conditions defining higher Lie groupoids are always satisfied locally around $G_0$. 
Next, we give a simplified proof of this fact, as a local version of \cite[Prop 6.5]{dhos}.

\begin{proposition}
Given $G$ a simplicial manifold, and given $n,k$, the horn space $(G_{n,k},G_0)$ is a well-defined subgerm of the product $(\prod_{i\neq k}G_{n-1},G_0)$, and the restriction $d_{n,k}:(G_n,G_0)\to(G_{n,k},G_0)$ is a germ epimorphism.
\end{proposition}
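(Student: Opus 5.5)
Throughout, $n\geq1$ is fixed and $G_0\subset G_m$ is embedded by totally degenerate simplices. The plan is a local (germ) version of the argument in \cite[Prop 6.5]{dhos}, by induction on $n$. Everything is reduced to a statement about the tangent simplicial vector bundle $TG|_{G_0}$, where Remark \ref{rmk:dk1} gives horn-filling for free, and then promoted to germs by the constant rank and implicit function theorems.

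\emph{Step 1 (linear level).} The tangent bundles $TG_m|_{G_0}$ form a simplicial vector bundle over $G_0$. By Remark \ref{rmk:dk1}:
\begin{itemize}
\item the linear horn space $V_{n,k}\subset\prod_{i\neq k}TG_{n-1}|_{G_0}$ is a subbundle;
\item the linear horn map $V_n\to V_{n,k}$ is an epimorphism of vector bundles.
\end{itemize}
In particular $V_{n,k}$ has constant rank along $G_0$.

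\emph{Step 2 (the horn space is a subgerm).} $G_{n,k}$ is the set of tuples $(g_i)_{i\neq k}$ satisfying the equations $d_ig_j=d_{j-1}g_i$ for $i<j$, $i,j\neq k$. I will write it as the preimage of a diagonal under a map $\Phi:\prod_{i\neq k}G_{n-1}\to\prod_{i<j}G_{n-2}$. The clean way is to mimic the horn filtration:
\begin{itemize}
\item Build $G_{n,k}$ iteratively, adding one face at a time in the order used to show horns are iterated pushouts of lower-dimensional horns.
\item At each stage the new constraint says a face of the new simplex matches a given $(n-2)$-simplex. So the new stage is a fibre product of the previous stage with a map of the form $d_{n-1,k'}$ over a lower horn space $G_{n-1,k'}$.
\item By induction on $n$, $d_{n-1,k'}$ is a germ epimorphism onto a subgerm, i.e.\ a submersion near $G_0$. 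Fibre products along submersions are embedded submanifolds near $G_0$.
\item Transversality near $G_0$ holds because the base points are totally degenerate, so all constraints pass through $G_0$.
\end{itemize}
The base case $n=1$ is trivial, since there $G_{1,k}=G_0$. Alternatively, I could compute $\operatorname{rank} D\Phi$ along $G_0$ via Step 1 and use lower semicontinuity plus the constant rank theorem, though constancy of rank off $G_0$ is not automatic. For this reason I prefer the iterated fibre product argument.

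\emph{Step 3 (the horn map is an epimorphism).} With $(G_{n,k},G_0)$ a subgerm, its tangent space along $G_0$ is exactly $V_{n,k}$. This holds because the tangent of a fibre product along submersions is the fibre product of tangents, which matches the linear horn construction. Then $D d_{n,k}|_{G_0}:V_n\to V_{n,k}$ is the linear horn map, which is surjective by Step 1. So $d_{n,k}$ is a submersion near $G_0$, hence a germ epimorphism.

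\emph{Expected obstacle.} The main difficulty is Step 2: ordering the faces so that each added constraint is a fibre product along a horn map of lower degree, and checking that the ranks match. I would follow the standard decomposition of $\Lambda^n_k$ into pushouts of $\Lambda^{n-1}_{k'}\subset\Delta^{n-1}$, adding faces $\delta_i$ in increasing distance from $k$.
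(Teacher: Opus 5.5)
\paragraph{Gap in Step 2.} Your overall strategy matches the paper's: induction on $n$, writing the horn space as an iterated base change along lower-dimensional horn maps, then checking surjectivity of $Dd_{n,k}$ on a linearization. You use $TG|_{G_0}$ where the paper uses $\nu(G,G_0)$, which makes no difference. The problem is the decomposition you propose in Step 2: it does not exist. Suppose you build $\Lambda^n_k$ by adding the faces $\delta_i$, $i\neq k$, one at a time. The $r$-th face $\delta_{i_r}$ meets the earlier ones in $\bigcup_{s<r}(\delta_{i_r}\cap\delta_{i_s})$, a union of $r-1$ of the $n$ facets of $\delta_{i_r}\cong\Delta^{n-1}$. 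Whatever order you choose, this is a horn $\Lambda^{n-1}_{k'}$ only for the last face, $r=n$.
\begin{itemize}
\item For $r=2$ it is a single facet. That step is a base change of a face map $d_j\colon G_{n-1}\to G_{n-2}$, which is harmless because a degeneracy is a section of it through $G_0$.
\item For $3\le r\le n-1$, which occurs as soon as $n\geq 4$, the step is a base change of a restriction $G_{n-1}\to G^{U}$, where $U$ is a union of more than one but fewer than $n-1$ facets. For $n=4$, $r=3$ this map is $G_3\to G_2\times_{G_1}G_2$.
\end{itemize}
Such a map is not a horn map, so your inductive hypothesis says nothing about $G^{U}$ being a smooth germ or the map being submersive near $G_0$. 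Your claim that each new stage is a fibre product along some $d_{n-1,k'}$ over $G_{n-1,k'}$ therefore fails, exactly at the point you flagged as the expected obstacle.

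\paragraph{The fix.} Use the paper's finer filtration $\ast=F_0\subset F_1\subset\dots\subset F_{a-1}=\Lambda^n_k$, built by elementary expansions $F_r=F_{r-1}\cup_{\Lambda^{n_r}_{k_r}}\Delta^{n_r}$ along horns of \emph{every} dimension $n_r<n$, not only $n-1$. One concrete choice:
\begin{itemize}
\item Start at the vertex $k$.
\item Adjoin the simplices $\sigma\cup\{k\}$, for $\sigma$ a nonempty proper subset of $[n]\setminus\{k\}$, in order of increasing $|\sigma|$.
\item Each one meets the previous stage exactly in its horn at $k$, of dimension $|\sigma|\le n-1$, and the union of all of them is $\Lambda^n_k$.
\end{itemize}
Then every step $G^{F_r}\to G^{F_{r-1}}$ is a base change of a germ epimorphism $G_{n_r}\to G_{n_r,k_r}$ covered by the induction, so every $G^{F_r}$ is a smooth germ. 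Inductively $T(G^{F_r})|_{G_0}=(TG|_{G_0})^{F_r}$, which is what your Step 3 needs in order to identify $TG_{n,k}|_{G_0}$ with $V_{n,k}$. Alternatively, you could strengthen the induction to cover restrictions to unions of some but not all facets, but those must again be handled through such a filtration. With this repair, Steps 1 and 3 go through as you wrote them.
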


\begin{proof}
We can argue inductively in $n$. The statement is clear for $n=1$, since $G_{1,k}\cong G_0$ and $d_k=d_{1,k}:G_1\to G_0$ has maximal rank around $G_0$, for they are retractions for the inclusion.

Assume now that the claim is true for every $m,k$, $n>m$. Consider a decreasing filtration
$$\Delta^n=F_a\supset \Lambda^n_k=F_{a-1}\supset\dots\supset F_r\supset\dots\supset F_0=\ast$$ 
on which each step is an elementary collapse $F_r=F_{r-1}\cup_{\partial\Lambda^{n_r}_{k_r}}\Delta^{n_r}$, $n_r<n$.
This leads to a tower of germ restriction maps
$$(G_n,G_0) \to (G_{n,k},G_0)\to \dots\to 
(G^{F_r},G_0)\to \dots\to (G_0,G_0)$$
where the $r$-th step $(G^{F_r},G_0)\to(G^{F_{r-1}},G_0)$, $r<a$, is the base-change of the surjective submersion $(G_{n_r},G_0)\to (G^{\Lambda^{n_r}_{k_r}},G_0)$:
$$\xymatrix{ 
(G^{F_r},G_0) \ar[r] \ar[d] & 
(G^{F_{r-1}},G_0) \ar[d] \\
(G_{n_r},G_0) \ar[r] & (G^{\Lambda^{n_r}_{k_r}},G_0)}$$
Every surjective submersive germ map can be represented by a surjective submersion. Then, by the standard transversality criterion, we have that $(G^{F_r},G_0)$ is a smooth well-defined germ and $(G^{F_r},G_0)\to(G^{F_{r-1}},G_0)$ is surjective submersive, in particular $(G^{F_{a-1}},G_0)=(G_{n,k},G_0)$. It only remains to show that $(G_n,G_0)\to (G_{n,k},G_0)$ is a germ epimorphism. For this we observe that, at the linear level of normal bundles, this property holds since $\nu(G,G_0)$ is a simplicial vector bundle and hence satisfies the horn filling conditions (see Remark \ref{rmk:dk1}). Together with the fact that $\nu(G_{n,k},G_0)\simeq \nu(G_n,G_0)_{n,k}$, the result easily follows.
\end{proof}



Given $(M,S)$ a germ, by a
{\bf germ tubular neighborhood} we mean a 
germ isomorphism 
$\phi:(\nu(M,S),S)\to(M,S)$
such that $\nu(\phi)=\id_{\nu(M,S)}$, where we are identifying 
$E\cong \nu(E,S)$, $v\mapsto \odv{}{t}_{t=0} t.v$ for a vector bundle $E\to S$. 
Starting with a simplicial manifold $G$, a {\bf simplicial tubular neighborhood} 
may be defined as a system of germ tubular neighborhoods which are compatible with all the faces and the degeneracies:
$$(\phi_n:(\nu(G_n,G_0),G_0)\to (G_n,G_0))_n \qquad \phi d_i=d_i\phi \quad \phi s_j=s_j\phi,$$
where the structure maps on $\nu(G_n,G_0)$ are given by the normal derivatives of those of $G$ (see Ex. \ref{ex:normal}).
Such a family rarely exists since the simplicial normal bundle $\nu(G,G_0)$ is completely encoded in the tangent complex $(A_G,\partial)$, while the germ $(G,G_0)$ may include non-linear information. In the Lie groupoid case, a simplicial tubular neighborhood would force the orbits to be $0$-dimensional and the Lie algebroid bracket to be trivial. 
Next, we establish weaker versions of simplicial tubular neighborhoods, within the language of microbundles, see e.g. \cite{milnor}.

\begin{definition}
A {\bf microbundle} $(M,S,r)$ is a germ $(M,S)$ together with a germ projection $r:(M,S)\to (S,S)$ extending $\id_S$. 
A microbundle map $f:(M,S,r)\to (M',S',r')$ is a germ map compatible with the projections, namely $r'f=f r$. 
\end{definition}



\begin{example}
Every germ $(M,S)$ admits a germ tubular neighborhood, and can therefore be upgraded to a microbundle, though in a non-canonical way.  Moreover, given $(M,S,r)$ a microbundle, by standard arguments, we can always build a {\bf microbundle tubular neighborhood}, namely one compatible with the projections 
$\phi:(\nu(M,S),S,\pi)\to (M,S,r)$.
\end{example}

Given $G$ a simplicial manifold, the $0$-vertex map $e:G_n \to G_0$ defines a natural microbundle $(G_n,G_0,e)$
which, via a tubular neighborhood, is isomorphic to the normal bundle
$$ (\nu(G_n,G_0),G_0,\pi) \simeq(G_n,G_0,e) .$$
Every \emph{positive face} and every degeneracy can be regarded as a microbundle map. This is not the case with the 0-face, for this does not preserve the zero vertex in general.  

\begin{definition}\label{def:frame}
Given $G$ a simplicial manifold, we define a {\bf frame} $\phi=(\phi_n)_{n\geq1}$ as a family of microbundle tubular neighborhoods 
$\phi_n:(\nu(G_n,G_0),G_0,\pi)\to(G_n,G_0,e)$
compatible with the positive faces and the degeneracies, namely
$d_i\phi_n=\phi_{n-1}d_i$ for $i>0$, and $s_j\phi_n=\phi_{n+1}s_j$.    
\end{definition}

A frame allows us to take all the structure maps $d_i$ and $s_j$ to a normal form, except for $d_0$. We will show that every simplicial manifold $G$ admits a frame, and therefore, to understand $G$ near the units, besides the tangent complex $A_G$, the only extra data we need to take care of is the germ of the 0th face map $d_0$. 

\begin{example}[Frames on a Lie groupoid]
A frame on a Lie groupoid $G\toto M$, in the terminology of \cite{cms}, is equivalent to the germ of a tubular structure.
\end{example}


\subsection{Constructing frames}

In this subsection, we finally show our first main result, Theorem \ref{thm:1} below, stating that any simplicial manifold does admit a frame. This will be fundamental for the  subsequent study of geometric differentiation.


Fix $G$ a simplicial manifold. Recall that we regard the horn space as a topological subspace $\iota=\prod_{i>0} d_i:G_{n,0}\subset \prod_{i>0}G_{n-1}$, that its germ around $G_0$ is always that of an embedded submanifold, and that this is indeed an embedded submanifold if $G$ is a higher Lie groupoid. 
We call
$d_{n,0}(S_{n,j})\subset G_{n,0}$ the {\bf projected degeneracies}, which is a subspace of $G_{n,0}$:

$$\xymatrix{ & S_{n,j}\subset G_n \ar[rd]^{\prod_{i>0}d_i} \ar[d]^{d_{n,0}} & \\ G_{n-1}\ar[ru]^{s_{j-1}} \ar[r] & d_{n,0}(S_{n,j})\subset G_{n,0} \ar[r]^{\iota} & \prod_{i>0} G_{n-1}}$$

\begin{proposition}\label{prop:proj-deg}
Given $G$ a simplicial manifold, the projected degeneracies $d_{n,0}(S_{n,j})$ form an embedded submanifold of the product $\prod_{i>0}G_{n-1}$, and $d_{n,0}:S_{n,j}\to d_{n,0}(S_{n,j})$ is a diffeomorphism.
\end{proposition}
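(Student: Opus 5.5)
The plan is to show that the composite $\iota\circ d_{n,0}\circ s_{j-1}:G_{n-1}\to\prod_{i>0}G_{n-1}$ is a closed embedding. Its image is exactly $d_{n,0}(S_{n,j})$, and $s_{j-1}:G_{n-1}\to S_{n,j}$ is already a diffeomorphism onto the embedded submanifold $S_{n,j}$. Both claims then follow: the projected degeneracy is an embedded submanifold, and $d_{n,0}|_{S_{n,j}}$ is the composite of diffeomorphisms $(\iota d_{n,0}s_{j-1})\circ s_{j-1}^{-1}$. The key observation is that the map $G_{n-1}\to\prod_{i>0}G_{n-1}$, $g\mapsto (d_is_{j-1}g)_{i>0}$, has one coordinate that is a left inverse.

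\emph{Step 1 (the retraction).} For $1\le j\le n$, consider the component $i=j$, which is positive. The simplicial identity $d_js_{j-1}=1$ gives $\mathrm{pr}_j\circ\iota\circ d_{n,0}\circ s_{j-1}=\id_{G_{n-1}}$, where $\mathrm{pr}_j$ is the projection to the $j$-th factor. Hence $f:=\iota d_{n,0}s_{j-1}$ is a smooth map with a smooth left inverse. Consequently $f$ is an injective immersion, since $D\mathrm{pr}_j\circ Df=\id$, and it is a homeomorphism onto its image, with inverse the restriction of the continuous map $\mathrm{pr}_j$.

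\emph{Step 2 (closedness of the image).} Write the image as
$$\{x\in\textstyle\prod_{i>0}G_{n-1}: x=f(\mathrm{pr}_j x)\},$$
which is the preimage of the diagonal under $(\id,f\circ\mathrm{pr}_j)$. Because manifolds are Hausdorff, the image is closed. An injective immersion that is a homeomorphism onto its image is an embedding. Therefore $d_{n,0}(S_{n,j})=f(G_{n-1})$ is a closed embedded submanifold of $\prod_{i>0}G_{n-1}$, and it lies inside $\iota(G_{n,0})$.

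\emph{Step 3 (the diffeomorphism).} The map $d_{n,0}|_{S_{n,j}}$ equals $f\circ s_{j-1}^{-1}$. Here $s_{j-1}^{-1}=d_j|_{S_{n,j}}$ is a diffeomorphism $S_{n,j}\to G_{n-1}$, because $S_{n,j}$ is embedded with $d_j$ as an inverse. Its inverse is $s_{j-1}\circ\mathrm{pr}_j$, which is smooth.

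The only delicate point is formal. $G_{n,0}$ need not be a manifold, so the smooth structure on $d_{n,0}(S_{n,j})$ must be taken from the ambient product, which is how the statement is phrased. The remaining point to verify is that the chosen index satisfies $j>0$: this holds since $S_{n,j}=s_{j-1}(G_{n-1})$ is defined for $1\le j\le n$. I do not expect any genuine obstacle beyond this index bookkeeping.
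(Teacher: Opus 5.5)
Your proof is correct and follows essentially the same route as the paper: the $j$-th component $d_js_{j-1}=\id$ gives a smooth left inverse, so $\iota d_{n,0}s_{j-1}$ is an embedding, and precomposing with $s_{j-1}^{-1}=d_j|_{S_{n,j}}$ yields the diffeomorphism. Compared with the paper, you add the closedness of the image (as the equalizer of $\id$ and $f\circ\mathrm{pr}_j$) and the explicit inverse $s_{j-1}\circ\mathrm{pr}_j$; the paper instead writes out equations cutting out the image, which are the componentwise form of your condition $x=f(\mathrm{pr}_j x)$.
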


\begin{proof}
The composition $(\prod_{i>0}d_i)s_{j-1}:G_{n-1}\to\prod_{i>0}G_{n-1}$ is an embedding, for its $j$-th component is an identity by the simplicial identities, and therefore it admits a left inverse. Since $s_{j-1}:G_{n-1}\to G_n$ is also a closed embedding, so does the restriction $\prod_{i>0}d_i:S_{n,j} \to \prod_{i>0}G_{n-1}$. 
Note that $(g_1,\dots,g_n)\in \prod_{i>0}G_{n-1}$ is in $d_{n,0}(S_{n,j})=(\prod_{i>0}d_i)(S_{n,j})$ if and only if it satisfies the following equations:
$$g_i=\begin{cases} s_{j-2}d_i(g_j)  & i<j-1\\
   g_j & i=j-1\\
   s_{j-1}d_i(g_j) &  i>j\end{cases}$$ 
\end{proof}


\begin{remark}[Dold-Kan II]\label{rmk:DK2}
Continuing \ref{rmk:dk1}, the {\bf denormalization} is the inverse to $N$,
$$K:Ch_{\geq0}(VB(M))\to sVB(M)$$
it associates to a connective chain complex $(E,\partial)$ on $VB(M)$ a simplicial vector bundle $E_K=((E_K)_n,d_i,s_j)$. Each $(E_K)_n$ is usually described as a sum over surjective order maps $[n]\to[k]$. We use subsets $\alpha\subset\{1,\dots,n\}$ instead, in the spirit of \cite{cc}. Write $e^n_1,\dots, e^n_n$ for the canonical basis in $\Z^n$, and $e^n_\alpha=e^n_{\alpha_1}\wedge\dots\wedge e^n_{\alpha_k}\in \Lambda^k(\Z^n)$ for ordered indices $\alpha_i < \alpha_{i+1}$ in $\alpha=\{\alpha_1,\dots \alpha_k\}$. Then

\begin{enumerate}[a)]
    \item $(E_K)_n=\bigoplus_{k=0}^n E_k\otimes\Lambda^k(\Z^n)$.  The space $(E_K)_n$ is spanned by tensors $v\otimes e^n_\alpha$, $v\in E_k$, $|\alpha|=k$;
    \item $s_j:(E_K)_n\to (E_K)_{n+1}$ is given by $s_j(v\otimes e^n_\alpha)=v\otimes e^{n+1}_{\delta_{j+1}(\alpha)}$, where $\delta_{j+1}:[n]\to[n+1]$;
    \item $d_i:(E_K)_n\to (E_K)_{n-1}$, with 
    $$d_i(v\otimes e^n_\alpha)=\begin{cases}
        v\otimes e^{n-1}_{\sigma_i(\alpha)} & i\neq \alpha \text{ or } i+1 \neq \alpha
         \\
        \partial(v)\otimes e^{n-1}_{\sigma_0(\alpha)\setminus\{0\}} & \text{$i=0$ and $1\in \alpha$ }\\
        0 & \text{otherwise}
        \end{cases}$$
\end{enumerate}
Note that $|\sigma_i(\alpha)|=|\alpha|-1$ if and only if  $i,i+1\in\alpha$. 
There is a correspondence between surjective maps $\tilde \alpha:[n]\to[k]$ and subsets $\alpha\subset\{1,\dots,n\}$ given by  $\tilde\alpha(t)=|\{1,\dots,t\}\cap \alpha|$, under which the above formulas correspond to the usual ones in the literature, see e.g. \cite[1.2.3.5]{Lurie}. 
\end{remark}

Let $V$ be a simplicial vector bundle, write $E=V_N$ for its normalization, and identify $V=K(E)$. The degeneracies of $V$ are given by $S_{n,j}=s_{j-1}(E_{n-1})=\bigoplus_{j\notin \alpha,\ |\alpha|=k} E_k\otimes e^n_\alpha\subset\bigoplus_{\alpha, |\alpha|=k}E_k\otimes e^n_\alpha$. 
Thus, the obvious projections $\pi_j:V_n\to S_{n,j}\subset V_n$ commute, namely $\pi_{j'}\pi_j=\pi_{j}\pi_{j'}$ for every $j,j'$. A key step in the proof of the Theorem will rely on the following extension property for microbundle maps, which we explore independently, for the sake of clarity.


 %
\begin{lemma}\label{lemma:nice-family}
Let $V\to M$ be a vector bundle and $\pi_j:V\to S_{j}\subset V$, $j=1,\dots,n$, commuting projections onto subbundles. If  $g_j:(S_j,M,\pi)\to (W,N,r)$ are microbundle maps that agree on the intersections, then there is a microbundle map $g:(V,M,\pi)\to (W,N,r)$ extending them.
\end{lemma}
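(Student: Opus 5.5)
Since the projections commute, I will use the inclusion–exclusion formula. This is the standard way to glue maps defined on a family of commuting linear retracts. For a subset $I\subset\{1,\dots,n\}$ set $\pi_I=\prod_{j\in I}\pi_j$, which is well defined by commutativity, and $\pi_\emptyset=\id_V$. Each $\pi_I$ for $I\neq\emptyset$ is a projection onto $S_I=\bigcap_{j\in I}S_j$. Indeed, commuting idempotents multiply to an idempotent whose image is the intersection of the images: if $v\in\bigcap S_j$ then $\pi_jv=v$ for all $j$, and conversely $\pi_I v\in S_j$ for each $j\in I$. The first step is therefore to check that $S_I$ is a subbundle and that $\pi_j\pi_I$ maps into $S_{I\cup\{j\}}$. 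Fiberwise the commuting projections are simultaneously diagonalizable, so this is clear pointwise. Constant rank holds because the rank of an idempotent equals its trace, which is continuous and integer valued.

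The key point is that the naive formula $\sum_I(-1)^{|I|+1}g(\pi_I v)$ only makes sense in a vector space target. Here $W$ is only a germ, so I will first linearize the target. Choose a microbundle tubular neighborhood $\psi:(\nu(W,N),N,\pi)\to(W,N,r)$, which exists by the preceding Example. Replacing each $g_j$ by $\psi^{-1}g_j$ reduces the problem to the case where $W=E\to N$ is a vector bundle with projection $\pi$, and the $g_j$ are germs of maps $S_j\to E$ covering a common map $f:M\to N$. These maps are fiber preserving in the sense that $\pi g_j=f\pi$. Then I define
$$g(v)=\sum_{\emptyset\neq I\subset\{1,\dots,n\}}(-1)^{|I|+1}\,g_{\min I}(\pi_I v),$$
where all terms lie in the fiber $E_{f(\pi v)}$, so the sum makes sense. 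Here $\pi_Iv\in S_I\subset S_{\min I}$, and by the compatibility hypothesis any $j\in I$ could be used in place of $\min I$. The result is a smooth germ near the zero section, and it is a microbundle map, since each $\pi_I$ is a bundle map over $\id_M$. Composing with $\psi$ then gives the desired $g$ into $W$.

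The remaining step, and the only real obstacle, is checking that $g$ extends each $g_k$. Take $v\in S_k$, so that $\pi_kv=v$. I pair the subsets $I\not\ni k$ with $I\cup\{k\}$. Then $\pi_{I\cup\{k\}}v=\pi_I\pi_kv=\pi_Iv$, and the two terms carry opposite signs and evaluate the compatible $g$'s on the same point of $S_I\cap S_k$, so they cancel. The only unpaired term is $I=\{k\}$, which contributes $g_k(v)$. To make this rigorous I need the agreement hypothesis on all multiple intersections $S_I$, which follows from the pairwise agreement. I also have to be slightly careful that the germ domains are compatible: I shrink to a neighborhood $U$ of the zero section with $\pi_I(U)\subset U$, for instance a disk bundle for a metric making all the $\pi_j$ orthogonal. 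Such a metric exists by averaging over the finite abelian group generated by the reflections $2\pi_j-1$.
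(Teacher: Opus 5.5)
Your proof is correct and is essentially the paper's argument: linearize the target with a microbundle tubular neighborhood, define $g$ by the inclusion--exclusion sum $\sum_{I\neq\emptyset}(-1)^{|I|+1}g_{\min I}(\pi_I v)$, and check the extension property by cancelling the terms for $I$ and $I\cup\{k\}$. Your extra checks (that $S_I$ is a subbundle, and the choice of a $\pi_I$-invariant domain via the averaged metric) only fill in details the paper leaves implicit.
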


\begin{proof}
This follows from the inclusion-exclusion principle.
Using a tubular neighborhood, we can assume the microbundle $(W,N,r)$ is given by a vector bundle. For each nonempty subset $\alpha\subset \{1,\dots,n\}$, $|\alpha|=k$, write $S_\alpha=\bigcap_{j\in \alpha}S_j$, $g_\alpha=g_j|_{S_\alpha}$ where $j\in \alpha$ is arbitrary, and $\pi_\alpha=\pi_{\alpha_1}\dots\pi_{\alpha_k}:V\to S_\alpha$.
Then we can define 
$$g:V\to W \qquad g(x)=\sum_\alpha (-1)^{|\alpha|+1}g_\alpha(\pi_\alpha(x))$$
The formula is clearly smooth, and it commutes with the microbundle projections. Note that if $x\in S_j$, then $\pi_j(x)=x$, and for each $\alpha$ not containing $j$ the terms corresponding to $\alpha$ and $\alpha\cup\{j\}$ cancel out, so only the term $\alpha=\{j\}$ survives, hence yielding $g(x)=g_j(x)$.
\end{proof}

We are now in condition to state and prove our first main result.

\begin{theorem}\label{thm:1}
Every simplicial manifold $G$ admits a frame $\phi$.
\end{theorem}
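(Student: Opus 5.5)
We construct the frame inductively on $n$. For $n=1$, any microbundle tubular neighborhood $\phi_1:(\nu(G_1,G_0),G_0,\pi)\to(G_1,G_0,e)$ will do. The only conditions in degree one are $d_1\phi_1=\phi_0 d_1$ and $s_0\phi_0=\phi_1 s_0$. With $\phi_0=\id$, these say that $\phi_1$ restricted to the zero section is the unit inclusion and that $d_1=e$ is preserved, and both hold by definition of a microbundle tubular neighborhood.

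Now assume $\phi_1,\dots,\phi_{n-1}$ have been built compatibly, and identify $\nu(G_n,G_0)=K(A)_n=\bigoplus_\alpha A_k\otimes e^n_\alpha$ via Remark \ref{rmk:DK2}. We must produce $\phi_n$ satisfying two kinds of conditions.
\begin{enumerate}[(a)]
\item On each degenerate summand $S_{n,j}=s_{j-1}(\nu_{n-1})$ we need $\phi_n s_{j-1}=s_{j-1}\phi_{n-1}$. This prescribes microbundle maps $g_j=s_{j-1}\phi_{n-1}s_{j-1}^{-1}:S_{n,j}\to G_n$.
\item We need $d_i\phi_n=\phi_{n-1}d_i$ for every $i>0$, that is, $d_{n,0}\phi_n=\Phi$, where $\Phi=\prod_{i>0}\phi_{n-1}d_i$ lands in the horn germ $(G_{n,0},G_0)$ because the $\phi_{n-1}$ commute with the positive faces.
\end{enumerate}

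\textbf{Step 1 (degenerate part).} The $g_j$ agree on the intersections $S_{n,j}\cap S_{n,j'}$, by the simplicial identities $s_is_j=s_{j+1}s_i$ together with the inductive compatibility of $\phi_{n-1}$ with degeneracies. The projections $\pi_j$ onto the $S_{n,j}$ commute. Lemma \ref{lemma:nice-family} therefore gives a microbundle map $g:\nu_n\to G_n$ extending all the $g_j$. The map $g$ need not yet satisfy (b) off the degeneracy locus.

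\textbf{Step 2 (horn correction).} Apply the same inclusion–exclusion construction to the projected data $d_{n,0}g_j$ on $S_{n,j}$, and compare the result with $\Phi$. On each $S_{n,j}$ the two agree, since $d_{n,0}s_{j-1}\phi_{n-1}=\Phi s_{j-1}$ by the induction hypothesis. Proposition \ref{prop:proj-deg} guarantees that the projected degeneracies are embedded and that $d_{n,0}|_{S_{n,j}}$ is a diffeomorphism onto them, so the constraint data on the union of degeneracies is consistent.

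Next, use that $d_{n,0}:(G_n,G_0)\to(G_{n,0},G_0)$ is a germ epimorphism, by the earlier Proposition on horn spaces. Choose a germ splitting of this submersion, i.e. a tubular/fibration chart realising $G_n\cong G_{n,0}\times_{G_0} F$ near $G_0$. Here the fibre germ $F$ has normal bundle $A_n$, the non-degenerate summand $A_n\otimes e^n_{\{1,\dots,n\}}$. Choose this chart compatibly with $e$, which is possible because $e$ factors through $d_{n,0}$ for $n\ge2$.

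Then define
$$\phi_n(x)=\big(\Phi(x),\ \text{fibre component of } g(x) \text{ corrected along } A_n\big),$$
and arrange that on $S_{n,j}$ this returns $g_j$. One way is to take the fibre component of $g$ itself, after transporting $g$ by the splitting. Since $g|_{S_{n,j}}$ already maps into the correct fibre over $\Phi$, the modified map agrees with $g_j$ there.

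\textbf{Step 3 (verification).} It remains to check that $\phi_n$ commutes with $e$, preserves $G_0$, and satisfies $\nu(\phi_n)=\id$. The last condition holds because the linearisations of $\Phi$ and $g$ are the identity on the respective summands of $K(A)_n$, so the germ is an isomorphism.

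\textbf{Main obstacle.} The hard part is Step 2: simultaneously imposing (a) and (b). The fibre-product chart for $d_{n,0}$ must be chosen so that replacing the horn component of $g$ by $\Phi$ does not destroy (a). I would try to choose the splitting of $d_{n,0}$ so that it restricts on each $S_{n,j}$ to the graph of $s_{j-1}$ composed with the inverse in Proposition \ref{prop:proj-deg}. Lemma \ref{lemma:nice-family} is again the tool for gluing these local splittings compatibly.
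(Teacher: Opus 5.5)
Your base case is fine, and in Step 2 you correctly observe that $d_{n,0}g_j=\Phi$ on each $S_{n,j}$. The gap is in the construction of $\phi_n$ itself: as proposed, it is not a germ isomorphism.

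The extension given by Lemma \ref{lemma:nice-family} is $g(x)=\sum_\alpha(-1)^{|\alpha|+1}g_\alpha(\pi_\alpha(x))$. Every projection $\pi_j$ kills the top summand $A_n\otimes e^n_{\{1,\dots,n\}}$, so $g(x+a)=g(x)$ for $a\in A_n$. Linearly, $\nu(g)$ is the projection of $\nu_n$ onto the degenerate part $D_n=\sum_j S_{n,j}$ along $A_n$. For any other extension, $\nu(g)|_{A_n}$ is simply not controlled, since the degenerate data never see that direction. The same holds for $\Phi$, since all positive faces kill $A_n$.

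Hence $\phi_n(x)=\chi^{-1}(\Phi(x),\mathrm{pr}_F\chi g(x))$ factors through $\nu_n\to D_n$. Your Step 3 assertion, that the linearisation of $g$ is the identity on ``its'' summand, fails exactly on $A_n$. That is the one direction not pinned down by the inductive data. The phrase ``corrected along $A_n$'' is where the actual content of the inductive step lies, and it is left unspecified.

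Conversely, the obstacle you single out is not one. For any chart of the form $\chi=(d_{n,0},\psi)$, replacing the horn component of $g$ by $\Phi$ cannot spoil (a), because on $S_{n,j}$ that component already equals $\Phi$. No compatibility of the splitting with the degeneracies is needed. What is needed is an $A_n$-valued fibre coordinate with linearisation $\mathrm{pr}_{A_n}$ and prescribed values on the degeneracies.

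For example, take $\psi=\mathrm{pr}_{A_n}\circ\tau^{-1}$ for a microbundle tubular neighborhood $\tau$ of $(G_n,G_0,e)$, so that $\nu(\chi)=(\nu(d_{n,0}),\mathrm{pr}_{A_n})$ is invertible. Then set $\phi_n(x)=\chi^{-1}\big(\Phi(x),\psi(g(x))+\mathrm{pr}_{A_n}(x)\big)$, with $g$ as produced by the lemma. The added term vanishes on every $S_{n,j}$, so (a) and (b) survive. Moreover $\nu(\chi)\nu(\phi_n)=(\nu(d_{n,0}),\mathrm{pr}_{A_n})=\nu(\chi)$, i.e.\ $\nu(\phi_n)=\id$.

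This is essentially the paper's proof read through the inverse. There one writes $\phi_n^{-1}=(r,\phi_{n,0}^{-1}d_{n,0})$ with $r:(G_n,G_0,e)\to(A_n,G_0,e)$ and $Dr|_{A_n}=\id$. Compatibility with degeneracies then amounts to $r(S_{n,j})=0$. This is enforced by the gauge change $r-\sigma\circ\phi_{n,0}^{-1}d_{n,0}$, where $\sigma$ comes from Lemma \ref{lemma:nice-family} on the horn bundle $\nu_{n,0}$ with values in the vector bundle $A_n$. So the lemma is only ever applied with a linear target, and the chart $(r,\phi_{n,0}^{-1}d_{n,0})$ is itself the inverse of $\phi_n$.
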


\begin{proof}
Write $\nu_n=\nu(G,G_0)_n$ and $\nu_{n,0}=\nu(G,G_0)_{n,0}$. We build the microbundle tubular neighborhoods $\phi_n:(\nu_n,G_0,e)\to (G_n,G_0,e)$ inductively on $n$. Note that $\nu_1=A_1$. We start by taking $\phi_1:(A_1,G_0,e) \to (G_1,G_0,e)$ as any microbundle tubular neighborhood. The rest of the proof consists of explaining the inductive step.

Suppose that the tubular neighborhoods $\phi_k$, $k<n$, are given in a way compatible with the positive faces and the degeneracies, and let us construct $\phi_n$. The idea is to regard $(G_n,G_0,e)$ as the central term of a  short exact sequence of microbundles, split it, and linearize factor-wise:
$$(d_{n,0}^{-1}(G_0),G_0,e) \hookrightarrow (G_n,G_0,e) \overset{d_{n,0}}{\to} (G_{n,0},G_0,e)$$ 
Writing the horn as the standard equalizer, since the $\phi_k$ commute with the face maps, the $\phi_k$, $k<n$, already induce a linearization $\phi_{n,0}$ of the horn $(G_{n,0},G_0)$. Indeed, 
$\prod_{i>0}\phi_{n-1}$ restricts to an isomorphism $\phi_{n,0}
: (\nu_{n,0},G_0,e)\to (G_{n,0},G_0,e)$:
$$\xymatrix{
\nu_{n,0} \ar[r]^(.4)\iota \ar[d]_{\phi_{n,0}} &  \prod_{i>0}\nu_{n-1} \ar[d]^{\prod_{i>0}\phi_{n-1}} \ar@<.5ex>[r] \ar@<-.5ex>[r] & \prod_{i>j>0}\nu_{n-2} 
\ar[d]^{\prod_{i>0}\phi_{n-2}}\\
G_{n,0} \ar[r]^(.4)\iota  & \prod_{i>0}G_{n-1} \ar@<.5ex>[r] \ar@<-.5ex>[r] & \prod_{i>j>0}G_{n-2} 
}$$

Write $S'_{n,j}=s_{j-1}(\nu_{n-1})\subset \nu_n$. We claim that $\phi_{n,0}$ restricts well to the projected degeneracies, in the sense that the left square below commutes. This follows by the commutativity of the large square, which can be checked at each component, combined with Proposition \ref{prop:proj-deg} and the inductive hypothesis. 
$$\xymatrix{
\nu_{n-1} \ar[rr]^{d_{n,0}s_{j-1}} \ar[d]_{\phi_{n-1}}& &  d_{n,0}(S'_{n,j})\subset\nu_{n,0}  \ar[d]_{\phi_{n,0}}  \ar[r]^\iota &  \prod_{i>0}\nu_{n-1} \ar[d]^{\prod_{i>0}\phi_{n-1}}
\\
G_{n-1} \ar[rr]^{d_{n,0}s_{j-1}} & & d_{n,0}(S_{n,j})\subset G_{n,0} \ar[r]^\iota   & \prod_{i>0}G_{n-1} 
}$$

By Dold-Kan, we have a canonical decomposition $\nu_n = \nu_{n,0} \oplus A_n$. 
The next step is to define $\phi_n$ lifting $\phi_{n,0}$ along $d_{n,0}:G_n \to G_{n,0}$, so that $D\phi_n|_{A_n}=\id_{A_n}$:
$$\xymatrix{
A_n \ar[r] & A_n\oplus\nu_{n,0} \ar@{-->}[d]^{\phi_n} \ar[r]^{d_{n,0}} & \nu_{n,0} \ar[d]^{\phi_{n,0}}\\ & 
 G_n \ar[r]^{d_{n,0}} &  G_{n,0}  }$$
The inverse of such a $\phi_n$ is of the form $(r,\phi_{n,0}^{-1}d_{n,0})$, where $r$ is a microbundle map
$$
r: (G_n,G_0,e) \to (A_n,G_0,e) \text{ such that }Dr|_{A_n\subset TG_n|_{G_0}}= \id_{A_n} 
$$
The resulting $\phi_n$ will automatically be compatible with the positive faces, for it lifts $\phi_{n,0}$, and $\phi_n$ will be compatible with the degeneracies if and only if $r(S_{n,j})=0$. In fact, $\phi_{n,0}$ restricts well to the projected degeneracies, and by the Dold-Kan formulas \ref{rmk:DK2}, $S'_{n,j}\subset \nu_{n,0}=\bigoplus_{k<n}E_k\otimes\Lambda^k(\Z^n)$. The last step in the proof is to show that we can choose such an $r$.

A map $r: (G_n,G_0,e) \to (A_n,G_0,e)$ such that $Dr|_{A_n\subset TG_n|_{G_0}}= \id_{A_n}$ can be constructed, for example, using a tubular neighborhood $(G_n,G_0)\simeq (\nu_n, G_0)$ and then projecting onto $A_n$. There is also a gauge-like freedom in choosing $r$: given a microbundle map 
$\sigma:(\nu_{n,0},G_0,e) \to (A_n,G_0,e)$
we can consider the modification of $r$ by $\sigma$ as 
$$ r_\sigma: G_n \to A_n, \ r_\sigma(x)= r(x)-\sigma(\phi_{n,0}^{-1}d_{n,0}(x)).$$
It is easy to see that $r_\sigma$ also satisfies $Dr_\sigma|_{A_n\subset TG_n|_{G_0}}= \id_{A_n}$, since $A_n\subset Ker(Td_{n,0})$.
Starting with an arbitrary $r$, we will modify it via a gauge transformation $\sigma$ as follows. The maps 
$$d_{n,0}(S'_{n,j})\subset\nu_{n,0} \to A_n \qquad x\mapsto r\rho_j\phi_{n,0}(x)$$
agree in the intersections, and the projections $\pi_j:\nu_{n,0}\to d_{n,0}(S'_{n,j})$ commute, so we can apply Lemma \ref{lemma:nice-family} to get a microbundle extension $\sigma: (\nu_{n,0},G_0,e) \to (A_n,G_0,e)$. It is straightforward to check that $r_\sigma(s_{j-1}(x))=r(s_{j-1}(x))-\sigma(\phi_{n,0}^{-1}d_{n,0}(s_{j-1}(x)))=0$, completing the proof.
\end{proof}


In the next section, we use frames to compare smooth functions on simplicial manifolds with polynomial functions on the tangent complex, and to define the differentiation functor from simplicial manifolds to higher Lie algebroids.


\section{The differentiation functor}

In this section, we explore the geometry of a simplicial manifold $G$ around $G_0$, and prove Theorem \ref{thm:2}, which both defines and characterizes the higher Lie algebroid $A_G$ of $G$. Concretely, the Chevalley-Eilenberg algebra $CE(A_G)$ is defined to be the quotient $C_N(G)/\J$, where $C_N(G)$ are the normalized cochains on $G$ and $\J$ is generated by cochains with higher vanishing order on the degeneracies. We also present a variant of the result with forms instead of functions.


\subsection{The differentiating ideal} \label{subsec:tangent-cochains}

In this subsection, we review the notion of cochains and normalized cochains on a simplicial manifold, and introduce the key differentiating ideal $\hat J\subset C_N(G)$.


Let $G$ be a simplicial manifold, $S_{n,j}=s_{j-1}(G_{n-1})\subset G_n$ the $j$-th submanifold of degenerate simplices, and $S_n=\bigcup_{j=1}^n S_{n,j}\subset G_n$ the degeneracy locus, which is a union of embedded submanifolds. It follows from the Frame Theorem \ref{thm:1} that these submanifolds intersect cleanly, at least near $G_0$. An {\bf $n$-cochain} is a function $f\in C^\infty(G_n)$, and it is {\bf normalized} if it vanishes over $S_n$.
We write $C^\infty_N(G_n)\subset C^\infty(G_n)$ for the ideal of normalized functions.


Given $f\in C^\infty(G_p)$ and $g\in C^\infty(G_q)$, the cup product $f\cup g\in C^\infty(G_{p+q})$ is given by
$(f\cup g)(x)=f((d_{p+1})^q(x)) . g((d_0)^p(x))$,  where $x\in G_{p+q}$ is a $(p+q)$-simplex, $(d_{p+1})^q(x)\in G_p$ is its front $p$-face, and $(d_0)^p(x)\in G_q$ is its back $q$-face. The differential $\delta(f)\in C^\infty(G_{p+1})$ is given by 
$\delta(f)(x)=\sum_{i=0}^{n+1} (-1)^i f(d_i(x))$. 
The Leibniz rule relating $\cup$ and $\delta$ is
$$\delta(f\cup g)=\delta(f)\cup g + (-1)^p f\cup \delta(g) \qquad f\in C^\infty(G_p),\ g\in C^\infty(G_q)$$
Normalized cochains are closed under the cup product and the differential.

\begin{definition}
The {\bf algebra of cochains} is $C(G)=\bigoplus_n C^\infty(G_n)$ seen as a differential graded algebra endowed with the cup product and the differential. The {\bf algebra of normalized cochains} $C_N(G)$ is the sub-dga consisting of the normalized cochains.    
\end{definition}


We introduce now our differentiating ideal which is generated by the cochains with higher vanishing at the degeneracy locus, as follows. Write $I(S_{n,j})=\{f:f|_{S_{n,j}}=0\}\subset C^\infty(G_n)$, so that $C^\infty_N(G_n)=\bigcap_{j=1}^n I(S_{n,j})$, 
and $f\in I(S_{n,j})^2$ if it vanishes quadratically at $S_{n,j}$. 

\begin{definition} \label{def:diffideal}
For $n>0$, the {\bf higher vanishing ideal} $J_n\subset C_N^\infty(G_n)$ is spanned by the functions with higher vanishing in at least one of the degenerate submanifolds $S_{n,j}$. The {\bf differentiating ideal} $\J\subset C_N(G)$ is the differential ideal generated by $\Jgen=\bigoplus_n \Jgen_n$.
$$\Jgen_n=
\sum_{j=1}^n(C^\infty_N(G_n)\cap I(S_{n,j})^2)\subset C_N^\infty(G_n) \qquad
J=\bigoplus_n \Jgen_n  \subset C_N(G) \qquad 
\hat J = J+\delta(J) \subset C_N(G) 
$$
\end{definition}

By the Leibniz rule, it is easy to check that both $J$ and $J+\delta(J)$ are two-sided ideals for the cup product, and that is why $\hat J=J+\delta(J)$ is the differential ideal spanned by $J$.

\begin{remark}
Write $\ord(f,S_i)$ for the order of vanishing of $f:G_n\to \R$ at $S_i\subset G_n$, and $\ord(f,S)=\sum_{i=1}^n \ord(f,S_{n,i})$ for the {\bf total order}. We can characterize $J$ as the ideal spanned by the normalized cochains of total order $\geq n+1$.
In the terminology of \cite{LoMe23}, $\ord$ is the \emph{total weighting} induced by the multi-weighting defined by the submanifolds $S_{n,j}$.   
\end{remark}

\begin{example}
The $\cup$-ideal $\Jgen$ is not differential in general, so that $\J\neq \Jgen$, as we can see in the next simple example. Consider $G$ the nerve of the abelian Lie group $\R\toto\ast$, so $G_2\cong \R^2$, $S_{2,j}=\{x:x_j=0\}$ are the coordinate axis, and $d_0,d_1,d_2:\R^2\to\R$ are given by $d_0(x)=x_1$, $d_1(x)=x_1+x_2$, and $d_2(x)=x_2$. The 1-cochain $f:\R\to \R$, $f(x)=x^2$, is in $\Jgen_1=\J_1\subset C_N^1(G)$, for it vanishes quadratically on $S_{1,1}=\{0\}\subset\R$, but $\delta(f):\R^2\to\R$, $\delta(f)(x_1,x_2)=x_1^2+x_2^2-(x_1+x_2)^2=-2x_1x_2$ is not in $\Jgen_2$. 
\end{example}


Since we are aiming at interpreting $C_N^\infty(G)/\J$ as a Chevalley-Eilenberg algebra, we need to further understand $J$. Our first observation is that quotienting by $J$ localizes around the units $G_0$.

\begin{lemma} \label{lem:Jgenlocal}
If $f,g\in C^\infty_N(G_n)$ agree on an open $U$ around $G_0\subset G_n$ then $f-g\in \Jgen_n$.
\end{lemma}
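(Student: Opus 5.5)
The plan is a partition of unity argument. It rests on the fact that, away from $G_0$, every point misses at least one of the closed submanifolds $S_{n,j}$. Set $h=f-g$. Then $h\in C^\infty_N(G_n)$ and $h|_U=0$, and we must write $h$ as a finite sum of normalized functions, each vanishing quadratically on some $S_{n,j}$.

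First I check that $\bigcap_{j=1}^n S_{n,j}=G_0$, the totally degenerate simplices. One inclusion is clear. For the other, a simplex that is degenerate in every direction $j=1,\dots,n$ can be written as $\theta^*(y)$ with $\theta:[n]\to[k]$ surjective and $y$ non-degenerate, using the unique decomposition recalled in Section 2. Since $x\in S_{n,j}=s_{j-1}(G_{n-1})$ for every $j$, the map $\theta$ must identify $j-1$ and $j$ for every $j$, so $k=0$. Each $S_{n,j}$ is closed, because $s_{j-1}$ is a closed embedding with left inverse $d_j$. Hence the open sets $W_j=G_n\setminus S_{n,j}$, $j=1,\dots,n$, cover $G_n\setminus G_0$. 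Together with $U$ they therefore give an open cover $\{U,W_1,\dots,W_n\}$ of $G_n$.

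Next I take a smooth partition of unity $\{\psi_U,\psi_1,\dots,\psi_n\}$ subordinate to this cover, with $\operatorname{supp}\psi_j\subset W_j$ closed in $G_n$. Then
$$h=\psi_U h+\sum_{j=1}^n\psi_j h = \sum_{j=1}^n \psi_j h,$$
since $\psi_U h=0$ because $h$ vanishes on $U$. Each $h_j=\psi_j h$ is normalized, because $h$ vanishes on $S_n$. Moreover $h_j$ vanishes on the open neighborhood $G_n\setminus\operatorname{supp}\psi_j$ of $S_{n,j}$.

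It remains to see that a function $h_j$ vanishing on a neighborhood of $S_{n,j}$ lies in $I(S_{n,j})^2$. The sets $\operatorname{supp}\psi_j$ and $S_{n,j}$ are disjoint closed sets, so we can choose a smooth $\chi_j$ with $\chi_j=1$ on $\operatorname{supp}\psi_j$ and $\chi_j=0$ on $S_{n,j}$. Then $h_j=\chi_j\cdot h_j$ with both factors in $I(S_{n,j})$. Thus $h_j\in C^\infty_N(G_n)\cap I(S_{n,j})^2$ and $h\in J_n$.

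The only step needing genuine care is the identification $\bigcap_j S_{n,j}=G_0$, which is what guarantees the cover. It is a purely combinatorial statement about the Eilenberg–Zilber decomposition in each simplicial degree, applied pointwise to the underlying simplicial set, so I expect no real obstacle. If one prefers to avoid it, it suffices to note that the construction only needs $\bigcap_j S_{n,j}\subset U$, which holds because $U$ is an open neighborhood of $G_0$.
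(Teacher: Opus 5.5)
Your proof is correct and is essentially the paper's argument: both use $\bigcap_j S_{n,j}=G_0\subset U$ to split $f-g$ with cutoff functions into a piece supported in $U$, which is therefore zero, and pieces each vanishing on a neighbourhood of a single $S_{n,j}$. The paper builds its cutoffs $\lambda_j$ from a metric and expands $f\prod_j(\lambda_j+(1-\lambda_j))$. Your partition of unity subordinate to $\{U,G_n\setminus S_{n,1},\dots,G_n\setminus S_{n,n}\}$ is a bit more direct, and your function $\chi_j$ makes explicit the step, left implicit in the paper, that vanishing near $S_{n,j}$ implies membership in $I(S_{n,j})^2$.

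The only slip is your closing aside. The inclusion $\bigcap_j S_{n,j}\subset U$ does not follow from $U\supset G_0$ unless you already know $\bigcap_j S_{n,j}\subset G_0$, so it does not let you bypass the Eilenberg--Zilber step, which you carried out correctly anyway.
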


\begin{proof}
Suppose that $f\in C_N^\infty(G_n)$, $f|_U=0$, and let us show that $f\in J_n$.
Let $d$ be a distance for the topology in $G_n$.
For each $j$ we consider the opens 
$$U_j=\{x: d(x,S_{n,j})<\max_{k\neq j}d(x,S_{n,k})\} \qquad \text{and} \qquad V_j=U_j\cup U.$$
We claim that $S_{n,j}\subset V_j$. If 
$x\in \cap_k S_{n,k}=G_0$ then $x\in U$, and if
$x\in S_{n,j}$ is not in some $S_{n,k}$ then $0=d(x,S_{n,j})<\max_{k\neq j}d(x,S_{n,k})$ and $x\in U_j$.
Note that $\cap_j V_j=U$, for if $x\in \cap_j V_j$ and $k$ maximizes $d(x,S_{n,k})$, then $x\in V_k\setminus U_k\subset U$.
Let $\lambda_j:G_n\to[0,1]$ be a smooth map such that $\lambda_j=1$ in a neighborhood of $S_{n,j}$ and $\lambda_j=0$ outside $V_j$. Finally,
$$f=f\prod_j (\lambda_j + (1-\lambda_j))\in J_n$$
because $f\prod_j\lambda_j=0$ and $f(1-\lambda_j)\in C^\infty_N(G_n)\cap I_j^2$ for each $j$.
\end{proof}

\begin{corollary}\label{cor:framemodJ}
A frame $\phi$ of a simplicial manifold $G$ yields linear isomorphisms between the quotients $\phi_n^*:C_N^\infty(G_n)/\Jgen_n\to C_N^\infty(\nu(G_n,G_0))/\Jgen_n$ for every $n\geq0$.
\end{corollary}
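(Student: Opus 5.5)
The plan is to use the germ nature of $\phi_n$ together with Lemma \ref{lem:Jgenlocal}, which says that $C^\infty_N(G_n)/\Jgen_n$ only depends on the germ of $G_n$ around $G_0$. On the normal bundle side, $\nu(G_n,G_0)$ carries the simplicial vector bundle structure of Example \ref{ex:normal}. Its degeneracy locus is $S'_{n,j}=s_{j-1}(\nu_{n-1})$, so $C^\infty_N(\nu(G_n,G_0))$ and $\Jgen_n\subset C^\infty_N(\nu(G_n,G_0))$ are defined by the same recipe, with $S'_{n,j}$ in place of $S_{n,j}$.

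First, $\phi_n$ is only a germ, so I choose a representative: an open neighborhood $U'$ of the zero section in $\nu(G_n,G_0)$ and an open $U\supset G_0$ in $G_n$ with $\phi_n:U'\to U$ a diffeomorphism. The frame condition $s_{j-1}\phi_{n-1}=\phi_n s_{j-1}$, after shrinking $U'$ if necessary, gives $\phi_n(S'_{n,j}\cap U')=S_{n,j}\cap U$ for each $j$. Both sides are embedded submanifolds, and the equation holds as germs along $G_0$. To get the reverse inclusion $\phi_n^{-1}(S_{n,j}\cap U)\subset S'_{n,j}$, I use that $\phi_n^{-1}$ satisfies the analogous compatibility with the degeneracies.

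Since a diffeomorphism carrying $S'_{n,j}$ to $S_{n,j}$ preserves vanishing ideals and their squares, pullback maps $C^\infty_N(U)$ isomorphically onto $C^\infty_N(U')$ and carries $C^\infty_N\cap I(S_{n,j})^2$ on $U$ onto the corresponding ideal on $U'$. The local version of $\Jgen_n$ is therefore preserved.

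To define $\phi_n^*$ on global quotients, given $f\in C^\infty_N(G_n)$ I take $f\circ\phi_n$ on $U'$ and extend it to a normalized function on all of $\nu(G_n,G_0)$. The extension comes from a bump function $\lambda$ equal to $1$ near the zero section and supported in $U'$. Setting $\lambda\cdot(f\circ\phi_n)$ keeps the function normalized, because the product of a normalized function with any function is normalized. By Lemma \ref{lem:Jgenlocal}, the class modulo $\Jgen_n$ does not depend on the choices of $\lambda$, $U'$ or representative. The germ of an element of $\Jgen_n$ on $G_n$ gives a class in $\Jgen_n$ on $\nu$, again by Lemma \ref{lem:Jgenlocal}. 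The extension step needs some care here: for $g\in C^\infty_N\cap I(S_{n,j})^2$, the function $\lambda\cdot(g\circ\phi_n)$ lies in $I(S'_{n,j})^2$ because $\lambda$ is a multiplier. Hence the map descends to the quotients. Running the same construction with $\phi_n^{-1}$ gives an inverse, since both composites agree with the identity near $G_0$ and so are the identity modulo $\Jgen_n$ by Lemma \ref{lem:Jgenlocal}. The case $n=0$ is trivial, taking $\phi_0=\id$.

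The step I expect to be the main obstacle is the identification $\phi_n(S'_{n,j})=S_{n,j}$ near $G_0$ as actual sets, not just as germs at $G_0$. Lemma \ref{lem:Jgenlocal} only requires agreement on some open $U\supset G_0$, but the ideals $I(S_{n,j})$ are defined on the whole of $G_n$. I will handle this by working throughout with a representative of $\phi_n$ on which the degeneracy compatibility holds pointwise, and by shrinking $U$ via the closed embeddings $s_{j-1}$.
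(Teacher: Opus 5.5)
Your proposal is correct and matches the paper's approach. The paper gives no separate proof and treats the corollary as immediate from Lemma \ref{lem:Jgenlocal} (the quotient only depends on germs at $G_0$) together with the frame's compatibility with the degeneracies, which makes $\phi_n$ carry the $S'_{n,j}$ onto the $S_{n,j}$ near $G_0$. Your handling of representatives and cutoffs fills in the details the paper omits; for the step you call ``$\lambda$ is a multiplier'', use a second bump $\mu$ with $\mu\lambda=\lambda$ so that $\lambda\,(g\circ\phi_n)$ is a sum of products of globally defined functions in $I(S'_{n,j})$.
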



\subsection{Normal coordinates and monomials}


In this subsection, we study cochains modulo the differentiating ideal using normal coordinates and monomials. We show that the quotients $C_N^\infty(G_n)/J_n$ and $C_N^\infty(G_n)/\hat J_n$ are finitely generated since these classes are completely determined by their Taylor expansion normal to the units $G_0$. We also give formulas for the induced cup product and the differential on these quotients.


Following Theorem \ref{thm:1} and Corollary \ref{cor:framemodJ}, and to ease the notations, we will assume that $G_n=\nu(G,G_0)_n\cong\bigoplus_{k=1}^n A_k\otimes\Lambda(\Z^n)$, and that the positive faces and the degeneracies are given as in Dold Kan \ref{rmk:DK2}. We write $\pi:G_n\to G_0$ for the projection.

\begin{definition}
Given $\alpha\subset\{1,\dots,n\}$, $|\alpha|=k$, and given $\ell\in\Gamma A_k^*$, $\ell\neq0$, the {\bf normal coordinate function} $x_{\alpha,\ell}: G_n \to \R$ is the corresponding fiberwise linear function, namely
$$x_{\alpha,\ell}(v\otimes e^n_\beta)=
 \begin{cases}
\ell_{\pi(v)}(v) & \alpha=\beta \\
             0 & \text{otherwise}.
                      \end{cases}$$
Given a labeled sequence $(P,L)= 
((\alpha_1,\ell_1),\dots,(\alpha_s,\ell_s))$, 
the {\bf normal monomial} is the product of the corresponding normal coordinate functions:
$$x_{P,L}=x_{\alpha_1,\ell_1}\dots x_{\alpha_s,\ell_s}.$$
\end{definition}


The following is a direct consequence of the definition and of the Dold-Kan formulas.
\begin{lemma}\label{lemma:faces-degeneracies}
The positive faces and degeneracies are given in normal coordinate functions by the following formulas: 
$$x_{\alpha,\ell}\circ s_{j-1}= \begin{cases}
    x_{\sigma_{j-1}(\alpha),\ell} & j\notin \alpha \\
    0 & j\in \alpha \end{cases}
    \qquad
x_{\alpha,\ell}\circ d_i= \begin{cases}
    x_{\delta_i(\alpha),\ell} & i\notin \alpha \\
    x_{\delta_i(\alpha),\ell}+x_{\delta_{i+1}(\alpha),\ell} & i\in \alpha \end{cases}$$
\end{lemma}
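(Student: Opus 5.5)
The plan is to work in the frame model fixed just before the Definition: $G_n=\bigoplus_k A_k\otimes\Lambda^k(\Z^n)$, with the positive faces and degeneracies given by the Dold--Kan formulas of Remark \ref{rmk:DK2}. Both $d_i$ ($i>0$) and $s_{j-1}$ are then fiberwise linear bundle maps over $\id_{G_0}$. Hence for every $\phi$ the pullback of $x_{\alpha,\ell}$ is again fiberwise linear, and it suffices to evaluate it on the spanning tensors $v\otimes e^m_\beta$. Linearity and the identity $\pi\circ d_i=\pi$, $\pi\circ s_{j-1}=\pi$ keep the base point unchanged, so $\ell_{\pi(v)}$ is evaluated at the same point throughout.

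\emph{Degeneracies.} By (b) of Remark \ref{rmk:DK2}, $s_{j-1}(v\otimes e^{n-1}_\beta)=v\otimes e^n_{\delta_j(\beta)}$. Therefore $x_{\alpha,\ell}(s_{j-1}(v\otimes e^{n-1}_\beta))$ equals $\ell(v)$ if $\alpha=\delta_j(\beta)$, and $0$ otherwise.
\begin{itemize}
\item If $j\in\alpha$, then $\alpha$ is never of the form $\delta_j(\beta)$, since $\delta_j$ misses $j$. So the composite vanishes.
\item If $j\notin\alpha$, then $\alpha=\delta_j(\beta)$ holds iff $\beta=\sigma_{j-1}(\alpha)$. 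Here I use that $\sigma_{j-1}$ restricted to $\operatorname{im}\delta_j$ is the inverse of $\delta_j$, with the index shift between the conventions on $\{1,\dots,n\}$ checked once.
\end{itemize}
This gives the first formula.

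\emph{Positive faces.} By (c), for $i>0$ we have $d_i(v\otimes e^{n}_\beta)=v\otimes e^{n-1}_{\sigma_i(\beta)}$ when $\{i,i+1\}\not\subset\beta$, and $0$ otherwise. Here I read the paper's phrase ``$i\neq\alpha$ or $i+1\neq\alpha$'' as ``not both in $\beta$'', which is the note following the formula. Pulling back $x_{\alpha,\ell}$ with $\alpha\subset\{1,\dots,n-1\}$ gives the sum over all $\beta$ with $\sigma_i(\beta)=\alpha$ and $|\beta|=|\alpha|$ of the coordinate $x_{\beta,\ell}$. So I enumerate the preimages of $\alpha$ under $\sigma_i$ that are injective on $\beta$:
\begin{itemize}
\item If $i\notin\alpha$, the unique preimage is the shift $\delta_i(\alpha)$, which avoids both $i$ and $i+1$.
\item If $i\in\alpha$, the element $i$ of $\alpha$ can come from either $i$ or $i+1$ in $\beta$. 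This gives exactly the two sets $\delta_i(\alpha)$ and $\delta_{i+1}(\alpha)$, with the rest shifted in the same way. Both contain only one of $i,i+1$, so neither is killed.
\end{itemize}
This yields $x_{\delta_i(\alpha),\ell}+x_{\delta_{i+1}(\alpha),\ell}$.

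The only real obstacle is bookkeeping. One has to pin down the index conventions translating $\delta_i$ and $\sigma_j$ on $[n]$ into operations on subsets of $\{1,\dots,n\}$, via the correspondence $\tilde\alpha(t)=|\{1,\dots,t\}\cap\alpha|$. One also has to confirm that the preimages of $\alpha$ under $\sigma_i$ are exactly those listed. I would verify this on the surjection side: a surjection $[n]\to[k]$ factoring through $\sigma_i$ corresponds to $\beta$ with $i\notin\beta$ or $i+1\notin\beta$ appropriately. I would then check small cases, for instance $n=2$, against Lemma-level examples such as the nerve of $\R$.
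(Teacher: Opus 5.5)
Your proposal is correct and follows the paper's own argument, which the paper states only as a direct consequence of the definition and the Dold--Kan formulas: the positive faces and degeneracies are fiberwise linear over $\id_{G_0}$, so one only has to determine which summands $A_k\otimes e_\beta$ are sent onto $A_k\otimes e_\alpha$, exactly as you do. The one convention to pin down is the top face $i=n$, where the Dold--Kan formula must be read as $d_n(v\otimes e^n_\beta)=0$ whenever $n\in\beta$ (the literal condition ``$\{n,n+1\}\not\subset\beta$'' never triggers), and then the unique surviving preimage is $\beta=\alpha=\delta_n(\alpha)$, which is what your enumeration concludes anyway.
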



We then see that the function $x_{\alpha,\ell}$, $|\alpha|=k$, vanishes over $S_{n,j}=s_{j-1}(G_{n-1})$ if and only if $j\in \alpha$, and in that case $\ord(x_{\alpha,\ell},S_{n,j})=1$. Thus, the total order of $x_{\alpha,\ell}$ is exactly $k$. 
Note that $\delta_{i+1}(\alpha)=\delta_i(\alpha)$ if $i\notin\alpha$, $0<i\neq n$, and that $n\notin\alpha\subset\{1,\dots,n-1\}$. 
Given an ordered sequence $P=(\alpha_1,\dots,\alpha_s)$, we say that it is a {\bf covering} if $\bigcup_{i=1}^s\alpha_i=\{1,\dots,n\}$, and that it has {\bf overlap at $k$} if $k\in \alpha_i\cap\alpha_j$ with $i\neq j$. A {\bf partition} $P$ is a covering without overlaps. With these, the normal monomial $x_{P,L}$ is a normalized cochain if and only if $P$ is a covering, and $x_{P,L}$ is in $J$ if and only if $P$ is a covering with overlap. 


\begin{proposition}\label{prop:generators}
\begin{enumerate}[a)]
    \item $I(S_{n,j})$ is spanned by the normal coordinate functions $x_{\alpha,\ell}$ with $j\in\alpha$. 
    In other words, if $f\in I(S_{n,j})$ then it admits a finite expansion as follows:
$$f=\sum_{j\in\alpha}\sum_{\ell } f_{\alpha,\ell} x_{\alpha,\ell}$$
Moreover, if $f\in I(S_{n,j})^2$ then we can assume that each $f_{\alpha,\ell}$ is in $I(S_{n,j})$ too, and if $f\in I(S_{n,j'})$ and $j'\notin\alpha$, then we can assume that $f_{\alpha,\ell}\in I(S_{n,j'})$.
    \item  $C_N(G_n)$ is spanned by the normal monomials $x_{P,L}$ with $P$ a covering. 
    %
    \item 
    $J_n$ is spanned by the normal monomials $x_{P,L}$ with $P$ a covering with at least one overlap.
\end{enumerate}
\end{proposition}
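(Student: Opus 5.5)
The plan is to work in the linearized model fixed just above: $G_n=\nu(G,G_0)_n\cong\bigoplus_{\alpha}A_{|\alpha|}\otimes e^n_\alpha$ is a vector bundle over $G_0$, and each $S_{n,j}$ is the subbundle $\bigoplus_{j\notin\alpha}A_{|\alpha|}\otimes e^n_\alpha$. Choose local frames of the dual bundles $A_k^*$. Then the $x_{\alpha,\ell}$ become fiberwise linear coordinates. The subbundle $S_{n,j}$ is cut out exactly by the coordinates $x_{\alpha,\ell}$ with $j\in\alpha$, while the coordinates with $j\notin\alpha$ restrict to coordinates on $S_{n,j}$. To obtain global finite sums I will use a partition of unity on $G_0$ together with finitely many global sections $\ell$ spanning each $A_k^*$ (for instance by embedding $A_k$ in a trivial bundle). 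The coefficient functions $f_{\alpha,\ell}$ are then smooth functions on $G_n$.

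For (a), apply the parametric Hadamard lemma in the fiber directions transverse to $S_{n,j}$. Write $y$ for the coordinates $x_{\alpha,\ell}$ with $j\in\alpha$ and $z$ for the remaining ones. Then
$$f(y,z)=f(0,z)+\sum y_i\int_0^1\partial_{y_i}f(ty,z)\,dt,$$
and $f(0,z)=0$ because $f\in I(S_{n,j})$. This gives the expansion. If $f\in I(S_{n,j})^2$, then $f$ and $\partial_{y_i}f$ both vanish on $\{y=0\}$, so the coefficients $\int_0^1\partial_{y_i}f(ty,z)\,dt$ vanish at $y=0$ and lie in $I(S_{n,j})$. If $f\in I(S_{n,j'})$ and $j'\notin\alpha$, the coordinate $x_{\alpha,\ell}$ is not among those cutting out $S_{n,j'}$. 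The set $S_{n,j'}$ is $\{x_{\beta,\cdot}=0 : j'\in\beta\}$, and the transverse rescaling $ty$ preserves it, since it only rescales the $y$-coordinates and keeps the zero locus in those $\beta$ with $j,j'\in\beta$. Hence $\partial_{y_i}f$ vanishes on $S_{n,j'}$ when $y_i$ is one of the coordinates that are free on $S_{n,j'}$, which gives the claim. I will check that the rescaling commutes with the linear subspaces $S_{n,j'}$; it does, because everything is a coordinate subspace in the same linear coordinates.

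For (b), iterate (a) over $j=1,\dots,n$. Given $f\in C^\infty_N(G_n)=\bigcap_j I(S_{n,j})$, expand with respect to $S_{n,1}$ using only $\alpha\ni 1$. By the last clause of (a), each coefficient still lies in $I(S_{n,j'})$ for every $j'\notin\alpha$. For each term, pick $j'$ not yet covered by $\alpha$ and expand the coefficient along $S_{n,j'}$, preserving vanishing on the remaining uncovered indices. After at most $n$ steps every term is a smooth function times a normal monomial whose index sequence covers $\{1,\dots,n\}$. The final smooth coefficient is arbitrary and can be absorbed, since the spanning is over $C^\infty(G_n)$-multiples. If instead one wants coefficients pulled back from $G_0$ modulo higher terms, a Taylor expansion would be needed; I read "spanned" as a module statement. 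Conversely, each such monomial is normalized, since $x_{\alpha,\ell}$ vanishes on $S_{n,j}$ whenever $j\in\alpha$.

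For (c), the reverse inclusion is immediate. If $P$ is a covering with overlap at $k$, then two factors vanish on $S_{n,k}$, so $x_{P,L}\in I(S_{n,k})^2\cap C_N$. For the forward inclusion, take $f\in C_N\cap I(S_{n,k})^2$ and run the procedure of (b), but begin with the stronger clause of (a) at $S_{n,k}$. Then $f=\sum f_{\alpha,\ell}x_{\alpha,\ell}$ with $k\in\alpha$, and each $f_{\alpha,\ell}\in I(S_{n,k})$ as well as in $I(S_{n,j'})$ for $j'\notin\alpha$. Expanding $f_{\alpha,\ell}$ once more along $S_{n,k}$ produces a second factor with index containing $k$, which is the overlap, and the coverage step of (b) then finishes the term. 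The main obstacle is the bookkeeping in this iteration: each Hadamard step must preserve all the remaining vanishing conditions at once. I would handle it with the coordinate-subspace argument above, applied as an induction on the number of uncovered indices.
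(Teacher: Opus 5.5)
Your proposal is correct and follows essentially the same route as the paper. Both use the Hadamard expansion along the curve that rescales only the coordinates $x_{\alpha,\ell}$ with $j\in\alpha$, with coefficients $f_{\alpha,\ell}=\int_0^1\partial_{x_{\alpha,\ell}}f(x^t)\,dt$; these vanish on $S_{n,j}$ when $f\in I(S_{n,j})^2$, and on each $S_{n,j'}$ with $j'\notin\alpha$ because the rescaling preserves these coordinate subbundles, and the expansion is then iterated for (b) and (c) and globalized by a partition of unity. The only cosmetic difference is that you globalize using finitely many global sections spanning each $A_k^*$, whereas the paper covers $G_0$ by finitely many trivializing opens; both work.
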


\begin{proof}
Suppose first that $f$ has a small support, in the sense that it is contained in an open $U$ trivializing the $A_k$, $n\geq k$, and take for each $k$ a finite family $B_k\subset\Gamma(A_k^*)$ giving a basis of sections for $A_k^*|_{U}$.
Given $x\in G_n$, consider the curve $t\mapsto x^t$, with $\pi_\alpha(x^t)=t\pi_\alpha(x)$ if $j\in\alpha$ and $\pi_\alpha(x^t)=\pi_\alpha(x)$ if $j\notin\alpha$. Then $x^1=x$, $x^0\in S_{n,j}$, and 
$$f(x)=f(x^1)-f(x^0)=\int_0^1 \frac{d}{dt}f(x^t)dt 
= \sum_{j\in\alpha}\sum_{\ell\in B_{|\alpha|}} x_{\alpha,\ell}f_{\alpha,\ell}(x)$$
where $f_{\alpha,\ell}(x)=\int_0^1 \frac{\partial}{\partial x_{\alpha,\ell}}f(x^t) dt$. These formulas imply that
 $\ord (f_{\alpha,\ell},S_{n,j})=\ord (f,S_{n,j})-1$ and that
$\ord (f_{\alpha,\ell},S_{n,j'})=\ord (f,S_{n,j'})$ for $j'\notin\alpha$, proving a).
If $f$ is of small support and either normalized or with higher vanishing, we can iterate the above expansions and prove b) and c).
%
Finally, if $f$ has no small support, then we can cover  $G_0$ with finitely many small opens $U_i$, in the sense that they trivialize each of the vector bundles $A_k$, $n\geq k$, and use a partition of 1 to write $f=\sum_i f_i$ with $\supp f_i\subset U_i$ and $\ord(f_i,S_{j'})\geq \ord(f,S_{j'})$ for every $j'$.
\end{proof}

We say that a partition $P=(\alpha_1,\dots,\alpha_s)$ is {\bf well-ordered} if $\min \alpha_i<\min\alpha_{i+1}$ for every $i$. Of course, any partition can be rearranged into a unique well-ordered one.

\begin{corollary}
The $n$th order Taylor expansion along the $\pi$-fibers, $T_{\leq n}:C^\infty(G_n)\to \Gamma(G_0,S^\bullet(G_n^*))$, yields a $C^\infty(G_0)$-isomorphism 
$$C^\infty_N(G_n)/\Jgen_n \cong \bigoplus_{\substack{P=(\alpha_1,\dots,\alpha_s) \\ \text{ well-ordered partition}}}
\Gamma(A_{{k_1}}^*)\otimes \dots \otimes \Gamma(A_{{k_s}}^* ) \qquad k_i=|\alpha_i|$$
\end{corollary}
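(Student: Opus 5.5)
We work in the linearized model fixed above: $G_n=\nu(G,G_0)_n=\bigoplus_{\alpha}A_{|\alpha|}\otimes e^n_\alpha$, which is justified by Theorem \ref{thm:1} and Corollary \ref{cor:framemodJ}. Define the map $\Phi:C^\infty_N(G_n)\to\bigoplus_P \Gamma(A_{k_1}^*)\otimes\dots\otimes\Gamma(A_{k_s}^*)$ as follows. Take the fiberwise Taylor expansion $T_{\leq n}f$ along the fibers of $\pi$. Decompose each homogeneous polynomial according to the splitting of $G_n^*$ into the summands $A_k^*\otimes(e^n_\alpha)^*$. Keep only the multilinear components indexed by sequences $(\alpha_1,\dots,\alpha_s)$ that form a partition of $\{1,\dots,n\}$. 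Since the $\alpha_i$ of a partition are distinct, the corresponding component of a symmetric tensor is an honest element of $\Gamma(A^*_{k_1})\otimes\dots\otimes\Gamma(A^*_{k_s})$. We order its factors via the unique well-ordering of $P$, which is possible because the $\alpha_i$ are distinct and disjoint. Note that the total degree of such a term is $s\leq n$, so the order-$n$ expansion suffices. Finally, $\Phi$ is $C^\infty(G_0)$-linear, since multiplication by $\pi^*h$ only multiplies Taylor coefficients by $h$.

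Next we show that $\Phi$ kills $J_n$. By Proposition \ref{prop:generators}(c), $J_n$ is spanned over $C^\infty(G_n)$ by monomials $x_{P,L}$ with $P$ a covering with overlap. Consider a product $g\,x_{P,L}$ and expand $g$ in Taylor series. Every resulting monomial contains the factors of $x_{P,L}$, so its index sequence has a repeated element of $\{1,\dots,n\}$ and cannot be a partition. Hence $\Phi(g\,x_{P,L})=0$, and $\Phi$ descends to the quotient $C^\infty_N(G_n)/J_n$.

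Surjectivity is immediate: a tensor $\ell_1\otimes\dots\otimes\ell_s$ attached to a well-ordered partition $P$ is the image of the normal monomial $x_{P,L}$. This monomial is normalized because $P$ is a covering, and it is a polynomial, so it equals its own Taylor expansion. Surjectivity then extends by $C^\infty(G_0)$-linearity and finite sums, using local bases of sections as in the proof of Proposition \ref{prop:generators}.

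For injectivity, suppose $\Phi(f)=0$. By Proposition \ref{prop:generators}(b), $f=\sum g_{P,L}\,x_{P,L}$ with each $P$ a covering. Every term whose $P$ has an overlap already lies in $J_n$. For a term whose $P$ is a partition, write $g_{P,L}=\pi^*(g_{P,L}|_{G_0})+h$ with $h$ vanishing on $G_0$. The remainder $h$ lies in the ideal generated by the coordinates $x_{\beta,\ell}$ (first-order expansion in the fiber). Since $P$ covers, $\beta\cap\alpha_i\neq\emptyset$ for some $i$, so $x_{\beta,\ell}\,x_{P,L}$ is a covering with overlap and lies in $J_n$. Modulo $J_n$, $f$ is therefore a $C^\infty(G_0)$-combination of partition monomials $x_{P,L}$. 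On such combinations $\Phi$ is evidently injective, since distinct partitions give distinct, linearly independent components of the Taylor polynomial. Hence $f\in J_n$.

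The main care needed is the bookkeeping identifying symmetric-tensor components with ordered tensors over well-ordered partitions. In particular, different orderings of the same partition must give the same monomial and should not be double-counted. This is resolved by always passing to the unique well-ordered representative. Finally, the result for the original $G$ follows by transporting along the frame isomorphism $\phi_n^*$ of Corollary \ref{cor:framemodJ}.
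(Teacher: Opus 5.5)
Your proof is correct and follows essentially the paper's route. The paper likewise reduces a normalized $f$ modulo $J_n$ to a combination $\sum_{P,L} f_{P,L}\,x_{P,L}$ over partitions, with $\pi$-basic coefficients determined by the mixed fiber derivatives $(\partial_{x_{\alpha_1,\ell_1}}\cdots\partial_{x_{\alpha_s,\ell_s}}f)|_{G_0}$, working first on small supports and then gluing with a partition of unity. You spell out the two points the paper leaves implicit: that the partition components of the Taylor expansion vanish on $J_n$, and that the non-basic parts of the coefficients are absorbed into $J_n$ through overlaps.
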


\begin{proof}
As in the proof of Proposition \ref{prop:generators}, suppose first that $f\in C_N^\infty(G_n)$ has a small support contained in $U$, for each $k$ we fix a finite family $B_k\subset\Gamma(A_k^*)$ giving a basis for $A_k^*|_{U}$, and we can restrict the labelings to these families calling them \emph{basic}. Then $$f\equiv_{J_n} \sum_{P \text{ partition}}\left(\sum_{L \text{ basic labeling}}  f_{P,L}x_{P,L}\right)$$
where, if $(P,L)=((\alpha_1,\ell_1),\dots,(\alpha_s,\ell_s))$, then $f_{P,L}$ is uniquely determined as the $\pi$-basic function corresponding to $(\frac{\partial}{\partial x_{\alpha_1,\ell_1}}\cdots\frac{\partial}{\partial x_{\alpha_s,\ell_s}}f)|_{G_0}$. For a general $f$ we use a partition of unit argument.
\end{proof}

Given a sequence $P=(\alpha_1,\dots,\alpha_s)$, we write  $|P|=\bigcup_i \alpha_i \subset\{1,\dots,n\}$. Thus, $P$ is a covering if and only if $|P|=\{1,\dots,n\}$. 

\begin{lemma}\label{lemma:local-d0}
If $j\in \delta_0(\alpha)\cup\{1\}$ then $x_{\alpha,\ell}d_0-x_{\delta_0(\alpha),\ell}$ vanishes over $S_{n,j}$, and we can write 
$$x_{\alpha,\ell} \circ d_0= x_{\delta_0(\alpha),\ell} + \sum_{|P|\supset \delta_0(\alpha)\cup\{1\}}\sum_{L}   f_{P,L}x_{P,L}$$
\end{lemma}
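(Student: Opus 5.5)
We work in the framed model of this subsection, $G_n=\nu(G,G_0)_n$, where the positive faces and the degeneracies are the Dold--Kan maps; only $d_0$ is nonlinear. The plan has two steps. First we prove the vanishing claim on each $S_{n,j}$ with $j\in\delta_0(\alpha)\cup\{1\}$. Then we feed this into the iterated expansion of Proposition \ref{prop:generators}~a), one degenerate submanifold at a time.

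\emph{Vanishing on $S_{n,1}$.} On $S_{n,1}=s_0(G_{n-1})$ we have $d_0s_0=\id$. Hence $(x_{\alpha,\ell}\circ d_0)(s_0y)=x_{\alpha,\ell}(y)$. By Lemma \ref{lemma:faces-degeneracies}, the restriction of $x_{\delta_0(\alpha),\ell}$ to $S_{n,1}$ gives $x_{\delta_0(\alpha),\ell}\circ s_0=x_{\sigma_0\delta_0(\alpha),\ell}=x_{\alpha,\ell}$, because $1\notin\delta_0(\alpha)$ and $\sigma_0\delta_0=\id$. So the two functions agree on $S_{n,1}$.

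\emph{Vanishing on $S_{n,j}$ for $j\in\delta_0(\alpha)$.} Here $j=a+1$ with $a\in\alpha$, so $j\geq2$. We use the identity $d_0s_{j-1}=s_{j-2}d_0$, and obtain $(x_{\alpha,\ell}\circ d_0)(s_{j-1}y)=x_{\alpha,\ell}(s_{j-2}d_0y)$. Since $j-1\in\alpha$, Lemma \ref{lemma:faces-degeneracies} gives $x_{\alpha,\ell}\circ s_{j-2}=0$, so this value vanishes. On the other side, $x_{\delta_0(\alpha),\ell}\circ s_{j-1}=0$ because $j\in\delta_0(\alpha)$. So the difference $h=x_{\alpha,\ell}\circ d_0-x_{\delta_0(\alpha),\ell}$ lies in $\bigcap_{j\in\delta_0(\alpha)\cup\{1\}}I(S_{n,j})$.

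\emph{The expansion.} List $\delta_0(\alpha)\cup\{1\}=\{j_1,\dots,j_m\}$. Apply Proposition \ref{prop:generators}~a) to $h$ with respect to $S_{n,j_1}$. This writes $h=\sum_{j_1\in\beta}f_{\beta,\ell'}\,x_{\beta,\ell'}$, where each coefficient $f_{\beta,\ell'}$ still vanishes on $S_{n,j'}$ for every remaining $j'\notin\beta$; this is exactly the ``moreover'' clause of that proposition. Then expand each coefficient with respect to the next index $j_i$ not already in the union of the $\beta$'s collected so far, and iterate. After at most $m$ steps every term is a monomial $x_{P,L}$ with $|P|\supset\delta_0(\alpha)\cup\{1\}$, multiplied by a smooth function $f_{P,L}$. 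This is the displayed formula. For finiteness of the sums, and for handling supports, we use the small-support reduction and partition of unity already used in the proof of Proposition \ref{prop:generators}.

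\emph{Main obstacle.} The key point is that the vanishing order with respect to $S_{n,j'}$ survives each expansion step with respect to a different $S_{n,j}$. The ``moreover'' clause of Proposition \ref{prop:generators}~a) provides this. Its proof relies on the clean, linear intersection of the $S_{n,j}$ in the framed model: the homotopy $x^t$ only rescales the components $\pi_\beta$ with $j\in\beta$, so it preserves $S_{n,j'}$ whenever $j'\notin\beta$. Note that $h$ itself is nonlinear, since it involves $d_0$. This causes no difficulty, because the expansion only uses that $h$ is smooth and vanishes on the relevant submanifolds.
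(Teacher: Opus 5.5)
Your proposal is correct and follows the paper's proof. The two vanishing checks are exactly the paper's: $d_0s_0=\id$ on $S_{n,1}$, and $d_0s_{j-1}=s_{j-2}d_0$ with $j-1\in\alpha$ for $j\in\delta_0(\alpha)$, both combined with Lemma \ref{lemma:faces-degeneracies}. Your iterated use of the ``moreover'' clause of Proposition \ref{prop:generators}~a), applied to all remaining $j'$ at once, just spells out what the paper compresses into ``the last claim follows from Proposition \ref{prop:generators}''. One cosmetic point in your closing remark: the rescaling $x^t$ preserves every $S_{n,j'}$, and what actually needs $j'\notin\beta$ is that $\partial/\partial x_{\beta,\ell}$ be tangent to $S_{n,j'}$; this does not affect your argument.
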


\begin{proof} 
For $j=1$, we have $x_{\alpha,\ell}d_0s_0=x_{\alpha,\ell}$, and by Lemma \ref{lemma:faces-degeneracies}, $x_{\delta_0(\alpha),\ell}s_0=x_{\alpha,\ell}$, thus
$x_{\alpha,\ell}d_0$ and $x_{\delta_0(\alpha),\ell}$ agree over $S_{n,1}=s_0(G_{n-1})$. 
Suppose now that $j\in \delta_0(\alpha)$. Then $x_{\delta_0(\alpha),\ell}$ vanishes over $S_{n,j}$. And since $j-1\in \alpha$, we have $x_{\alpha,\ell}s_{j-2}=0$, then $x_{\alpha,\ell}d_0s_{j-1}=x_{\alpha,\ell}s_{j-2}d_0=0$, and $x_{\alpha,\ell}d_0$ also vanishes over $S_{n,j}$. 
The last claim follows from Proposition \ref{prop:generators}.  
\end{proof}


\begin{corollary}\label{cor:deltaord}
$\ord (x_{\alpha,\ell}\circ d_i)\geq \ord(x_{\alpha,\ell})$ for all $i$, and $\ord(\delta(f))\geq \ord(f)$ for all $f\in C^\infty(G_n)$.
\end{corollary}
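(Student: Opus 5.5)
We work in the frame-induced model of Theorem \ref{thm:1} and Corollary \ref{cor:framemodJ}: $G_n=\nu(G,G_0)_n$, the positive faces and degeneracies are given by the Dold--Kan formulas, and we use the normal coordinates $x_{\alpha,\ell}$. Throughout, $\ord$ means the total order $\ord(f,S)=\sum_j\ord(f,S_{n,j})$. It is additive under products, since each $S_{n,j}$ is a linear subbundle and vanishing order along a submanifold is additive for products of functions. The recorded value is $\ord(x_{\alpha,\ell})=|\alpha|$. The plan is to prove the first claim one face at a time, and then deduce the second claim from the first via Taylor expansion.

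\emph{Positive faces.} By Lemma \ref{lemma:faces-degeneracies}, $x_{\alpha,\ell}\circ d_i$ is either $x_{\delta_i(\alpha),\ell}$ or $x_{\delta_i(\alpha),\ell}+x_{\delta_{i+1}(\alpha),\ell}$. Each term is a normal coordinate indexed by a set of cardinality $|\alpha|$, so it has total order $|\alpha|$ on $G_{n+1}$. A sum of functions each vanishing to order $\geq a_j$ along $S_{n+1,j}$ still does so, so each partial order is at least the minimum over the summands. To obtain total order $\geq|\alpha|$ I would check directly that the two summands vanish on the same $S_{n+1,j}$ except possibly at $j=i,i+1$. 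There exactly one of them vanishes, but the sum vanishes on neither, while its order is still controlled. Concretely, $|\delta_i(\alpha)\cap\delta_{i+1}(\alpha)|=|\alpha|-1$. I need to count carefully here. If the sum $x_{\delta_i(\alpha)}+x_{\delta_{i+1}(\alpha)}$ vanishes only on the common $|\alpha|-1$ strata, the bound fails by one. So I will not use the naive definition of total order for sums, and instead measure $\ord$ through the weighting filtration of \cite{LoMe23}. There, the total-weight filtration is defined by monomials $x_{P,L}$ with $\sum|\alpha_i|\geq m$, and linear coordinates of weight $|\alpha|$ are preserved. With this reading, each $d_i$ with $i>0$ maps weight-$k$ coordinates to weight-$k$ linear combinations, so the filtration is preserved.

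\emph{The zeroth face.} Here Lemma \ref{lemma:local-d0} gives $x_{\alpha,\ell}\circ d_0=x_{\delta_0(\alpha),\ell}+\sum f_{P,L}x_{P,L}$ with $|P|\supset\delta_0(\alpha)\cup\{1\}$. The leading term has weight $|\alpha|$. Each correction monomial $x_{P,L}$ has weight $\sum_i|\alpha_i|\geq|\,|P|\,|\geq|\delta_0(\alpha)\cup\{1\}|=|\alpha|+1$, because $1\notin\delta_0(\alpha)$. So $d_0$ also preserves the weight filtration on coordinates.

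\emph{General cochains.} For $f\in C^\infty(G_n)$ of order $m$, Proposition \ref{prop:generators} lets us write $f$, locally and then globally via a partition of unity, as a combination of monomials $x_{P,L}$ with weight $\geq m$, with smooth coefficients. Pulling back along any $d_i$, the coefficients stay smooth and each monomial becomes a product of pulled-back coordinates. Each factor has weight $\geq$ its original weight by the previous steps, and weight is additive under products. Hence $f\circ d_i$ has order $\geq m$. Summing over $i$ with signs, the filtration is a linear subspace, so $\ord(\delta f)\geq\ord(f)$.

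The main obstacle is the positive-face step: reconciling the naive vanishing-order definition of $\ord$ for sums such as $x_{\delta_i\alpha}+x_{\delta_{i+1}\alpha}$ with the weighting description. I would first prove that, in frame coordinates, ``total order $\geq m$'' coincides with ``lies in the ideal spanned by monomials of weight $\geq m$''. This follows from the iterated expansion in Proposition \ref{prop:generators}, which lowers $\ord(\cdot,S_{n,j})$ by exactly one per extracted factor with $j\in\alpha$. With that identification in hand, all the remaining steps are formal.
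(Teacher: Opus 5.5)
Your main line is the argument the paper intends. The corollary is stated without proof right after Lemma \ref{lemma:local-d0}, and it follows as you say from three facts:
\begin{itemize}
\item Lemma \ref{lemma:faces-degeneracies}: positive faces send a weight-$|\alpha|$ coordinate to a sum of weight-$|\alpha|$ coordinates.
\item Lemma \ref{lemma:local-d0}: the $d_0$ corrections are monomials $x_{P,L}$ with $|P|\supset\delta_0(\alpha)\cup\{1\}$, hence of weight $\geq|\alpha|+1$.
\item Multiplicativity and linearity give the result for $\delta$.
\end{itemize}
You are also right that this works only if $\ord$ is read as the weight filtration $F_m$. This is the filtration you define via monomials, and it is the ``total weighting'' the paper's remark attributes to \cite{LoMe23}. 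It cannot be the literal sum $\sum_j\ord(f,S_{n,j})$. For the nerve of $(\R,+)$, $x\circ d_1=x_1+x_2$ has literal total order $0<1$. Likewise $f=x^3$ gives $\delta f=-3x_1x_2(x_1+x_2)$, of literal total order $2<3$.

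The step that fails is the one in your last paragraph. There you plan to prove that ``total order $\geq m$'', in the literal sense, coincides with membership in $F_m$, and then treat everything else as formal. That identification is false, so it cannot be proved. Only one inclusion holds: if $\ord(f,S_{n,j})\geq a_j$ with $\sum_j a_j\geq m$, then iterating Proposition \ref{prop:generators}(a) puts $f$ in $F_m$. The converse fails, and the literal sublevel set is not even a linear subspace. On $G_2=\R^2$, both $x_1^2x_2$ and $x_1x_2^2$ have literal total order $3$, but their sum $x_1x_2(x_1+x_2)$ has literal total order $2$. So that set cannot equal any ideal spanned by monomials. Your own sum $x_{\delta_i(\alpha),\ell}+x_{\delta_{i+1}(\alpha),\ell}$ is another counterexample: it lies in $F_{|\alpha|}$ but has literal order $|\alpha|-1$.

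The fix is to drop the reconciliation entirely, since none is possible when the literal statement is false. Define $\ord(f)$ as the largest $m$ with $f\in F_m$. Your three steps (positive faces, $d_0$, general cochains via $F_aF_b\subset F_{a+b}$ and linearity) then prove both claims of the corollary as intended.
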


Given a sequence $P=(\alpha_1,\dots,\alpha_s)$,  $|\alpha_i|=k_i$, and given $\beta:[n]\to[m]$ we write 
$\beta(P)=(\beta(\alpha_1),\dots,\beta(\alpha_s))$.
Given $(P,L)=((\alpha_1,\ell_1),\dots,(\alpha_s,\ell_s))$ a labeled sequence and $\beta$ injective on each $\alpha_t$, we denote by $(\beta(P),L)=((\beta(\alpha_1),\ell_1),\dots,(\beta(\alpha_s),\ell_s))$ the sequence $\beta(P)$ with the induced labeling.



\begin{proposition}\label{prop:cup-product}
Given $x_{P,L}\in C_N^\infty(G_p)$ and $x_{P',L'}\in C_N^\infty(G_q)$ generating monomials, where $(P,L)$ and $(P',L')$ are labeled partitions, their cup product mod $J$ is given by 
$$x_{P,L} \cup  x_{P',L'}  \equiv_J
x_{(\delta_{p+1})^q(P),L}x_{(\delta_{0})^p(P'),L'}$$
\end{proposition}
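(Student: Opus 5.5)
By the definition of the cup product, $x_{P,L}\cup x_{P',L'}=(x_{P,L}\circ (d_{p+1})^q)\cdot(x_{P',L'}\circ (d_0)^p)$ as functions on $G_{p+q}$. Since pullback is multiplicative, it suffices to compute each normal coordinate factor. The plan is to expand $x_{\alpha,\ell}\circ(d_{p+1})^q$ and $x_{\alpha',\ell'}\circ(d_0)^p$ into normal monomials, multiply, and check that every term except the stated one is a normalized monomial whose sequence is a covering with overlap. By Proposition \ref{prop:generators}(c), such terms lie in $J_{p+q}$.

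\textbf{Front face.} The front face uses only positive faces. Iterating Lemma \ref{lemma:faces-degeneracies}, each $d_i$ with $i\geq p+1$ and $i\notin\alpha$ sends $x_{\alpha,\ell}$ to $x_{\delta_i(\alpha),\ell}$. Here $\alpha\subset\{1,\dots,p\}$, and at each step $i=p+1,\dots,p+q$ we have $i\notin$ the current index set, since that set stays inside $\{1,\dots,p\}$. Hence $x_{\alpha,\ell}\circ (d_{p+1})^q=x_{(\delta_{p+1})^q(\alpha),\ell}$ exactly, with $(\delta_{p+1})^q(\alpha)=\alpha$ viewed inside $\{1,\dots,p+q\}$. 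So the first factor is exactly $x_{(\delta_{p+1})^q(P),L}$, whose index sets partition $\{1,\dots,p\}$.

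\textbf{Back face.} Iterating Lemma \ref{lemma:local-d0} $p$ times gives
$$x_{\alpha',\ell'}\circ (d_0)^p = x_{(\delta_0)^p(\alpha'),\ell'}+\sum f_{Q,M}\,x_{Q,M}.$$
Each correction monomial $x_{Q,M}$ has $|Q|\supset(\delta_0)^p(\alpha')\cup\{1\}$. To iterate, I will use that $(d_0)^*$ preserves order via Corollary \ref{cor:deltaord}, and that composing with $d_0$ sends monomials $x_{Q,M}$ with $1\in|Q|$ to combinations whose support still contains the shifted set together with $1$. This needs some bookkeeping but is of the same type as Lemma \ref{lemma:local-d0}. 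The leading term $x_{(\delta_0)^p(P'),L'}$ has index sets partitioning $\{p+1,\dots,p+q\}$.

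\textbf{Collecting terms.} Multiplying, the leading product is $x_{(\delta_{p+1})^q(P),L}\,x_{(\delta_0)^p(P'),L'}$, whose sequence is a partition of $\{1,\dots,p+q\}$. Every other term contains $x_{(\delta_{p+1})^q(P),L}$, which already covers $\{1,\dots,p\}$, times a product whose supports contain $(\delta_0)^p(\alpha'_t)$ for every $t$ together with at least one correction monomial containing the index $1$. The sequence of each such term therefore covers $\{1,\dots,p+q\}$ and overlaps at $1$, so the term lies in $J_{p+q}$ by Proposition \ref{prop:generators}(c). The coefficient functions are harmless, since $J$ is an ideal in $C^\infty(G_{p+q})$.

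\textbf{Main obstacle.} The delicate point is the iteration of Lemma \ref{lemma:local-d0}, namely controlling $|Q|$ under repeated $d_0$. If the bookkeeping becomes messy, I would instead argue by vanishing orders. I would show that the difference
$$x_{\alpha',\ell'}\circ(d_0)^p - x_{(\delta_0)^p(\alpha'),\ell'}$$
vanishes on $S_{p+q,j}$ for all $j\in\{1,\dots,p\}\cup(\delta_0)^p(\alpha')$. On $S_{p+q,j}$ with $j\leq p$, the back face is insensitive, since $(d_0)^p s_{j-1}=(d_0)^{p-1}$ on the appropriate simplices by the simplicial identities. Then Proposition \ref{prop:generators}(a) gives an expansion in monomials vanishing on each of these $S_{p+q,j}$. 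Combined with the first factor vanishing on every $S_{p+q,j}$ with $j\leq p$, each correction term vanishes quadratically on some $S_{p+q,j}$ with $j\leq p$ and is normalized, hence lies in $J_{p+q}$ by definition.
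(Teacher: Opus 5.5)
Your route is the paper's route. You compute the front factor exactly with Lemma \ref{lemma:faces-degeneracies}, expand the back factor by iterating Lemma \ref{lemma:local-d0}, and discard the non-leading terms as coverings with overlap via Proposition \ref{prop:generators}(c).

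The gap is in the invariant you carry through the iteration. Composing with $d_0$ does not keep the index $1$ in the support. By Lemma \ref{lemma:local-d0}, the leading term of $x_{Q,M}\circ d_0$ is $x_{\delta_0(Q),M}$, and its support $\delta_0(|Q|)$ does not contain $1$.

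For a concrete example, take the nerve of a non-abelian Lie group. Write an $n$-simplex by its edges $a_1,\dots,a_n$ out of vertex $0$, so that $d_0(a)=(a_2a_1^{-1},\dots,a_na_1^{-1})$ and the positive faces drop an $a_i$. Use the frame $a_k=\exp(y_1+\dots+y_k)$. Then on $G_3$
\[
x_{\{1\},\ell}\circ(d_0)^2=\ell\bigl(\log(e^{y_1+y_2+y_3}e^{-y_1-y_2})\bigr)=\ell(y_3)-\tfrac12\ell([y_3,y_1])-\tfrac12\ell([y_3,y_2])+\cdots,
\]
and the last quadratic term has support $\{2,3\}$.

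Your fallback fails for the same reason. For $j\le p$ you correctly get $(d_0)^ps_{j-1}=(d_0)^{p-1}$. But also $x_{(\delta_0)^p(\alpha'),\ell'}\circ s_{j-1}=x_{(\delta_0)^{p-1}(\alpha'),\ell'}$. So the difference restricted to $S_{p+q,j}$ is the same kind of difference one level down, not zero. It is nonzero in general once $p\ge 2$: in the example it equals $-\tfrac12\ell([y_3,y_2])+\cdots$ on $S_{3,1}$. (The paper's displayed expansion, with $|\tilde P|\supset|(\delta_0)^p(P')|\cup\{1\}$, has the same slip when $p\ge2$. Only the weaker property below is actually used.)

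The repair is to carry only what is needed. Claim: by induction on $k$, the difference $x_{\alpha',\ell'}\circ(d_0)^k-x_{(\delta_0)^k(\alpha'),\ell'}$ is a combination of terms $f\,x_{Q,M}$ with $|Q|\supset(\delta_0)^k(\alpha')$ and $|Q|\cap\{1,\dots,k\}\neq\emptyset$.
\begin{itemize}
\item The case $k=1$ is Lemma \ref{lemma:local-d0}.
\item For the inductive step, the leading term is handled by Lemma \ref{lemma:local-d0} again.
\item A correction term, with $(Q,M)=((\beta_1,m_1),\dots,(\beta_r,m_r))$, composes to $(f\circ d_0)\prod_t(x_{\beta_t,m_t}\circ d_0)$. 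Each factor is a combination of monomials whose support contains $\delta_0(\beta_t)$.
\item Hence every resulting monomial has support containing $\delta_0(|Q|)$. This set contains $(\delta_0)^{k+1}(\alpha')$ and meets $\delta_0(\{1,\dots,k\})=\{2,\dots,k+1\}$.
\end{itemize}
Geometrically, this says the difference vanishes on $\bigcap_{j\le p}S_{p+q,j}=(s_0)^p(G_q)$, where $(d_0)^p(s_0)^p=\id$, rather than on each $S_{p+q,j}$ separately.

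With $k=p$, every non-leading term of your product contains a factor whose index set meets $\{1,\dots,p\}$. The front factor already covers $\{1,\dots,p\}$, so the term is a covering with overlap and lies in $J_{p+q}$. The rest of your argument then goes through unchanged.
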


\begin{proof}
By definition of the cup product, we have
$x_{P,L} \cup  x_{P',L'}=
(x_{P,L}(d_{p+1})^q) (x_{P',L'}(d_{0})^p)$.
By Lemma \ref{lemma:faces-degeneracies}, we have 
$x_{P,L}(d_{p+1})^q=x_{(\delta_{p+1})^q(P),L}$.
By iterations of Lemma \ref{lemma:local-d0}, we have
$$x_{P',L'}(d_0)^p= 
x_{(\delta_{0})^p(P'),L'}
+ \sum_{|\tilde P|\supset |(\delta_{0})^p(P')|\cup\{1\}}\sum_{\tilde L\in B_{\tilde P}}   f_{\tilde P,\tilde L}x_{\tilde P,\tilde L}$$
Since $|(\delta_{p+1})^q(P)|=\{1,\dots,p\}$, $|(\delta_{0})^p(P')|\cup\{1\}=\{1,p+1,\dots,p+q\}$, 
and the monomials with overlap belong to $J$, 
we conclude
$(x_{P,L}(d_{p+1})^q)(x_{P',L'}(d_0)^p)  \equiv_J
(x_{(\delta_{p+1})^q(P),L})(x_{(\delta_0)^p(P'),L'})$.
\end{proof}

Next we describe generators for $\delta(J)$. 
Recall that $J_{n-1}$ is generated by the monomials $x_{Q,L}=x_{\alpha_1,\ell_1}\dots x_{\alpha_s,\ell_s}$ of order $n$ where $Q=(\alpha_1,\dots,\alpha_s)$ is a covering with exactly one overlap at some $j\in\{1,\dots,n-1\}$.
Given such a covering $Q$, $j\in \alpha_r $, $j\in \alpha_{r'} $, $r<r'$, note that $\delta_{j}(\alpha_i)=\delta_{j+1}(\alpha_i)$ for every $i\neq r,r'$, while $\delta_{j}(\alpha_r)\neq\delta_{j+1}(\alpha_r)$ and $\delta_{j}(\alpha_{r'})\neq\delta_{j+1}(\alpha_{r'})$. 
The {\bf primitive partitions} $P,P'$ of $Q$ are the partitions of $\{1,\dots,n\}$ which are defined as follows:
%
$$P=(\delta_{j}(\alpha_1),\dots, \delta_{j}(\alpha_r),\dots,\delta_{j+1}(\alpha_{r'}),\dots,\delta_{j}(\alpha_s))$$
$$P'=(\delta_{j}(\alpha_1),\dots, \delta_{j+1}(\alpha_r),\dots,\delta_{j}(\alpha_{r'}),\dots,\delta_{j}(\alpha_s))$$
A label $L$ for $Q$ induces labels for $P$ and $P'$, which, by an abuse of notation, we also denote by $L$.

The $Q$ determines the $j$ and the two primite partitions $P,P'$. Conversely, given $P=(\alpha_1,\dots,\alpha_s)$ a partition of $\{1,\dots,n\}$ and given $j$ such that $j,j+1$ do not belong to the same $\alpha_i$, we can recover $P'$ and $Q$. Writing $\tau_j$ for the transposition of $j$ and $j+1$, we have
$P'=\tau_j(P)=(\tau_j(\alpha_1),\dots,\tau_j(\alpha_s))$ and $Q=\sigma_j(P)$, and we say that $\tau_j$ is an {\bf allowed transposition} for $P$.  
If $L$ is a label for $P$ then it induces a label for both $\tau_j(P)$ and $Q$ in the obvious way.
Thus, $P$ and $P'$ are the primitive partitions of some $Q$ if and only if they are related by an allowed transposition $\tau_j$ for $P$,  $P'=\tau_j(P)$.


\begin{lemma}\label{lem:gendeltaJgen}
Let $x_{Q,L}=x_{\alpha_1,\ell_1}\dots x_{\alpha_s,\ell_s}$ be a generator of $J_{n-1}$ of order $n$ with overlap at $j\in\{1,\dots,n-1\}$. Then 
$$\delta(x_{Q,L})\equiv_{\Jgen_{n}}
(-1)^j(x_{P,L}+x_{\tau_j(P),L})$$
where $P$ and $\tau_j(P)$ are the primitives of $Q$. 
Therefore, $\hat J$ is spanned by $J$ and the sums $x_{P,L}+x_{\tau_j(P),L}$, where $P$ is a partition and $\tau_j$ an allowed transposition.
\end{lemma}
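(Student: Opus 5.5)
We work in the frame coordinates fixed above, where $G_n=\nu(G,G_0)_n$, the positive faces are given by Lemma \ref{lemma:faces-degeneracies}, and $d_0$ is controlled by Lemma \ref{lemma:local-d0}. We expand
$$\delta(x_{Q,L})=\sum_{i=0}^{n}(-1)^i\,x_{Q,L}\circ d_i,$$
where $x_{Q,L}\circ d_i=\prod_t x_{\alpha_t,\ell_t}\circ d_i$, and we treat each term modulo $\Jgen_n$.

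\emph{Positive faces.} For $0<i\le n$, Lemma \ref{lemma:faces-degeneracies} says that each factor $x_{\alpha_t,\ell_t}\circ d_i$ equals either $x_{\delta_i(\alpha_t),\ell_t}$ or $x_{\delta_i(\alpha_t),\ell_t}+x_{\delta_{i+1}(\alpha_t),\ell_t}$. Multiplying out, we get a sum of normal monomials whose supports cover $\{1,\dots,n\}$ minus possibly some points. The order count is the key constraint. The total order of $x_{Q,L}$ is $n$, and each resulting monomial has total order $n$ as a cochain on $G_n$. A normalized monomial on $G_n$ without overlap has order exactly $n$ and is a partition. So a monomial of order $n$ that is a covering is either a partition, or has an overlap and then misses nothing. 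In the latter case it lies in $\Jgen_n$ by Proposition \ref{prop:generators}(c).

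Non-normalized monomials must cancel in the alternating sum, since $\delta(x_{Q,L})$ is normalized. We would make this precise by projecting $\delta(x_{Q,L})$ onto the span of monomials with given support and using that $C_N$ is spanned by covering monomials. Alternatively, we would verify the cancellation directly by grouping faces $d_i,d_{i+1}$, using $\delta_{i+1}(\alpha)=\delta_i(\alpha)$ when $i\notin\alpha$.

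\emph{Which partitions survive.} A partition of $\{1,\dots,n\}$ appears only when the face $d_i$ separates the overlap point, which forces $i=j$. Then the two overlapping factors $x_{\alpha_r}$ and $x_{\alpha_{r'}}$ each split into two terms. Among the four products, the terms $\delta_j(\alpha_r)$ with $\delta_{j+1}(\alpha_{r'})$, and $\delta_{j+1}(\alpha_r)$ with $\delta_j(\alpha_{r'})$, give exactly $P$ and $\tau_j(P)$. The other two products overlap at $j$ or at $j+1$, and the remaining factors are unchanged, since $\delta_j=\delta_{j+1}$ on them. For $i\neq j$, the image of $d_i$ preserves the overlap, so every resulting monomial is in $\Jgen_n$ or non-covering. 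This yields the coefficient $(-1)^j$.

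\emph{The $d_0$ term.} Here we use Lemma \ref{lemma:local-d0} factorwise. We get $x_{\alpha_t,\ell_t}\circ d_0=x_{\delta_0(\alpha_t),\ell_t}+(\text{terms with support}\supset\delta_0(\alpha_t)\cup\{1\})$. Since $1\notin\delta_0(\alpha_t)$ for all $t$ while $\delta_0(\alpha_r)$ and $\delta_0(\alpha_{r'})$ still share $j+1$, every product retains an overlap. Thus the $d_0$ term is in $\Jgen_n$ after discarding non-normalized pieces as above. This is the main obstacle: the correction functions $f_{P,L}$ are not monomials, and we must check that the total expression lies in $\Jgen_n$, not merely that each monomial lying in $C_N$ does. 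We would handle this by Proposition \ref{prop:generators}(a),(c). The sum of non-normalized parts coming from all faces is itself normalized, because $\delta(x_{Q,L})$ and the partition terms are. It has total order $\ge n+1$ by Corollary \ref{cor:deltaord} together with the overlap, hence it lies in $\Jgen_n$.

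\emph{Spanning statement.} Finally, $\hat J=J+\delta(J)$. By Proposition \ref{prop:generators}(c) and the Leibniz rule, $\delta(J)$ is spanned modulo $J$ by $\delta$ of generators of order $n$ with a single overlap, times smooth coefficients. Generators of higher order have $\delta$ in $J$ by Corollary \ref{cor:deltaord}, and $\delta(f x)$ differs from $f\,\delta(x)$ by terms of the same type. By the bijection between such $Q$ and pairs $(P,\tau_j)$ with $\tau_j$ an allowed transposition, described just before the statement, this gives the claimed generators $x_{P,L}+x_{\tau_j(P),L}$.
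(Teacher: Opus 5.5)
The face-by-face bookkeeping is sound and follows the paper's case split: for $0<i\neq j$ every product keeps an overlap, and for $i=j$ exactly the two cross terms give $P$ and $\tau_j(P)$ with sign $(-1)^j$. The gap is the step you yourself flag. You conclude that $R=\delta(x_{Q,L})-(-1)^j(x_{P,L}+x_{\tau_j(P),L})$ lies in $J_n$ because it ``has total order $\geq n+1$ by Corollary~\ref{cor:deltaord} together with the overlap''. That corollary only gives total order $\geq n$. Moreover, the overlaps of the summands sit in different places: $x_{Q,L}\circ d_i$ vanishes quadratically along $S_{n,j+1}$ for $i<j$ (including $i=0$) and along $S_{n,j}$ for $i>j$, and the two non-cross products from $d_j$ split between these two submanifolds. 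The order of a sum along $S_{n,k}$ is only bounded below by the minimum over its summands. So nothing you cite forces $R$ to have order $\geq 2$ along any single $S_{n,k}$. For instance, on $G_2\cong\R^2$ for the nerve of $\R$, the function $x_1^2x_2+x_1x_2^2$ has total order $2$, although each summand has total order $3$. Nor can you apply ``normalized of total order $\geq n+1$ implies $J_n$'' summand by summand, since the individual $x_{Q,L}\circ d_i$ are not normalized. (Minor point: a covering of total order $n$ is automatically a partition, so your ``has an overlap and then misses nothing'' case never occurs.)

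The paper closes this step with the Taylor criterion from the Corollary following Proposition~\ref{prop:generators}. A normalized function lies in $J_n$ if and only if its partition Taylor coefficients $(\partial_{x_{\alpha_1,\ell_1}}\cdots\partial_{x_{\alpha_s,\ell_s}}f)|_{G_0}$ vanish. These coefficients are linear and defined on all of $C^\infty(G_n)$, so they can be computed face by face. A function with quadratic vanishing along some $S_{n,k}$ has no such coefficients, because a partition has only one block containing $k$. The quadratic vanishing for $i\neq j$ comes from $d_is_j=s_{j-1}d_i$ when $i<j$ and $d_is_{j-1}=s_{j-1}d_{i-1}$ when $i>j$; this also covers $d_0$ without Lemma~\ref{lemma:local-d0}.

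You could also repair it inside your own expansion. Write $x_{Q,L}\circ d_0=\prod_t x_{\delta_0(\alpha_t),\ell_t}+C_0$. Every product in $C_0$ contains at least one correction factor, which vanishes on $S_{n,1}$. The factors still vanish on $S_{n,k}$ for $k\in\bigcup_t\delta_0(\alpha_t)=\{2,\dots,n\}$, so each such product is normalized with overlap at $j+1$. It therefore lies in the ideal $J_n$, and the smooth coefficients $f_{P,L}$ are harmless. Then $R-C_0$ is a normalized fiberwise polynomial all of whose monomials are non-coverings. Such a polynomial is zero: restrict to $S_{n,k}$ for an index $k$ that a given monomial misses. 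Hence $R=C_0\in J_n$.
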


\begin{proof}
Let $r,r'$ be such that  $j\in \alpha_r$, $j\in \alpha_{r'}$, $r<r'$.
Since $x_{Q,L}$ is normalized, so is $\delta(x_{Q,L})$ and we can focus on the terms in the Taylor expansion corresponding to the partitions.
By definition, 
$\delta(x_{Q,L})=\sum_{i=0}^{n} (-1)^i x_{Q,L}\circ d_i$.
%
We study each term $x_{Q,L}\circ d_i$ by splitting in cases.
If $i<j$, since $d_is_j=s_{j-1}d_i$, using Lemma \ref{lemma:faces-degeneracies}, we have that $x_{\alpha_r,\ell_r}d_i$ and $x_{\alpha_{r'},\ell_{r'}}d_i$ vanish at $S_{n,j+1}$. It follows that $x_{Q,L}d_i$ has higher vanishing at $S_{n,j+1}$, and therefore, the Taylor expansion along the fibers of $x_{Q,L}d_i$ does not contribute to any partition. The case $i>j$ is similar, now using the identity $d_is_{j-1}=s_{j-1}d_{i-1}$, so $x_{\alpha_r,\ell_r}d_i$ and $x_{\alpha_{r'},\ell_{r'}}d_i$ vanish at $S_{n,j}$.
Finally, consider the case $i=j$. Again by Lemma \ref{lemma:faces-degeneracies} we have
$$x_{Q,L}\circ d_j=
(x_{\delta_j(\alpha_r),\ell_r}+x_{\delta_{j+1}(\alpha_r),\ell_r})
(x_{\delta_j(\alpha_{r'}),\ell_{r'}}+x_{\delta_{j+1}(\alpha_{r'}),\ell_{r'}})
\prod_{t\neq r,r'}x_{\delta_j(\alpha_t),\ell_t}$$
Out of the four terms, only the cross terms contribute to a partition, each of these cross terms corresponds to a primitive $P$ and $P'$, and the result follows.
\end{proof}


We know that the quotient $C_N^\infty(G_n)/J_n$ is spanned by the normal monomials $x_{P,L}$, with $(P,L)$ a labeled partition. 
Given $(P,L)=((\alpha_1,\ell_1),\dots,(\alpha_s,\ell_s))$ a labeled partition and $\theta$ a permutation of $\{1,\dots,s\}$, we write $(P^\theta,L^\theta)=((\alpha_{\theta(1)},\ell_{\theta(1)}),\dots,(\alpha_{\theta(s)},\ell_{\theta(s)}))$.
Clearly $x_{P,L}=x_{P^\theta,L^\theta}$, and by Lemma \ref{lem:gendeltaJgen}, $x_{P,L}\equiv_{\hat J_n}-x_{\tau_j(P),L}$. 
Thus, to get generators for $C_N(G_n)/\hat J_n$, it is enough to take one monomial on each class defined by the equivalence relation $\sim$ spanned by:
\begin{enumerate}[$\bullet$]
\item $(P,L)\sim(P^\theta,L^\theta)$, where $\theta$ is a permutation of $\{1,\dots,s\}$
\item $(P,L)\sim(\tau_j(P),L)$, where $\tau_j$ is an allowed transposition.
\end{enumerate}

Given an integer partition $\lambda=(k_1,\dots,k_s)
\vdash n$,
namely a non-decreasing sequence $k_1,\dots,k_s$ of positive integers whose sum is $n$, we define the {\bf canonical} partition $P_\lambda=(\alpha_1,\dots,\alpha_s)$ by 
$$\alpha_1=\{1,\dots,k_1\} \quad \alpha_2=\{k_1+1,\dots,k_1+k_2\} \quad \dots\quad  \alpha_s=\{n-k_s+1,\dots,n \}$$

\begin{proposition}\label{prop:canonical-partition}
Given $(P,L)=((\alpha_1,\ell_1),\dots,(\alpha_s,\ell_s))$ a labeled partition, there is a labeled canonical partition $(P_\lambda,L')$ such that $(P,L)\sim(P_\lambda,L')$. Moreover, $P_\lambda$ is unique.
\end{proposition}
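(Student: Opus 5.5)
Existence comes from a bubble-sort argument on the sequence of blocks, and uniqueness from an invariant of the relation $\sim$ that reads off the integer partition $\lambda$. Both generating moves preserve the multiset of block sizes: a reordering $\theta$ only permutes the blocks, and an allowed transposition $\tau_j$ exchanges $j$ and $j+1$, which lie in different blocks, so $|\tau_j(\alpha_i)|=|\alpha_i|$ for all $i$. Hence the multiset $\{|\alpha_1|,\dots,|\alpha_s|\}$, sorted non-decreasingly, is an invariant $\lambda(P)\vdash n$ of the $\sim$-class. Since $\lambda(P_\lambda)=\lambda$, the canonical partition in the class of $(P,L)$ is forced to be $P_{\lambda(P)}$, which proves uniqueness. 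The labels play no role in this step: they are simply carried along by both moves, and $L'$ is the label induced along the chain of moves.

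For existence, encode a partition $P$ of $\{1,\dots,n\}$ by the word $w=w_1\cdots w_n$ with $w_t=i$ if $t\in\alpha_i$. An allowed transposition $\tau_j$ swaps adjacent letters $w_j\neq w_{j+1}$, and the condition that $j,j+1$ do not lie in the same block is exactly $w_j\neq w_{j+1}$. Reordering blocks by $\theta$ relabels the letters by a permutation.

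First I apply a reordering $\theta$ so that the blocks are listed with non-decreasing sizes $k_1\le\dots\le k_s$, breaking ties arbitrarily. Then I sort the word by adjacent swaps of distinct letters until it becomes $1^{k_1}2^{k_2}\cdots s^{k_s}$; this is ordinary bubble sort, which only ever swaps adjacent letters that are out of order and hence distinct. That sorted word is precisely $P_\lambda$ with $\lambda=(k_1,\dots,k_s)$, and the labels follow their blocks throughout.

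The only point needing care is bookkeeping rather than a real obstacle. I need that applying $\tau_j$ to the current partition is a legitimate $\sim$-step at every stage, and this holds because allowedness was checked on the current word. I also need that the induced labels compose correctly along the chain, which holds since each move transports $\ell_i$ with the block $\alpha_i$.
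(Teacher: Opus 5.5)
Your proposal is correct and follows essentially the same route as the paper. The paper also gets uniqueness from the invariance of the multiset of block sizes under $\sim$, and existence by first reordering the blocks by size and then applying allowed transpositions that strictly decrease the number of disordered pairs, which is the same bubble sort. Your word encoding and arbitrary tie-breaking, in place of the paper's tie-break by $\max\alpha_t$, are only cosmetic differences.
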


\begin{proof}
Write $k_i=|\alpha_i|$.
The collection $\{k_1,\dots,k_s\}$ remains constant over an equivalence class. After reordering, these are the numbers defining the integer partition $\lambda=(k_1,\dots,k_s)\vdash n$. We now describe an algorithm to {\it unravel} any partition by a sequence of allowed transpositions to get a canonical one. 
First, we use a permutation $\theta$ to get $(P,L)$, $P=(\alpha_1,\dots,\alpha_s)$, such for each $t$ either 
(i) $|\alpha_t|<|\alpha_{t+1}|$, or 
(ii)$|\alpha_t|=|\alpha_{t+1}|$ and 
$\max \alpha_t <\max \alpha_{t+1}$. 
Then, at each step of the algorithm, we seek the minimum $j$ such that $j\in \alpha_{r} $, $j+1\in \alpha_{r'} $, and $r'<r$. If there is no such $j$, then $P=P_\lambda$ is canonical, and we are done. If there is such $j$ then $\tau_j$ is an allowed transposition for $P$ and we replace $(P,L)$ by $(\tau_j(P),L)$,  completing one step of our algorithm. 
Each step reduces the number of disordered pairs
$$\#\{(i,j): i<j,\ i\in \alpha_r ,\ j\in \alpha_{r'} , \ r'<r \}$$
and therefore, after finitely many steps, we end up getting $(P_\lambda,L)$.
\end{proof}


\begin{corollary}\label{cor:generators}
The algebra $C_N^\infty(G)/{\hat J}$ is  graded commutative and spanned, as a graded $C^\infty(G_0)$-algebra, by the monomials $x_{\iota_n,\ell}$, where $n\geq 1$, $\iota_n=\{1,\dots,n\}$, and $\ell\in \Gamma(A_n^*)$.
\end{corollary}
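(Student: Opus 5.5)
The plan has two parts: first prove that the classes $x_{\iota_n,\ell}$ generate, then prove graded commutativity. Throughout I work in a frame, which Corollary \ref{cor:framemodJ} allows, since modulo $J_n\subset\hat J_n$ nothing is lost.

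\emph{Generation.} By Proposition \ref{prop:generators}(b), $C_N^\infty(G_n)$ is spanned over $C^\infty(G_0)$ by normal monomials $x_{P,L}$ with $P$ a covering. By part (c) the coverings with overlap lie in $J_n$, so modulo $\hat J_n$ we only need labeled partitions $(P,L)$. By Lemma \ref{lem:gendeltaJgen} and the relation $\sim$, each $x_{P,L}$ equals $\pm x_{P_\lambda,L'}$ modulo $\hat J_n$, where $P_\lambda$ is the canonical partition given by Proposition \ref{prop:canonical-partition}. The sign is $(-1)^{\#\text{allowed transpositions}}$, and reordering factors costs no sign because the functions commute pointwise.

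So it remains to express a canonical monomial as a cup product. For $\lambda=(k_1,\dots,k_s)$, I apply Proposition \ref{prop:cup-product} iteratively:
\[
x_{\iota_{k_1},\ell_1}\cup\cdots\cup x_{\iota_{k_s},\ell_s}\equiv_J x_{P_\lambda,L}.
\]
Indeed, $(\delta_{p+1})^q$ fixes the front block $\{1,\dots,p\}$, and $(\delta_0)^p$ shifts $\{1,\dots,q\}$ to $\{p+1,\dots,p+q\}$, so the blocks are placed consecutively. Coefficients in $C^\infty(G_0)$ are handled by noting that $f\circ\pi=\pi^*f\cup(\cdot)$ is compatible with cup products, because the front vertex is preserved. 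Hence the classes $x_{\iota_n,\ell}$ generate $C_N^\infty(G)/\hat J$ as a $C^\infty(G_0)$-algebra.

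\emph{Graded commutativity.} It suffices to check it on generators, using associativity of $\cup$. Let $a=x_{\iota_p,\ell}$ and $b=x_{\iota_q,\ell'}$. By Proposition \ref{prop:cup-product}, $a\cup b\equiv x_{P,L}$ with $P=(\{1..p\},\{p+1..p+q\})$, and $b\cup a\equiv x_{P',L'}$ with $P'=(\{1..q\},\{q+1..p+q\})$. After swapping the two factors, which is harmless since the product is pointwise, $P'$ has the first block labeled $\ell'$ and the second labeled $\ell$. We pass from $P$ to the swapped $P'$ by moving each of the $q$ elements of the second block past each of the $p$ elements of the first block. This uses $pq$ allowed transpositions: at each step $j$ and $j+1$ lie in different blocks. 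Each transposition contributes a factor $-1$ by Lemma \ref{lem:gendeltaJgen}, so $a\cup b\equiv(-1)^{pq}\,b\cup a$.

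The main care point is the bookkeeping in Proposition \ref{prop:cup-product}. One must check that the intermediate monomials of iterated cup products are genuine partitions and that the reduction modulo $J$ is legitimate at each stage. This holds because $J$ and $\hat J$ are two-sided ideals, so we may reduce after each cup product. One must also verify that each transposition used in the commutativity argument is allowed, which follows because throughout the process $j$ and $j+1$ always belong to distinct blocks.
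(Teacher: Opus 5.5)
Your proposal is correct and follows essentially the same route as the paper. Generation comes from Proposition~\ref{prop:generators}, the reduction to canonical partitions from Proposition~\ref{prop:canonical-partition}, and the identification $x_{P_\lambda,L}\equiv_J x_{\iota_{k_1},\ell_1}\cup\dots\cup x_{\iota_{k_s},\ell_s}$ from Proposition~\ref{prop:cup-product}; graded commutativity then follows from the $pq$ allowed transpositions and Lemma~\ref{lem:gendeltaJgen}, with your extra remarks on $C^\infty(G_0)$-coefficients and on reducing modulo $J$ after each cup product making explicit what the paper's terser proof leaves implicit.
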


\begin{proof}
By Propositions \ref{prop:generators}, the normal monomials $x_{P,L}$ span the quotient $C_N^\infty(G)/{ J}$, and by Proposition \ref{prop:canonical-partition}, the $x_{P_\lambda,L}$ with $\lambda=(k_1,\dots,k_s)\vdash n$ span the quotient $C_N^\infty(G)/{\hat J}$. Finally,
by Proposition \ref{prop:cup-product}, such a normal monomial can be written as a cup product, namely 
$$x_{P_\lambda,L}\equiv_J x_{\iota_{k_1},\ell_1}\cup\dots\cup x_{\iota_{k_s},\ell_s}.$$ 
To show that $C_N^\infty(G)/{\hat J}$ is graded commutative, it is enough to pick two generators 
$x_{\iota_{p},\ell}$, $x_{\iota_{q},\ell'}$, and show that
$x_{\iota_{p},\ell}\cup x_{\iota_{q},\ell'}\equiv(-1)^{pq}x_{\iota_{q},\ell'}\cup x_{\iota_{p},\ell}$, or equivalently, that $x_{\{1,\dots,p\},\ell}x_{\{p+1,\dots,p+q\},\ell'} \equiv 
(-1)^{pq} x_{\{1,\dots,q\},\ell'}x_{\{q+1,\dots,p+q\},\ell}$. 
We can go from the partition $(\{1,\dots,p\},\{p+1,\dots,p+q\})$ to $(\{q+1,\dots,p+q\},\{1,\dots,q\})$
through $pq$ allowed transpositions, and conclude by Lemma \ref{lem:gendeltaJgen}.
\end{proof}


\subsection{Higher Lie algebroids and the Differentiation Theorem}

In this subsection, we review the definition of a higher Lie algebroid and prove Theorem \ref{thm:2}, our main result, stating that the quotient $C_N(G)/\hat J$ is semi-free, and therefore the Chevalley-Eilenberg algebra of a thus defined higher Lie algebroid.


Given $C=(\bigoplus_{k\geq0}C^k,\wedge,d)$ a commutative differential graded algebra, we write $C^{\geq1}=\bigoplus_{k\geq1}C^k$ for the {\bf augmentation ideal}, and $Q(C)=C^{\geq1}/(C^{\geq1}\wedge C^{\geq1})$ for the {\bf module of indecomposables}, which can also be seen as the associated graded algebra for the filtration
$$F_kC=\{\text{subalgebra generated by $C^{\leq k}$}\}$$
Then $Q(C)^k$ are the degree $k$ indecomposables and the induced $d:Q(C)^k\to Q(C)^{k+1}$ is $C^0$-linear. A {\bf splitting} for $C$ is a family of $C^0$-linear sections $\iota_k:Q(C)^k\to C^k$ inducing a graded algebra isomorphism $\iota:S(Q(C))\xto\sim C$ from the symmetric algebra of the module of indecomposables. If such a splitting exists, then $C$ is called {\bf semi-free}.

\begin{definition}
Let $M$ be a manifold and $A=\oplus_{n\geq 1} A_n$ be a graded vector bundle over $M$. A {\bf higher Lie algebroid} $(A\to M,C)$ on $A$ is given by a semi-free differential graded algebra $C$ with $C^0=C^\infty(M)$ and whose module of indecomposables is $\Gamma(A^*)$, namely $Q(C)^k=\Gamma(A_k^*)$. In this case, $C=CE(A)$ is called the {\bf Chevalley-Eilenberg algebra} of the higher Lie algebroid.
\end{definition}

 By an abuse of notation, we often write just $A$ for the higher Lie algebroid. Such an $A$ is a {\bf Lie $m$-algebroid} if $A_k=0$ for $k>m$. A morphism of higher Lie algebroids $(A\to M,C)\to (A'\to M',C')$ consists of a vector bundle map $A\to A'$ together with an extension to a dga morphism $C'\to C$.

The {\bf tangent complex} $(A,\partial)$ of $(A\to M,C)$ is the chain complex of vector bundles defined by dualizing the differential on the indecomposables, $\partial:A_k\to A_{k-1}$, $\langle \partial(x),[f]\rangle=\langle x,[d(f)]\rangle$, 
$x\in\Gamma(A_k)$, $f\in \Gamma(A_{k-1}^*)=Q(C)^{k-1}$. 
The {\bf anchor} $\partial:A_1\to TM$ is a canonical augmentation given by $L_{\partial (x)}(f) =\langle x,d(f)\rangle$, $x\in \Gamma(A_1)$, $f\in C^\infty(M)$.

\begin{example}
Lie 0-algebroids are the same as manifolds, and Lie 1-algebroids are ordinary Lie algebroids, as defined classically. An important example of Lie 2-algebroid is that of a Courant algebroid $E\to M$, whose (un-augmented) tangent complex is $E^*\to E$, with the map given by the inner product on $E$. Higher Lie algebroids also appear, for instance, in \cite{holonomy} as resolutions of certain singular foliations.
\end{example}

\begin{remark}
By fixing a splitting, we can (non-canonically) encode $C=CE(A)$ as a degree 1 differential $d$ in $S(\Gamma A^*)$. Some authors define a higher Lie algebroid structure on $A$ as a dga $C$ that is isomorphic to $S(\Gamma(A^*))$. We are further demanding that the isomorphism is the identity on the indecomposables. By Serre-Swan, a semi-free differential graded $C^\infty(M)$-algebra $C$ is a Chevalley-Eilenberg algebra if each $Q(C)^k$ is finitely generated and projective.
\end{remark}

\begin{remark}[Supergeometric description]\label{rmk:superA}
A Higher Lie algebroid $A$ can be equivalently described as $\N$-graded supermanifold $\M_A$, such that $C^\infty(\M_A)=CE(A)$, and together with a homological vector field corresponding to the differential on $CE(A)$ (see \cite{JMP} and references therein). From this viewpoint, the existence of a splitting isomorphism is related to the existence of an atlas of graded coordinates.
\end{remark}


We are now in a position to state our differentiation theorem.

\begin{theorem}[Differentiation]\label{thm:2}
Let $G$ be a simplicial manifold, $A_G$ its tangent complex, and $\J\subset C_N(G)$ its differentiating ideal. Then, $C_N(G)/\J$ is a semi-free commutative differential algebra with indecomposables $\Gamma(A^*_G)$, hence uniquely defining a higher Lie algebroid on $A_G$ with Chevalley-Eilenberg algebra $CE(A_G)=C_N(G)/\hat J$. A frame $\phi$ for $G$ induces a splitting for $A_G$:
$$\phi_\star:S(\Gamma A_G^*)\to 
CE(A_G) \qquad \ell 
\in\Gamma A_n^*\mapsto [x_{\iota_n,\ell}\phi_n^{-1}] \in C(G_n)/\J_n$$
\end{theorem}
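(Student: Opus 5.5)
Fix a frame $\phi$ via Theorem \ref{thm:1}. By Corollary \ref{cor:framemodJ}, $\phi_n^*$ identifies $C_N^\infty(G_n)/\Jgen_n$ with the corresponding quotient for the linear model $\nu(G,G_0)_n\cong(A_K)_n$. The positive faces and degeneracies are intertwined by $\phi$, but $d_0$ is not, so $\delta$ does not transport strictly. Hence I will not transport $\delta$. I only transport the span and independence statements, which involve $J$, $\cup$ and the combinatorics of partitions. By Corollary \ref{cor:deltaord}, $\delta$ preserves total order, so the class $\hat J_n=J_n+\delta(J_{n-1})$ is computed degreewise from order-$n$ terms. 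Lemma \ref{lem:gendeltaJgen} and Proposition \ref{prop:cup-product} were proven in frame coordinates, so they hold for the actual $G$ once the coordinates are pulled back by $\phi_n^{-1}$. That these lemmas only use the positive faces, degeneracies and Lemma \ref{lemma:local-d0}, valid in any frame, is the point to double-check.

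\textbf{Surjectivity of $\phi_\star$.} The map $\phi_\star$ is a well-defined $C^\infty(G_0)$-algebra map from the free graded commutative algebra, because $C_N(G)/\hat J$ is graded commutative by Corollary \ref{cor:generators}. It is surjective by the same corollary, since the classes $x_{\iota_n,\ell}$ generate.

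\textbf{Injectivity.} This is the main obstacle. In degree $n$, $S(\Gamma A^*)_n$ is $\bigoplus_{\lambda\vdash n}$ of graded-symmetric tensors $S^{\lambda}$. I would build an explicit linear functional on $C_N^\infty(G_n)/J_n\cong\bigoplus_{P}\Gamma(A^*_{k_1})\otimes\cdots\otimes\Gamma(A^*_{k_s})$, the sum running over well-ordered partitions $P$. The functional sends the component of $P$ to its image in the graded-symmetric product, weighted by a sign $\epsilon(P)$. Here $\epsilon(P)$ is the sign of the shuffle permutation of $\{1,\dots,n\}$ that takes the unravelled order of Proposition \ref{prop:canonical-partition} to $P$, with reorderings of blocks producing the Koszul signs.

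Two things must be checked. First, $\epsilon$ is well defined, i.e.\ independent of the unravelling path. Second, $\epsilon(\tau_j P)=-\epsilon(P)$ for every allowed transposition. This holds because an allowed transposition swaps two elements in different blocks, changing the shuffle sign by $-1$. It is consistent with the Koszul sign for swapping blocks, since that sign equals $(-1)^{pq}$, the parity of the number of element transpositions, as in the proof of Corollary \ref{cor:generators}.

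Consequently the functional kills $x_{P,L}+x_{\tau_jP,L}$, and it kills $J$ by construction, so by Lemma \ref{lem:gendeltaJgen} it vanishes on $\hat J_n$. Composed with $\phi_\star$ it is the identity on canonical partitions, because by Proposition \ref{prop:cup-product} $\phi_\star(\ell_1\cdots\ell_s)\equiv x_{P_\lambda,L}$. This gives a left inverse, so $\phi_\star$ is an isomorphism. The uniqueness part of Proposition \ref{prop:canonical-partition} is what makes this work.

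\textbf{Indecomposables and conclusion.} Semi-freeness follows from the isomorphism $\phi_\star$. The module of indecomposables $Q^n$ is the $\lambda=(n)$ summand, namely $\Gamma(A_n^*)$. This identification is canonical and does not depend on the frame: the class of $f$ in $Q^n$ is the functional $v\mapsto \partial^n f/\partial v\cdots$ along $A_n\subset TG_n|_{G_0}$, i.e.\ the normal derivative in the direction of $A_n$. Changing the frame only alters $x_{\iota_n,\ell}$ by decomposable terms, because $D\phi_n$ is the identity on normal bundles. Therefore $\phi_\star$ is the identity on indecomposables and is a splitting. The differential is inherited from $\delta$, which preserves $\hat J$ since $\hat J$ is a differential ideal. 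Hence $C_N(G)/\hat J$ is a semi-free cdga with $C^0=C^\infty(G_0)$ and $Q^k=\Gamma(A_k^*)$, and therefore defines a higher Lie algebroid on $A_G$.
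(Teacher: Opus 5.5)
Your argument is correct and has the same skeleton as the paper's proof. Surjectivity of $\phi_\star$ comes from Corollary \ref{cor:generators}, and injectivity rests on the relations $x_{P,L}\equiv_{\hat J}-x_{\tau_j(P),L}$ of Lemma \ref{lem:gendeltaJgen} being compatible with graded commutativity.

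\textbf{Where you differ.} The difference is in how injectivity is concluded.
\begin{itemize}
\item The paper argues at the level of presentations. It reduces to a canonical $P_\lambda$ and tracks chains of allowed transpositions that return to $P_\lambda$ up to a block permutation $\theta$. Such a chain is trivial when $\theta=\id$, and swaps two equal blocks with sign $(-1)^{k_t^2}$ when $\theta=\tau_t$. It concludes that all relations among canonical monomials are graded-commutativity relations.
\item You construct an explicit $C^\infty(G_0)$-linear retraction $\Psi:C_N^\infty(G_n)/\hat J_n\to S(\Gamma A^*)_n$ with $\Psi\circ\phi_\star=\id$. This turns the presentation argument into a direct verification. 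The same sign bookkeeping reappears as the well-definedness of $\epsilon$.
\end{itemize}

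\textbf{Well-definedness of $\epsilon$.} This check is immediate if you define $\epsilon(P)$ directly as the sign of the permutation that lists $\alpha_1,\dots,\alpha_s$ one after another, each in increasing order.
\begin{itemize}
\item Reordering the blocks changes this sign by exactly the Koszul sign.
\item An allowed $\tau_j$ preserves the order inside each block, since $j$ and $j+1$ lie in different blocks. It therefore composes the permutation with a single transposition.
\end{itemize}
So no path-independence argument is needed.

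\textbf{Indecomposables.} Your frame-independent description of the indecomposables is a useful addition that the paper leaves implicit. It should read $f\mapsto df|_{A_n}$, a first derivative rather than an $n$-th one. This map is well defined because $J_n$, $\delta(J_{n-1})$ and decomposables all have vanishing differential along $G_0$.

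\textbf{Transporting $\delta$.} Your caution here is unnecessary. On the linear model, replace $d_0$ by $\phi_{n-1}^{-1}d_0\phi_n$. Then $\phi$ is an isomorphism of simplicial germs, so $\phi^*$ intertwines $\delta$ and $\cup$ exactly. Lemma \ref{lemma:local-d0} uses only the simplicial identities, so it holds for this $d_0$. This is exactly how the paper proceeds.
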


\begin{proof}
The quotient $C_N(G)/\J$ is a differential graded algebra, with $(C_N(G)/\J)_0=C^\infty(G_0)$, 
and by Corollary \ref{cor:generators} it is graded commutative, so we can define 
$\phi_\star:S(\Gamma A^*)\to 
CE(A_G)$ on the generators $\ell\in\Gamma A_n^*$ by $\phi_\star(\ell)=[x_{\iota_n,\ell}\phi^{-1}_n]$, and extend it to the whole symmetric algebra. 
This $\phi_\star$ is an epimorphism by Corollary \ref{cor:generators}.
It remains to prove that $\phi_\star$ is a monomorphism.

By Lemma \ref{lem:gendeltaJgen}, it is enough to show that the relation $x_{P,L}\equiv_{\hat J} - x_{\tau_j(P),L}$ induced by the allowed transpositions actually follows from graded commutativity. 
By Proposition \ref{prop:canonical-partition}, we can restrict to the case in which $P=P_\lambda$, $\lambda=(k_1,\dots,k_n)\vdash n$, and $(P,L)\sim(P,L^\theta)$, with $\theta$ a permutation of $\{1,\dots,s\}$. We want to compare the corresponding monomials $x_{P,L}$ and $x_{P,L^\theta}$. 
First, observe that $(\tau_j(P))^\theta=\tau_j(P^\theta)$. Thus, we can reach $(P,L^\theta)$ from $(P,L)$ by first performing a sequence of allowed transpositions and then a permutation, which is necessarily $\theta$: 
$$(P,L)\sim ((\tau_{i_1}\tau_{i_2}\dots\tau_{i_r}(P))^\theta,L^\theta)=(P,L^\theta)$$
If the $k_t$ are all distinct, then $\theta=\id$, for the ordering of the labeling is determined by that of $P$, and therefore, the product  $\tau_{i_1}\tau_{i_2}\dots\tau_{i_r}$ is also trivial, $r$ is even, and the signs coming from the allowed transposition cancel out. Suppose then that $k_t=k_{t+1}$ for some $t$, and suppose further that $\theta=\tau_t$ is the permutation of $\{1,\dots,s\}$ that swaps $t$ and $t+1$, 
the general case will then follow by writing any $\theta$ as a product of these transpositions.
The identity 
$\tau_{i_1}\tau_{i_2}\dots\tau_{i_r}(P)=P^{\tau_t}$ implies that the product $\tau_{i_1}\tau_{i_2}\dots\tau_{i_r}$ is the permutation in $\{1,\dots,n\}$ that swaps two consecutive blocks of size $k_t$, then its sign is $(-1)^{k_t^2}$, and the sign coming from the allowed transpositions yields
$x_{P,L}\equiv_{\hat J}(-1)^{k_t^2}x_{P,L^{\tau_t}}$. By Proposition \ref{prop:cup-product} this reads as 
$$x_{P_0,L_0}\cup
x_{\iota_{k_{t}},\ell_{t}}\cup x_{\iota_{k_{t+1}},\ell_{t+1}}\cup
x_{P_1,L_1}
\equiv
(-1)^{k_t}
x_{P_0,L_0}\cup
x_{\iota_{k_{t+1}},\ell_{t+1}}\cup x_{\iota_{k_{t}},\ell_{t}}\cup 
x_{P_1,L_1}
$$
where $(P_0,L_0)$ and $(P_1,L_1)$ come from the head and the tail of $P$ and $L$.
Since this relation indeed follows by graded commutativity, we conclude that $\phi_\star$ is injective and the result follows.
\end{proof}


Following Remark \ref{rmk:superA}, Higher Lie algebroids are also known as {\bf differential graded manifolds} and we write $dgMan$ for the corresponding category. Since our construction is clearly functorial, based on the previous theorem, we introduce the following definitions:

\begin{definition}
The {\bf higher Lie functor} associates to a simplicial manifold $G$ the higher Lie algebroid $(A_G,CE(A_G))$ with $CE(A_G)=C_N(G)/\hat J$.
$$\xymatrix{
   \text{sMan} \ar[rr]^{\text{Lie}} 
  & & \text{dgMan}
  }$$
At the level of cochains, we have an induced {\bf van Est map} defined as the quotient by $\J$, which is a natural epimorphism of differential graded algebras.
    $$\ve:C_N(G)\to C_N(G)/\J = CE(A_G)$$ 
\end{definition}
Notice that the tangent complex is preserved by the higher Lie functor and that, if $G$ is a Lie $m$-groupoid, then $A_G$ is a Lie $m$-algebroid. 

\begin{remark}[Germ differentiation]\label{rmk:germ-diff}
The functor $G\mapsto A_G$ only depends on the germ of $G$ around $G_0$. In fact, at each degre $n$, the differential $d:CE^n(A_G)\to CE^{n+1}(A_G)$ only depends on the $(n+1)$-jet of normalized functions and of the structure maps on $G_n$. Thus, we can define the differentiation of a simplicial germ $(G,G_0)$ around $G_0$.  
\end{remark}


The construction $G \mapsto A_G$ is invariant and does not depend on auxiliary choices. 
Given $v\in CE(A_G)$, we can compute $d_{CE(A_G)}(v)$ by finding a representative $f\in C_N^\infty(G)$, $[f]_{\J} =v$, so $d_{CE(A_G)}(v)=[\delta f]_{\J}$. But fixing a frame, and working locally, we can derive useful explicit local formulas for $d_{CE(A_G)}$ as in the following Remark.

\begin{remark}[The differential on generators]
Let $G$ be a simplicial manifold and $\phi$ a frame. Working locally on $G_0$, we can assume that we have a basis $\{e_n^i\}$ for each $\Gamma A_n^*$, and use them for labeling partitions. To ease the notation, given $\alpha\subset\{1,\dots,n\}$, write $(\alpha,i)=(\alpha,e^i_k)$, $x_\alpha^i=x_{\alpha,i}$, and $x_n^i=x_{\iota_n,i}$. 
By Lemmas \ref{lemma:faces-degeneracies} and \ref{lemma:local-d0}, 
since $\delta_0(\iota_n)\cup\{1\}=\{1,\dots,n+1\}$, and since the terms coming from $d_{i}$, $i>0$, cancel out with the first term in the expansion of $d_0$, we have:
$$ d[x_n^j] = 
\sum_{\substack{P=(\alpha_1,\dots,\alpha_s)\\\text{partition} }}
\sum_{\substack{I=(i_1,\dots,i_s)\\\text{labeling}}}  \left.\frac{\partial(x_{n}^j d_0)}{\partial x_{\alpha_1}^{i_1} \cdots \partial x_{\alpha_s}^{i_s}}\right|_{G_0} \ [x_{\alpha_1}^{i_1} \cdots x_{\alpha_s}^{i_s}]$$
The terms for $s=1$ above gives $\partial^*(x_{n})=x_n\partial$, where $\partial: A_{n+1} \to A_n$ is the differential on the tangent complex. 
For $s\geq 2$, following Proposition \ref{prop:canonical-partition}, any 
labeled partition is equivalent to a unique $(P_\lambda,I)$, 
where $\lambda=(k_1,\dots,k_s)\vdash n+1$ is a nontrivial integer partition, $k_t\leq k_{t+1}$, $k_1+\dots+k_s=n+1$, and $I=(i_1,\dots,i_s)$ is {\bf ordered}, in the sense that $i_t\leq i_{t+1}$ whenever $k_t= k_{t+1}$.
Writing $d_\phi=\phi_\star^{-1} \circ d \circ \phi_\star$ for the corresponding differential in $S(\Gamma(A^*))$, and writing $e_\lambda^I=e_{k_1}^{i_1} \cdots e_{k_s}^{i_s}$, we get:
$$d_\phi(e_n^j)=
\partial^*(e_n^j)+
\sum_{\substack{\lambda\vdash n+1\\ \text{nontrivial}\\\text{partition}}}
\sum_{\substack{I \text{ ordered}\\\text{labeling}}}
c^{\lambda}_{I} \ e^I_\lambda$$
The structure coefficients $c^\lambda_I$ are given by
$$c^{\lambda}_{I}=  
\sum_{\substack{(P,L) \text{ labeled partition}\\ (P,L)\sim (P_\lambda,I)}} (-1)^{(P,L)} \ \left.\frac{\partial(x_{n}^j d_0)}{\partial x_{\alpha_1}^{j_1} \cdots \partial x_{\alpha_s}^{j_s}}\right|_{G_0} \in C^\infty(G_0),
$$
where $(P,L)=((\alpha_1,j_1),\dots,(\alpha_s,j_s))$, and $(-1)^{(P,L)}$ is given by the number of allowed transpositions needed to get from $(P,L)$ to $(P_\lambda,I)$. 
From this formula, following \cite{BonPon}, we can deduce formulas for the underlying $m$-ary brackets $[\cdots]_{m}:\Gamma (\oplus_\bullet A_\bullet)^m \to \Gamma (\oplus_\bullet A_\bullet)$ in terms of the coefficients $c^\lambda_I$ given above. The higher Jacobi identity for such brackets is a highly nontrivial consequence of the simplicial identities on $G$, via $d^2=0$ in the invariantly defined $CE(A_G)$.
\end{remark} 


We explain here how our differentiation recovers the classical Lie algebroid of a Lie groupoid, exhibiting the classical Lie bracket as a second–order Taylor coefficient. 

\begin{proposition}\label{prop:gdcomparison}
Let $H\toto M$ be a Lie groupoid and $G$ its nerve. Then, the  classical differentiation $A_H$ and our simplicial differentiation $A_G$ are naturally isomorphic as Lie algebroids. Under this isomorphism, our van Est map $\ve$ also coincides with the classical one.
\end{proposition}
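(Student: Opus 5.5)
The plan is to compare both sides degree by degree, using Theorem \ref{thm:2} with a frame chosen adapted to the groupoid. For the nerve $G$ of $H\toto M$ we have $G_0=M$, $G_1=H$, and $A_1=\ker(Td_1)|_M=\ker(Ts)|_M$. This is the classical $A_H$, with the source convention matching $d_1$; otherwise we use the right-invariant convention. Moreover $A_n=0$ for $n\geq2$, because the horn maps $d_{n,0}$ are diffeomorphisms for $n\geq 2$ and hence $\nu_n=\nu_{n,0}$. By Theorem \ref{thm:2}, $CE(A_G)$ is then freely generated over $C^\infty(M)$ by $\Gamma(A_1^*)$ in degree one, so $CE(A_G)\cong \Gamma(\Lambda^\bullet A_1^*)$ as graded algebras. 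The anchors agree by Example \ref{ex:normal}: $\partial=(d_0)_*=Tt:A_1\to TM$. It remains to check that the differential on degree-one generators reproduces the classical bracket. The degree-zero part $d f=\delta f=f\circ d_0-f\circ d_1$ already gives $\langle df,a\rangle = L_{\rho(a)}f$, which fixes the anchor convention.

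For degree one, I take $\ell\in\Gamma(A_1^*)$ and represent it by a normalized $1$-cochain $f\in C^\infty(H)$ vanishing on $M$ with $df|_{A}=\ell$. I then compute $\delta f(g_2,g_1)=f(g_1)-f(g_2g_1)+f(g_2)$ modulo $\J$. Using the frame description of $C^\infty_N(G_2)/J_2$ from the Corollary after Proposition \ref{prop:generators}, its class is determined by the mixed second derivative along the partition $(\{1\},\{2\})$, that is, by the bilinear form $(a,b)\mapsto -\partial_a\partial_b (f\circ m)|_M$. Here $\{1\},\{2\}$ correspond to $g_1,g_2$ in the normal coordinates. Only the antisymmetrization matters, since by Lemma \ref{lem:gendeltaJgen} the class of $x_{(\{1\},\{2\}),(\ell_1,\ell_2)}$ is minus that of the transposed one. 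That antisymmetric part is the classical expression: extend $a,b$ to right-invariant vector fields $\vec a,\vec b$ on $H$ (the sections through $M$ of the frame) and write $(\vec a\vec b-\vec b\vec a)f|_M=\langle \ell,[a,b]\rangle$. Combining this with the first-order terms, I obtain the Koszul formula $d\ell(a,b)=\rho(a)\ell(b)-\rho(b)\ell(a)-\ell([a,b])$. Hence $CE(A_G)$ is the classical Chevalley–Eilenberg algebra of $A_H$, and by the definition of morphisms of higher Lie algebroids the identity on $A_1$ is an isomorphism of Lie algebroids. Naturality in $H$ is clear, since all constructions are functorial.

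For the van Est map, I recall that the classical map sends a normalized $p$-cochain $c$ to $\ve(c)(a_1,\dots,a_p)=\sum_{\sigma}\pm \vec a_{\sigma(1)}\cdots\vec a_{\sigma(p)}\,c$ restricted to $M$, with the vector fields acting on successive arrows. On our side, $\ve(c)=[c]$, and under $\phi_\star$ its image is read off from the $p$-th Taylor coefficients along partitions of $\{1,\dots,p\}$ into singletons. These are exactly the mixed derivatives $\partial_{x_{\{\sigma(1)\}}}\cdots\partial_{x_{\{\sigma(p)\}}}c|_M$, with signs given by Proposition \ref{prop:canonical-partition}, which also matches the cup-product formula of Proposition \ref{prop:cup-product}. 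Both maps are dga morphisms (the classical one by \cite{crainic}) and both are $C^\infty(M)$-linear in the appropriate sense. It therefore suffices to compare them on degree-one cochains, where both give $df|_{A}$, and to check compatibility with the product structure.

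I expect the main obstacle to be the sign and convention bookkeeping in the degree-one computation. One must check that the mixed derivative of $f\circ m$ at units, for a chosen frame, has antisymmetric part exactly the bracket of right-invariant extensions, independently of the frame. I would do this by choosing the frame $\phi_2(a,b)=(\phi_1(a)\text{-translated},\phi_1(b))$ built from the exponential-type tubular structure of $\phi_1$. With that choice the mixed derivative becomes a commutator of flows. For a general $p$-cochain, the subtlety is the same iterated version, which I would reduce to degree one by multiplicativity of both van Est maps.
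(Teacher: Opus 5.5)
Your proposal is correct and follows the same skeleton as the paper. Both arguments:
\begin{itemize}
\item identify $CE(A_G)\cong\Gamma(\Lambda A^*)$ because everything is generated in degrees $0$ and $1$, with the degree-one identification given canonically by the normal derivative along $M$;
\item note that the anchors agree because the augmented tangent complexes agree;
\item reduce the Lie algebroid comparison to the differential on degree-one generators;
\item treat $\ve$ through low degrees plus cup products.
\end{itemize}

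The genuine difference is the coordinate system for the degree-one computation. The paper works in frame coordinates $e^i_1,e^j_2$ on $G_2\cong H\,{}_s\!\times_s H$. There $d_1,d_2$ are linear, so $d_G(e^k)(e_i,e_j)$ is the antisymmetrized mixed derivative of $e^k\circ d_0$ alone, with $d_0$ the division map. It then matches this with $-[X_i,X_j](e^k)$ by manipulating flows of right-invariant fields.

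You instead expand all of $\delta f=f\circ d_0-f\circ d_1+f\circ d_2$ in arrow coordinates $(u,v)$. Here $u$ is the $\phi_1$-coordinate of $g_1$ and $v$ that of $g_2$ over $t(g_1)$. The bracket then comes at once from $-\partial_u\partial_v(f\circ m)|_M=-\vec a\vec b f|_M$. The anchor terms $\rho(a)\ell(b)$ come from $f(g_2)$ through the base point $s(g_2)=t(g_1)$. Your route avoids flow identities. The paper's stays inside the frame formalism of Theorem \ref{thm:2}, where $d_0$ alone carries the structure.

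Caveats:
\begin{itemize}
\item Your sentence that $\{1\},\{2\}$ ``correspond to $g_1,g_2$ in the normal coordinates'' is false as stated. By Lemma \ref{lemma:faces-degeneracies}, $x_{\{1\}}\circ d_2=x_{\{1\}}$ and $x_{\{1\}}\circ d_1=x_{\{1\}}+x_{\{2\}}$. So $x_{\{2\}}$ is the coordinate of $g_2g_1$ minus that of $g_1$, not the coordinate of $g_2$. In genuine frame coordinates, $\partial\partial(f\circ m)$ is the symmetric fiber Hessian of $f\circ\phi_1$ and contributes nothing to the bracket.
\item Your computation survives because it can be justified in one line. Componentwise, $v-x_{\{2\}}$ vanishes on both $S_{2,1}$ and $S_{2,2}$. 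By Proposition \ref{prop:generators} it is then a combination $\sum h\,x_{\{1\}}x_{\{2\}}$. Hence $x_{\{1\}}v\equiv x_{\{1\}}x_{\{2\}}$ modulo $J_2$, and mixed second derivatives at $M$ of normalized $2$-cochains agree in both coordinate systems. The analogous remark, using that $v_k-x_{\{k\}}$ vanishes on $S_{n,k}$ and on $\bigcap_{j\neq k}S_{n,j}$, is what lets you read $\ve$ in arrow coordinates.
\item Your plan to ``choose the frame $\phi_2$'' is not available. For a nerve, $A_2=0$ and $d_{2,0}$ is a diffeomorphism, so $\phi_2$ is forced by $\phi_1$. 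A map sending $(a,b)$ to a translate of $\phi_1(a)$ paired with $\phi_1(b)$ generally fails $d_1\phi_2=\phi_1d_1$. You do not need it.
\item For $\ve$, multiplicativity plus agreement in degrees $0,1$ only pins the maps down on the subalgebra generated in those degrees. That subalgebra is all of $C_N(G)$ only modulo $J$. You also need the classical map to kill $J$, hence $\hat J$, since it is a cochain map. Your mixed-derivative description supplies exactly this.
\end{itemize}
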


\begin{proof}
Consider the map $C_N(G_1)\to \Gamma A_H^*$ given by taking normal derivative to $M$ inside $H$. It is a clear that it induces an isomorphism of graded algebras $\Gamma(\Lambda A_H^*)\cong\Gamma(\Lambda A_G^*)$ since both are generated in degrees zero and one. We need to show that it commutes with the underlying differentials $d_H$, $d_G$. Both constructions have the same augmented tangent complex, hence the same anchor, so it remains to show that $d_H=d_G$ in degree 1.
Fix a frame $\phi$ in $G$, which is the same as a tubular structure in $H$, in the sense of \cite{cms}. 
Working locally, we can choose a local basis $e_i$ of $A_G$. This yields local coordinates $e^i$ in $G_1=H$ around $G_0$, and $e^i_1,e^j_2$ in $G_2=H _s\times_s H$ around $G_0$. Write $X_i$ for the right-invariant vector fields corresponding to $e_i$, and $\omega^i$ for the dual basis of right-invariant 1-forms. 
By the right invariance, we have 
$Fl^t_{X_i}(Fl_{X_j}^s(x))=Fl_{X_i}^t(y)Fl_{X_j}^s(x)$,
where $y=Fl_{\rho X_j}^s(x)$, and therefore, $Fl^s_{X_j}(x)^{-1}=Fl^{-s}_{X_j}(y)$. At the same time, the map $d_0:G_2\to G_1$ is the division map $d_0(g_2,g_1)=g_2g_1^{-1}$, where \smash{$z\xfrom{g_2} y\xfrom{g_1} x$}. Then we have, with $y=Fl_{\rho X_j}^s(x),\ z=Fl_{\rho X_i}^t(x)$,
\begin{align*}
d_H(\omega^k)(X_i,X_j)(x) 
    &= -[X_i,X_j](e^k)(x) \\
    &= -\left.\frac{\partial^2}{\partial t\partial s}\right|_{0}(e^k Fl^t_{X_i}(Fl_{X_j}^s(x))- 
    e^k Fl_{X_j}^s(Fl_{X_i}^t(x)) \\
    &= -\left.\frac{\partial^2}{\partial t\partial s}\right|_{0}(e^k d_0(Fl_{X_i}^t(y),Fl_{X_j}^{-s}(y)) - 
    e^k d_0(Fl_{X_j}^s(z),Fl_{X_i}^{-t}(z)) 
        && \\
    & = \left.\frac{\partial^2}{\partial t\partial s}\right|_{0}(e^k d_0(Fl_{X_i}^t(x),Fl_{X_j}^{s}(x)) - 
    e^k Fl_{X_j}^s(x)Fl_{X_i}^{t}(x))\\
    & =\left.\frac{\partial^2}{\partial e^{i}_{1}\partial e^{j}_{2}}\right|_{x}(e^k d_0)-
\left.\frac{\partial^2}{\partial e^{i}_{2}\partial e^{j}_{1}}\right|_{x}(e^k d_0)\\
    & = d_G(e^k)(e_i,e_j)(x)
\end{align*}
The last statement about $\ve$ follows from it holding in degrees zero and one by construction, and by direct verification on elements of the form $f_1\cup \dots \cup f_m$ with $f_j\in C^\infty_N(H)$ using the definitions.
\end{proof}

This computation shows that the Lie algebroid bracket is extracted functorially as the antisymmetric second derivative of the division map 
$d_0$. While this mechanism is implicit in classical constructions via invariant vector fields, the simplicial viewpoint isolates the bracket directly at the level of second jets of the groupoid structure. 


\subsection{Differentiation at the level of forms}\label{subsec:forms}

In this subsection, we review the Bott-Shulman complex of a simplicial manifold, the Weil algebra of a higher Lie algebroid, and mimic the previous constructions to get an extension of our differentiation theorem for forms instead of cochains.


Given $G$ a simplicial manifold, its {\bf cosimplicial de Rham algebra} is the cosimplicial differential graded algebra $(\Omega(G),d_i^*,s_j^*)$, see \cite{behrend,bss}. This yields a bi-differential graded algebra
$\Omega(G) = (\Omega^q(G_p), \cup, \delta, d)$
with horizontal simplicial differential $\delta:\Omega^q(G_p)\to \Omega^q(G_{p+1})$, $\delta(\omega)=\sum_{i=0}^{p+1} (-1)^i d_i^*\omega$, and vertical de Rham differential $d:\Omega^q(G_p)\to \Omega^{q+1}(G_p)$, $d(\omega)=d_{dR}(\omega)$, 
and the cup product analogous to the one defined for cochains,
$\alpha\cup \beta = (-1)^{qp'} (d^*_{p+1})^{p'}(\alpha). (d^*_0)^p(\beta)$, $\alpha\in \Omega^q(G_p)$, $\beta\in \Omega^{q'}(G_{p'})$. 
$$\Omega(G) \quad = \quad \begin{matrix}  
\xymatrix@R=15pt@C=15pt{ 
   \vdots & \vdots  & \vdots  & \reflectbox{$\ddots$} \\
  \Omega^2(G_0) \ar[u]  \ar[r] & \Omega^2(G_1) \ar[u]  \ar[r] & \Omega^2(G_2) \ar[u]  \ar[r] & \dots \\
  \Omega^1(G_0) \ar[u]  \ar[r] & \Omega^1(G_1) \ar[u]^d  \ar[r]^\delta   & \Omega^1(G_2) \ar[u]  \ar[r] & \dots  \\
  \Omega^0(G_0) \ar[u]  \ar[r] & \Omega^0(G_1) \ar[u]  \ar[r]  & \Omega^0(G_2) \ar[u]  \ar[r] &  \dots  
} \end{matrix} $$

A form $\omega\in \Omega^q(G_p)$ is {\bf normalized} if its pullback to each $S_{p,j}$ vanishes, namely $s_{j-1}^*(\omega)=0$ for $j=1,\dots,p$. The normalized forms are closed under the cup product, the horizontal and the vertical differential, hence they define a sub-bi-dga $\Omega_N(G)$. 

\begin{example}
If $G$ is a 0-groupoid, namely a constant simplicial manifold, then this double complex is homotopic to the usual de Rham complex. If $G$ is a Lie groupoid, then this construction is explored in detail by several authors, including \cite{behrend,bss,ac11}, among others. 
\end{example}

\begin{remark}
Given a simplicial manifold $G$, its {\bf shifted tangent} $\T G$ is the simplicial algebroid obtained by applying the tangent functor degreewise. 
We can identify the cosimplicial de Rham algebra with the degreewise Chevalley-Eilenberg algebra, $\Omega(G)\cong CE(\T G)$. 
\end{remark}


Let us now review the infinitesimal version of the above construction. 
Given $A$ a higher Lie algebroid, write $C=CE(A)$ for its Chevalley-Eilenberg algebra, $Der(C)$ for the $C$-module of graded derivations, and $\Omega^1_C=Hom_C(Der(C),C)$. 

\begin{definition}
The {\bf Weil algebra} of $A$ is the differential bigraded algebra with total space
$$W(A)
= \bigoplus_{k \ge 0} Hom_C(\Lambda_C^k Der(C), C)
\cong
\Lambda_C(\Omega^1_C).
$$
The vertical grading is $k$ above, the vertical differential on elements $f \in C$ with $k=0$ is given by
$d_v(f)(X) = X(f)$, $X \in Der(C)$,
and $d_v$ extended as a $(0,1)$-graded derivation. The horizontal grading comes from the internal grading of $Der(C)$ in a standard way and the horizontal $(1,0)$-differential $d_h=[j(d_C),d_v]$ is given by the graded commutator with the contraction operator $j(d_C)$ by $d_C \in Der(C)$.
%
Seen as a dga with the total grading, $W(A)$  turns out to be semi-free, and therefore itself a Chevalley-Eilenberg algebra of an underlying {\bf shifted tangent} $\T A$ of $A$,
$CE(\T A) = W(A)$.
\end{definition}

\begin{remark}[Local structure]\label{rmk:localW}
There is a short exact sequence of $C$-modules 
which we now recall:
$$0 \to \Gamma(A)\otimes_{C^\infty(M)}C
\to Der(C)
\xto\alpha
\mathfrak X_M\otimes_{C^\infty(M)}C
\to 0,$$ Since $A$ is degreewise locally free, we can identifty $\Gamma(A)\otimes_{C^\infty(M)}C\cong Hom_{C^\infty(M)}(\Gamma(A^*),C)$ and 
$\mathfrak X_M\otimes_{C^\infty(M)}C\cong Hom_{C^\infty(M)}(\Omega^1(M),C)$.
Then $\alpha(X)(df) = X(f)$, and if $\alpha(X)=0$, then the derivation $X:C\to C$ is $C^\infty(M)$-linear, and it is completely determined by its value on the indecomposables. 
Dualizing over $C$, we obtain the short exact sequence
$$0 \to \Omega^1_M \otimes_{C^\infty(M)} C
\to \Omega^1_C
\to \Gamma(A^*) \otimes_{C^\infty(M)} C
\to 0$$
A choice of splitting for this sequence yields a local description of the shifted tangent. If $\{\xi_n^{\ell}\}$ are generators for $\Gamma A^*$ of degree $n$ over an open set $U \subset M$, then $\T A$ is
locally spanned by $\Omega^1_U$ and the $\xi_1^{\bullet}$ in total degree $1$, and by $\xi_n^{\bullet}$ together with
$d_v\xi_{n-1}^{\bullet}$ in total degree $n$. See, for instance, \cite{JMP}.
\end{remark}

\begin{example}
The Weil algebra of a manifold, viewed as a 0-algebroid, is its de Rham complex. If $A=\h$ is a Lie algebra, then $W^{p,q}(\h)=\Lambda^{p-q} \h^* \otimes S^q \h^*$ reproduces the classical construction. More generally, the Weil algebra of a Lie 1-algebroid has been studied by Abad-Crainic \cite{ac11} and Li Bland-Meinrenken \cite{LBM}, among others.
\end{example}

\begin{remark}\label{rmk:superWA}
From the point of view of supergeometry, recall that $A$ corresponds to an $\N$-graded supermanifold $\mathcal{M}_A$ with  $C^\infty(\mathcal{M}_A)=CE(A)$ and $d$ seen as a vector field $v$. Then, $W(A)= \Omega(\mathcal{M}_A)$ corresponds to differential forms, the horizontal differential $d_h=L_v$ is the Lie derivative along $v$, and the vertical differential $d_v=\d$ is the de Rham differential. Also notice that $\M_{\T A}=T[1]\M_A$ yields the shifted tangent bundle. More details and other explicit descriptions in terms of classical geometry can be found in \cite{FRS11,JMP}. (See also \cite{cuesch}.)
\end{remark}


Given $G$ a simplicial manifold, write $A=A_G$ for its higher Lie algebroid introduced in the previous subsection. We will show that the cosimplicial de Rham algebra $\Omega(G)$ and the Weil algebra $W(A)$ are related by a differentiation process completely analogous to the one for functions. For each $n>0$, let $I^\Omega_{n,j}=\ker s^*_{j-1}\subset \Omega(G_n)$ be the differential ideal given by the forms $\omega$ such that $s_{j-1}^*\omega=0$.

\begin{definition}
A normalized form $\omega\in \Omega_N(G_n)$ has {\bf higher vanishing} at $S_j$ if $\omega\in (I^\Omega_{n,j})^2$. The forms with higher vanishing at some $S_j$ span the degree $n$ higher vanishing ideal $J^\Omega_n$. The {\bf differentiating ideal} $\hat J^\Omega\subset \Omega_N(G)$ is the differential ideal spanned by $J^\Omega=\bigoplus_n J^\Omega_n$.
$$ J^\Omega_n = \bigoplus_j \Omega_N(G_n)\cap (I^\Omega_{n,j})^2 \subset \Omega_N(G_n) \qquad J^\Omega=\bigoplus_n J^\Omega_n \qquad \hat J^\Omega=J^\Omega+\delta(J^\Omega)$$
\end{definition}

Just as for $J$, it is easy to see that $J^\Omega$  is closed under the cup product so that, indeed, $J^\Omega$ only needs to be closed under $\delta$.  If $s: N\subset M$ is a closed embedded submanifold and $K=\ker s^*:\Omega(M)\to\Omega(N)$ is the ideal of forms with vanishing restriction, then $K$ is spanned, as a differential graded ideal, by its degree 0 part.
The proof is similar to that of Proposition \ref{prop:generators}. 
In particular, $\Omega_N(G_n)/J^\Omega_n$ localizes around $G_0\subset G_n$. Thus, we can fix a frame, work on the linear model, and consider normal coordinates $x_{\alpha,\ell}$, where $\alpha\subset\{1,\dots,n\}$ and $\ell\in\Gamma A_{|\alpha|}^*$. Then 
$$x_{\alpha,\ell}\in I^\Omega_{n,j} \iff dx_{\alpha,\ell}\in I^\Omega_{n,j} \iff j\in\alpha$$
It follows that $I^\Omega_{n,j}$ is spanned as a $C^\infty(G_0)$-algebra by the 0-forms $x_{\alpha,\ell}$ and the 1-forms $dx_{\alpha,\ell}$,  
$\Omega_{N}(G_n)$ is spanned by the normal monomials 
$$x_{P,L}dx_{P',L'}=x_{\alpha_1,\ell_1}\dots x_{\alpha_r,\ell_r}dx_{\alpha'_1,\ell'_1}\dots dx_{\alpha'_{r'},\ell'_{r'}}$$
with the juxtaposition $P\ast P'$ being a covering, and $J^\Omega_n$ is spanned by the normal monomials $x_{P,L}dx_{P',L'}$ with $P\ast P'$ a covering with overlap.


Let us denote $\ve^\Omega: \Omega_N(G) \to \Omega_N(G)/\J^\Omega$ the quotient map.
From the previous subsection, in form degree zero we have that $\ve^\Omega$ restricts to $\ve:C_N(G_p) \to CE^p(A_G)=W^{p,0}(A_G)$. Taking into account de Rham differential on $\Omega_N(G)$ and the vertical differential $d_v$ on $W(A_G)$, we can induce an identification $\Omega_N(G)/\J^\Omega \simeq W(A_G)$ on higher form degrees $q>0$ by $\ve^\Omega(d\omega) \simeq d_v \ve^\Omega(\omega)$ and by respecting the products, starting from $\ve^\Omega(f)=\ve(f)$ for $0$-forms $\omega=f$.
To illustrate this identification, consider local generators $\{\xi^\ell_n\}$ of degree $n$ for $CE(A_G)$, as introduced in Remark \ref{rmk:localW}. Writting them as $\xi^\ell_n = \ve(x_{\iota_n,\ell})$ for suitable normal coordinates, then 
\[\ve^\Omega(x_{P,L} dx_{P',L'}) \simeq \xi_{n_1}^{\ell_1} \cdots \xi_{n_r}^{\ell_r} d_v \xi_{n'_1}^{\ell'_1} \cdots d_v \xi_{n'_{r'}}^{\ell'_{r'}} \in W^{p,r'}(A_G) \]
for $P,P'$ canonical partitions with $P\ast P'$ covering $\{1,\dots,p=r+r'\}$, with $x_{P,L} dx_{P',L'}$ as described above, and where the product on the right is the one on $W(A_G)$. It is interesting to note that the commutation relations $x_{\iota_n,\ell}\cup x_{\iota_m,\ell'} \equiv_{\J} (-1)^{nm} x_{\iota_m,\ell'}\cup x_{\iota_n,\ell}$ found in the previous subsection for $0$-forms now get complemented by 
\[ dx_{\iota_n,\ell}\cup dx_{\iota_m,\ell'} \equiv_{\J^\Omega} (-1)^{(n+1)(m+1)} dx_{\iota_m,\ell'}\cup dx_{\iota_n,\ell}, \  \]
where now $\cup$ is induced by $\wedge$ in $\Omega(G)$. The proof of these facts follow straightforwardly from the $0$-form case.

Finally, since $\ve$ is surjective and $W^{p,0}(A)$ generates all of $W(A)$ under product and vertical differential, we thus obtain the following extension of the differentiation theorem for cochains.

\begin{corollary}\label{cor:thm2forms}
Let $G$ be a simplicial manifold and $\hat J^\Omega \subset \Omega(G)$ its differentiating ideal at the level of forms. Then, there is a natural isomorphism 
of differential bigraded algebras, 
$$\Omega^q_N(G_p)/\hat J^\Omega \simeq W^{p,q}(A_G)$$
where $W(A_G)$ is the Weil algebra of the higher Lie algebroid $A_G$ of $G$.
\end{corollary}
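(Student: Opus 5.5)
We prove the isomorphism by constructing a surjective map of bigraded dgas from $\Omega_N(G)/\hat J^\Omega$ onto $W(A_G)$ and then checking injectivity by a rank count in local normal coordinates, exactly parallel to the proof of Theorem \ref{thm:2}.

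\emph{Reduction to a local linear model.} First we localize. Since $\ker s_{j-1}^*$ is generated as a differential ideal by its degree $0$ part, the argument of Lemma \ref{lem:Jgenlocal} applies verbatim to forms: a normalized form vanishing near $G_0$ lies in $J^\Omega$. Hence a frame $\phi$ from Theorem \ref{thm:1} gives isomorphisms $\phi_n^*:\Omega_N(G_n)/J^\Omega_n\cong\Omega_N(\nu_n)/J^\Omega_n$, compatible with the positive faces and the degeneracies. We may therefore work on the Dold--Kan model with normal coordinates $x_{\alpha,\ell}$.

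\emph{Structure of the quotient.} Using the spanning description stated just before the corollary, $\Omega_N(G_n)/J^\Omega_n$ is spanned over $\Omega(G_0)$ by the monomials $x_{P,L}\,dx_{P',L'}$ with $P\ast P'$ a partition of $\{1,\dots,n\}$. The form-degree $0$ part of $\Omega(G_0)$ is $C^\infty(G_0)$, and its higher-degree part supplies the $\Omega^1_M$ generators.

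Next we compute $\delta$ on generators of $J^\Omega$. Lemma \ref{lem:gendeltaJgen} carries over by applying $d$: since $d$ commutes with the $d_i^*$, differentiating the congruence $\delta(x_{Q,L})\equiv \pm(x_{P,L}+x_{\tau_jP,L})$ gives the analogous relation for monomials containing $dx$ factors. The only change is that a $dx$ factor carries form degree one, which shifts the Koszul sign. Running the unraveling algorithm of Proposition \ref{prop:canonical-partition} then shows that the quotient is spanned by canonical monomials. These factor, as in Proposition \ref{prop:cup-product}, as cup products of the elements $x_{\iota_k,\ell}$ and $dx_{\iota_k,\ell}$ together with forms on $G_0$, and they satisfy the graded commutation rules displayed before the corollary. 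Consequently the map
$$\Psi:\Lambda_{C}(\Omega^1_C)\to\Omega_N(G)/\hat J^\Omega,\qquad \xi^\ell_n\mapsto[x_{\iota_n,\ell}],\quad d_v\xi^\ell_n\mapsto[dx_{\iota_n,\ell}],\quad \Psi|_{\Omega(M)}=\mathrm{id},$$
defined using the local splitting of Remark \ref{rmk:localW}, is a well-defined surjective algebra map.

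\emph{Intrinsic description and compatibility with differentials.} It is better to define the map intrinsically and avoid dependence on the splitting. Since $\hat J^\Omega$ is a differential ideal, the quotient is a bigraded dga. Its form-degree $0$ part is $CE(A_G)$ by Theorem \ref{thm:2}, and it is generated by degree $0$ and $d$ of degree $0$. We then use the universal property of $W(A)=\Lambda_C\Omega^1_C$, which is the free dg algebra on $C$ with a vertical derivation $d_v$ (Kähler forms of the semi-free $C$). From the inclusion $C\to\Omega_N/\hat J^\Omega$ in form degree $0$ and the derivation $d$, this yields a canonical map $W(A_G)\to\Omega_N(G)/\hat J^\Omega$. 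It intertwines $d_v$ with $d$ by construction. It also intertwines $d_h=[j(d_C),d_v]$ with $\delta$, because $\delta$ commutes with $d$ and agrees with $d_C$ in form degree $0$; a derivation commuting with $d_v$ is determined by its values on $C$.

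\emph{Injectivity (main obstacle).} The hardest step is injectivity. We must show that no extra relations appear among the canonical monomials mixing $x$ and $dx$ factors, and that the signs produced by allowed transpositions agree with the Koszul signs of $W$. We plan the same argument as in Theorem \ref{thm:2}. Reduce to a canonical partition $P\ast P'$ and a relabeling permutation $\theta$. Swapping two adjacent blocks of size $k$ of type $x$ gives the sign $(-1)^{k^2}$; two blocks of type $dx$ give the sign $(-1)^{(k+1)^2}$; a mixed swap gives $(-1)^{k(k'+1)}$. Each of these matches graded commutativity in $W$, where $\xi_k$ has total degree $k$ and $d_v\xi_k$ has total degree $k+1$. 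Hence every relation in the quotient already holds in $W$, so $\Psi$ is injective. A complementary check is a dimension count of the Taylor coefficients as in the corollary on $T_{\le n}$: these coefficients are $\Omega(G_0)$-independent on well-ordered partitions, and this matches the local free basis of $W$ from Remark \ref{rmk:localW}.
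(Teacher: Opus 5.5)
Your route is the same as the paper's: frame localization, normal monomials $x_{P,L}\,dx_{P',L'}$, transposition relations from $\delta(J^\Omega)$, canonical monomials factoring as cup products, and a sign comparison with $W$. However, the step that is supposed to produce the mixed transposition relations does not work as you state it.

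Differentiating $\delta(x_{Q,L})\equiv(-1)^j(x_{P,L}+x_{\tau_j(P),L})$ only yields $\delta(dx_{Q,L})\equiv(-1)^j(dx_{P,L}+dx_{\tau_j(P),L})$, where $d$ of a monomial is the \emph{sum} over which factor carries the $d$. This never isolates an individual relation $x_{P,L}\,dx_{P',L'}\equiv -x_{\tau_j(P),L}\,dx_{\tau_j(P'),L'}$. Since $d^2=0$, it says nothing at all about monomials with two or more $dx$ factors. Yet three later steps need exactly these individual relations:
the unraveling of Proposition \ref{prop:canonical-partition};
the graded commutativity of the quotient, without which $\Psi$ is neither well defined nor onto;
the presentation of the quotient on which your injectivity argument rests.

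The fix is to apply the \emph{proof} of Lemma \ref{lem:gendeltaJgen} directly to the mixed overlap monomials $x_{Q,L}\,dx_{Q',L'}$, which are generators of $J^\Omega$ in their own right. Each $d_i^*$ is multiplicative and commutes with $d$, and $d$ preserves every $I^\Omega_{n,j}$. So for $i\neq j$ the pullbacks of the two overlapping factors both lie in $I^\Omega_{n,j+1}$ (if $i<j$) or in $I^\Omega_{n,j}$ (if $i>j$), and that term has higher vanishing. For $i=j$ the two cross terms are the two primitives, with the order of the factors unchanged. Hence these relations carry no Koszul shift.

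With this in place your three signs are correct. The mixed one, however, depends on the sign $(-1)^{qp'}$ built into the cup product. The transpositions alone give $(-1)^{kk'}$; the missing $(-1)^k$ comes from $dx_{\iota_{k'},\ell'}\cup x_{\iota_k,\ell}\equiv(-1)^k\,dx_{\{1,\dots,k'\},\ell'}\,x_{\{k'+1,\dots,k'+k\},\ell}$.

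Second, the compatibility with the differentials is misstated, and this breaks your intrinsic construction.
\begin{itemize}
\item With the paper's cup product one computes $d(\alpha\cup\beta)=(-1)^{p'}d\alpha\cup\beta+(-1)^q\alpha\cup d\beta$ for $\alpha\in\Omega^q(G_p)$, $\beta\in\Omega^{q'}(G_{p'})$. So $d$ is not a derivation of $\cup$ and cannot be fed into a universal property; the derivation of total degree one is $(-1)^pd$.
\item With the total-degree conventions your sign checks use, $d_h=[j(d_C),d_v]$ anticommutes with $d_v$, whereas $\delta d=d\delta$. Hence no injective map can intertwine $d_v$ with $d$ and $d_h$ with $\delta$ simultaneously: it would force $d_vd_hf=0$ for all $f\in C^\infty(G_0)$, which fails as soon as the anchor is nonzero.
\item The consistent matching is $d_v\leftrightarrow(-1)^pd$ and $d_h\leftrightarrow\delta$. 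The paper's $\ve^\Omega(d\omega)\simeq d_v\ve^\Omega(\omega)$ suppresses the same sign.
\item The universal property you invoke must be the smooth one, since $W(A)$ is not the algebraic K\"ahler complex of $C$. This is harmless here, because the target derivation restricts to de Rham on $C^\infty(G_0)$.
\end{itemize}
Finally, the rank count is not a valid check. Taylor coefficients on well-ordered partitions compute $\Omega_N(G_n)/J^\Omega_n$, which is much larger than the corresponding piece of $W$. The ranks agree only after imposing the transposition relations, and that is exactly what the sign-consistency argument has to prove.
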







\begin{remark}
When $G$ the nerve of a groupoid $H\rightrightarrows M$, comparison results with existing formulae (see \cite{ac11,LBM}), analogous to that of Prop. \ref{prop:gdcomparison}, can be also carried out at the level of forms. This will be detailed elsewhere.
\end{remark}

\section{The higher van Est isomorphism theorem}\label{sec:vE}


In this section, starting with a simplicial manifold $G$, we consider our van Est map relating simplicial and infinitesimal cochains, 
$$\ve:C_N(G)\to CE(A_G)= C_N(G)/\J,$$ 
study the induced map in cohomology, and prove a van Est isomorphism theorem under suitable horn-filling and topological conditions (Theorem \ref{thm:3} below) generalizing classical results \cite{vanEst,crainic} (and also particular cases \cite{cabrera-drummond,angulo-cueca}).


\subsection{The decalage and the interpolating double complex}

Here we introduce a double complex $D$, which incorporates mixed horizontal-finite and vertical-infinitesimal cochains,
and is horizontally and vertically augmented by our complexes of interest. It is built using the double decalage $Dec(G)$, an iterated path space that we recall below. We also show that it has acyclic rows, yielding a factorization of the van Est map in cohomology.
$$
\xymatrix{ CE(A_G) \ar[r] & D \\ &\ar[u] C(G) } 
\qquad\qquad
\xymatrix{
H(A_G) \ar[r]^\cong & H(Tot D)  \\ & H(G) \ar[u]
\ar@{-->}[ul]^{\ve}}
$$



Given $(G,d_i,s_j)$ a simplicial manifold, its {\bf path space} $(P_0G,d'_i,s'_j)$ is the simplicial manifold given by $P_0G_n=G_{n+1}$, $d'_i=d_{i+1}$ and $s'_j=s_{j+1}$. 
We may think of $P_0G$ as the space of 1-simplices or paths in $G$ and homotopies fixing the source. 
Regarding $G_0$ as a constant simplicial manifold, we have a simplicial inclusion $s:G_0\to P_0G$, as well as a simplicial projection $e:P_0G\to G_0$ which keeps the first vertex. Then $es=\id$, and there is a simplicial homotopy $c:se\then \id$, the {\bf canonical contraction}, induced by the extra degeneracy: 
$$c_k:P_0G_n\to P_0G_{n+1} \qquad c_k(x)=(s_0)^{k+1}(d_1)^k(x) \qquad x\in P_0G_n \qquad 0\leq k\leq n$$ 
Note then that $H^k(P_0G)=0$ for $0<k$ and $H^0(P_0G)\cong C^\infty(G_0)$. The map $d_0:P_0G\to G$ is simplicial and resembles the path fibration.
$$\xymatrix{ & P_0G \ar[dl]_{d_0} \ar[dr]^{e} & \\ G & & G_0 \ar@/^/[lu]^{s}}$$

\begin{example}
If $G$ is the nerve of Lie groupoid $H\toto M$, then $d_0:P_0G\to G$ is a simplicial model for the universal principal $H$-bundle $EH\to BH$. If $H$ is a Lie group, then $P_0G=EH$ is contractible.
\end{example}

The {\bf decalage} $Dec(G)$ \cite{illusie}, see also \cite{cegarra}, is the bisimplicial manifold obtained by iterating the path space construction. Starting with a simplicial manifold $G$, we have
$$Dec(G)_{pq}=G^{\Delta^p\ast\Delta^q}\cong G_{p+q+1} \qquad 
\begin{cases}
d_i^h(g)=d_{i+q+1}(g) & d_i^v(g)=d_i(g) \\ 
s_j^h(g)=s_{j+q+1}(g) & s_j^v(g)=s_j(g)
\end{cases}
$$

A $(p,q)$-simplex $g\in Dec(G)_{pq}$ is a simplicial map $\Delta^p\ast\Delta^q\to G$, where $\Delta^p\ast\Delta^q$ is the simplicial join, which is canonically isomorphic to $\Delta^{p+q+1}$. We can visualize its vertices in two rows, the upper one with $p+1$ vertices and the lower one with $q+1$ vertices:
$$\xymatrix{ g_{p+q+1} & g_{p+q} & \cdots & g_{q+2} & g_{q+1} }$$
$$\xymatrix{g_{q} & g_{q-1} & \cdots & g_1 & g_0 }$$
Then $d^h_i(g)$ is the face missing the $i$-th vertex of the top row, while $d^v_i(g)$ is the one missing the $i$-th vertex of the bottom row. Similarly, $s^h_j(g)$ repeats the $j$-th vertex of the top row, while $s^v_j(g)$ repeats the $j$-th vertex of the bottom row. We can depict the whole situation as follows:

{\small
$$\xymatrix{ 
\ar@{}[r]|{}="a"  
\ar@{}[ddd];[ddddr]|{}="b"
\ar@{}[dddrrrr];[ddddrrrr]|{}="c" 
\ar@{--}"a";"b"    \ar@{--}"b";"c"
\vdots \ar@<2pt>[d] \ar@<-2pt>[d] \ar@<6pt>[d] \ar@<-6pt>[d] & \vdots \ar@<2pt>[d] \ar@<-2pt>[d] \ar@<6pt>[d] \ar@<-6pt>[d] & \vdots \ar@<2pt>[d] \ar@<-2pt>[d] \ar@<6pt>[d] \ar@<-6pt>[d] & \vdots \ar@<2pt>[d] \ar@<-2pt>[d] \ar@<6pt>[d] \ar@<-6pt>[d] & \reflectbox{$\ddots$} \\
G_2 \ar@<5pt>[d] \ar[d] \ar@<-5pt>[d] & \ar[l] Dec(G)_{02} \ar@<5pt>[d] \ar[d] \ar@<-5pt>[d] & Dec(G)_{12} 
\ar@<3pt>[l] \ar@<-3pt>[l] \ar@<5pt>[d] \ar[d] \ar@<-5pt>[d]
& Dec(G)_{22} \ar@<5pt>[d] \ar[d] \ar@<-5pt>[d] \ar@<5pt>[l] \ar[l] \ar@<-5pt>[l] & \dots \ar@<2pt>[l] \ar@<-2pt>[l] \ar@<6pt>[l] \ar@<-6pt>[l]\\
G_1 \ar@<3pt>[d] \ar@<-3pt>[d] & \ar[l] Dec(G)_{01} \ar@<3pt>[d] \ar@<-3pt>[d] & Dec(G)_{11} \ar@<3pt>[l] \ar@<-3pt>[l] \ar@<3pt>[d] \ar@<-3pt>[d] & Dec(G)_{21} \ar@<3pt>[d] \ar@<-3pt>[d] \ar@<5pt>[l] \ar[l] \ar@<-5pt>[l] & \dots \ar@<2pt>[l] \ar@<-2pt>[l] \ar@<6pt>[l] \ar@<-6pt>[l] \\
G_0 & \ar[l] Dec(G)_{00} \ar[d]& Dec(G)_{10} \ar[d] \ar@<3pt>[l] \ar@<-3pt>[l] & Dec(G)_{20} \ar[d] \ar@<5pt>[l] \ar[l] \ar@<-5pt>[l] &  \dots \ar@<2pt>[l] \ar@<-2pt>[l] \ar@<6pt>[l] \ar@<-6pt>[l] \\
& G_0 & G_1 \ar@<3pt>[l] \ar@<-3pt>[l] & G_2 \ar@<5pt>[l] \ar[l] \ar@<-5pt>[l] & \dots \ar@<2pt>[l] \ar@<-2pt>[l] \ar@<6pt>[l] \ar@<-6pt>[l]
}$$
}


\begin{remark}\label{rmk:contractions}
The decalage has canonical horizontal and vertical augmentations and contractions
$$d_{q+1}:Dec(G)_{0q}\to G_q \qquad d_0:Dec(G)_{p0}\to G_p$$
$$c^h_k:Dec(G)_{p,q}\to Dec(G)_{p+1,q} \qquad
c^v_k:Dec(G)_{p,q}\to Dec(G)_{p,q+1}$$ 
which are those of the underlying path spaces.  The contraction $c^h_k:Dec(G)_{p,q}\to Dec(G)_{p+1,q}$, $c^h_k(x)=s_q^{k+1}d_{q+1}^k(x)$, $x\in G_{p+q+1}$, $0\leq k\leq p$, is not a simplicial map between the columns 
for it fails to commute with $d_q^v$, but it does commute with the other faces and the degeneracies. Concretely, if $i<q$, we have 
$d^v_i c^h_k=c^h_k d^v_i:Dec(G)_{p,q}\to Dec(G)_{p+1,q-1}$, and $s^v_j c^h_k=c^h_k s^v_j:Dec(G)_{p,q}\to Dec(G)_{p+1,q+1}$
The situation with the vertical contractions is analogous.
\end{remark}

The decalage $Dec(G)$ yields a double complex
$C_{pq}=(C^\infty(Dec(G)_{p,q}),\delta^h,\delta^v)$
with acyclic rows and columns.
Each column is the algebra of cochains on an iterated path space, $C_{p,\bullet}=C(P_1^{p+1}G)$, so it has its own subalgebra of normalized functions $N_{p,q}^{v}$ and its own ideal of higher vanishing functions ${\Jgen}_{p,q}^v$. The vertical differentiating ideal is then $\J^v = {\Jgen}^v + \delta^v {\Jgen}^v$.
The horizontal differential $\delta^h$ restricts to the double subcomplex $N^v$ and descends to the quotient $N^v/\J^v$. 

\begin{definition}\label{def:D}
Given $G$ a simplicial manifold, its {\bf \nameD} $(D,d^v,\delta^h)$ is the vertical differentiation of the decalage cochains, namely $D_{pq} = N^v_{pq}/\J^v_{pq}$,
$d^v$ is the vertical Chevalley-Eilenberg differential, and $\delta^h$ is the residual horizontal differential.
\end{definition}


\begin{proposition}\label{prop:rows}
The horizontally augmented double complex $CE(A_G)\to D$ has exact rows. The augmentation yields an isomorphism in cohomology $H(A_G)\xto\cong H(D)$
\end{proposition}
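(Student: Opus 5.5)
The plan is to differentiate, column by column, the horizontal contracting homotopy that makes the rows of the decalage cochain complex acyclic. Fix $q$ and look at the $q$-th row of the augmented complex,
$$CE^q(A_G)\to D_{0q}\to D_{1q}\to\cdots.$$
It is the vertical differentiation of the row
$$C^\infty(G_{q})\xto{d_0^*} C^\infty(Dec(G)_{0q})\to C^\infty(Dec(G)_{1q})\to\cdots.$$
Horizontally, $Dec(G)_{\bullet,q}$ is augmented over $G_q$. More precisely, it is the path space construction applied in the top row of vertices, and it comes with the extra degeneracies $c^h_k$ of Remark \ref{rmk:contractions}. Pulling these back gives the standard contracting homotopy $h=\sum_k(-1)^k (c^h_k)^*$ on the augmented row of cochains. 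I would first check that the augmentation $d_{q+1}$ (or $d_0$, depending on the convention) is compatible with the vertical normalization and with $\J^v$, so that it really induces the map $CE(A_G)\to D_{0,\bullet}$. Both structures are built from the simplicial structure on the bottom row of vertices, and the augmentation deletes the top vertex, which only involves faces of index larger than the vertical ones.

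The key step is to show that $h$ preserves $N^v$ and $\J^v$, and that it is compatible with the vertical structure up to the defect at $d^v_q$. By Remark \ref{rmk:contractions}, $c^h_k$ commutes with all the vertical degeneracies. Hence $(c^h_k)^*$ sends vertically normalized functions to vertically normalized functions: if $f$ vanishes on each $s^v_{j}(\cdot)$, then so does $f\circ c^h_k$. Commuting with degeneracies also means that $c^h_k$ maps each vertical degeneracy locus $S^v_{q,j}$ into the corresponding one. It therefore preserves the ideals $I(S_{q,j})$ and their squares, so $(c^h_k)^*J^v\subset J^v$.

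The obstacle is $\delta^v J^v$, because $c^h_k$ fails to commute with $d^v_q$. I would handle this by working degreewise modulo $\J^v$, using the local picture from Section 3. With a frame, $D_{pq}$ is spanned by monomials in normal coordinates along the vertical fibres. The homotopy only needs to be a horizontal chain homotopy on each fixed row, so I do not need it to commute with $d^v$. I only need it to preserve $\J^v_{pq}$ inside the fixed bidegree. By Lemma \ref{lem:gendeltaJgen}, $\J^v$ is spanned in each degree by $J^v$ together with sums $x_{P,L}+x_{\tau_j(P),L}$ over allowed transpositions. The map $(c^h_k)^*$ commutes with the vertical positive faces and degeneracies, and in the vertical frame coordinates it should act by a map that is linear on normal coordinates, by Lemma \ref{lemma:faces-degeneracies}. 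So it should send these generators to generators of the same type. This is the step I expect to be delicate: one has to choose the vertical frames compatibly across columns, or else argue invariantly through Lemma \ref{lem:Jgenlocal} and an order-of-vanishing count as in Corollary \ref{cor:deltaord}. The count would show that elements of total vertical order at least $q+1$ pull back to elements of order at least $q+1$, and that this, together with the degree-wise description of $\hat J$ by orders, suffices.

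Granting this, $h$ descends to a contracting homotopy of each augmented row $CE^q(A_G)\to D_{\bullet q}$, so the rows are exact. A standard double complex spectral sequence argument, filtering by $q$ so that the row exactness gives $E_1$ concentrated in the augmentation column (all bidegrees are nonnegative), then shows that $CE(A_G)\to \mathrm{Tot}(D)$ is a quasi-isomorphism, giving $H(A_G)\cong H(D)$.
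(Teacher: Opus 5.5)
Your overall strategy matches the paper's. You pull back the extra degeneracies to get a horizontal contraction $\theta=\sum_k\pm(c^h_k)^*$. You use that $c^h_k$ commutes with the vertical degeneracies to get $\theta(N^v)\subset N^v$ and $\theta(J^v)\subset J^v$. And you conclude by a spectral sequence once $\theta$ descends to $D$. Those parts are fine.

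The gap is the one nontrivial step, $\theta(\delta^v J^v)\subset\J^v$, which you ``grant''; neither route you sketch can supply it. The frame route relies on the assertion that $(c^h_k)^*$ commutes with the vertical positive faces. This is false: for $q\geq1$ the face $d^v_q$ is positive, and it is exactly the one $c^h_k$ fails to commute with, as you note yourself a paragraph earlier. So $c^h_k$ is not a map of vertical simplicial manifolds, and nothing forces it to be linear in vertical normal coordinates. Modulo $J^v$, the pullback of a single $x_{\alpha,\ell}$ can pick up monomials over finer partitions of $\alpha$, with coefficients that have no reason to be $\tau_j$-symmetric. Hence there is no reason for it to send the relations $x_{P,L}+x_{\tau_j(P),L}$ of Lemma \ref{lem:gendeltaJgen} to relations of the same shape. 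The order-of-vanishing route fails more basically, because $\J$ is not characterized by orders. In degree $n$ the elements $x_{P,L}+x_{\tau_j(P),L}$ have total order exactly $n$, just like the partition monomials that survive in $CE(A_G)$. So an order count can only reprove $(c^h_k)^*J^v\subset J^v$.

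The missing idea is to compare $\theta$ with $\delta^v$ instead of tracking generators. Take $f\in J^v_{p,q-1}$ with higher vanishing at $S^v_{j_0}$, $1\leq j_0\leq q-1$, and write $\theta\delta^v f$ as the graded commutator $[\theta,\delta^v]f$ plus $\pm\delta^v\theta f$. The second term lies in $\delta^v(J^v)$ because $\theta(J^v)\subset J^v$. In the commutator, every term involving $d^v_i$ with $i<q$ cancels, since those faces commute with $c^h_k$. This leaves $\sum_k\pm\big(f\circ c^h_kd^v_q-f\circ d^v_qc^h_k\big)$. These composites are not simplicial, but they still carry $S^v_j$ into $S^v_j$ for $j\leq q-1$; for instance $c^h_kd^v_qs^v_{j-1}=s^v_{j-1}c^h_kd^v_{q-1}$. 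So each pullback keeps higher vanishing at $S^v_{j_0}$. Moreover, the commutator is itself normalized, since $\theta$ and $\delta^v$ preserve $N^v$, so it lies in $J^v$. This gives $\theta(\J^v)\subset\J^v$, and the rest of your argument then goes through.
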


\begin{proof}
We define the map $\theta: C_{p,q}\to  C_{p-1,q}$, $\theta(f)=\sum_{k=0}^{p-1} (-1)^{q+k}(c^h_k)^*(f)$. 
Since the horizontal contractions commute with the vertical degeneracies, each pullback operator $(c^h_k)^*$ preserves $N^v$ and $\Jgen^v$, and therefore, so does $\theta$. 
We claim that $\theta(\Jgen^v+\delta^v(\Jgen^v))\subset \Jgen^v+\delta^v(\Jgen^v)$, and therefore, $\theta$ descends to a horizontal contraction of the double complex $D$. 

Let $f\in \Jgen^v_{p,q-1}$ with higher vanishing over $S^v_{j_0}=s^v_{j_0-1}(Dec(G)_{p,q-2})$, $1\leq j_0\leq q-1$,
and let us show that $\theta\delta^v(f)\subset J^v+\delta^v(J^v)$.
We have 
$$\theta\delta^v(f)=
\theta\delta^v(f)-\delta^v\theta(f)+\delta^v\theta(f)=
\sum_{k=0}^{p-1}\sum_{i=0}^q  
(-1)^{q+k+i}(fc_k^hd^v_i -fd^v_ic_k^h) +\delta^v\theta(f)$$
The terms corresponding to $i<q$ cancel out for, in this case, the vertical face commutes with the horizontal contraction, see Remark \ref{rmk:contractions}. Then we have
$$\theta\delta^v(f)=
\sum_{k=0}^{p-1}(-1)^{k}(fc_k^hd^v_q -fd^v_qc_k^h) +\delta^v\theta(f)$$
We know that $\theta(f)\in \Jgen^v$, and therefore $\delta^v\theta(f)\in \delta^v(\Jgen^v)$. Moreover, each of the terms $(c_k^hd_q^v)^*(f)$ and $(d_q^vc_k^h)^*(f)$ are in $\Jgen^v$. This follows from the fact that $c_k^hd_q^v(S^v_j)\subset S^v_j$ and 
$d_q^vc_k^h(S^v_j)\subset S^v_j$ for $1\leq j\leq q-1$. And this is because, since $j<q$, $c^h_kd^v_qs^v_{j-1}= c^h_k s^v_{j-1} d^v_{q-1} = s^v_{j-1}c^h_k  d^v_{q-1}$, and similarly for the other case.
Hence we have proved that the chain homotopy contraction $\theta$ descends to the quotients, and that after vertical differentiation, the rows are still exact. Thus, the cohomology of the algebroid is the same as that of $D$.
\end{proof}


\begin{remark}\label{rmk:horizVE}
As claimed at the beginning of the subsection, the interpolation is compatible with the van Est map at the level of cohomology. At a first glance, the van Est map $\ve:C_N(G)\to CE(A_G)$ does not commute with the augmentations of $D$: given $f\in C^k_N(G)$, it appears in $D_{k,0}$ via the vertical augmentation, while $\ve(f)$ appears in $D_{0,k}$ via the horizontal augmentation
$$
\begin{matrix}\xymatrix{
CE(A_G) \ar[r] & D \\ C_N(G) \ar[r]^\subset \ar[u]^\ve & C(G) \ar[u]
}\end{matrix}\qquad\overset{H}\mapsto\qquad
\begin{matrix}\xymatrix{
H(A_G) \ar[r]^\cong & H(D)  \\H(G) \ar@{=}[r] \ar[u]^\ve & H(G) \ar[u]}
\end{matrix}$$ 
Nevertheless, these two elements are cohomologous in $H(D)$, as follows. Write $C_{pq}=C^\infty(Dec(G)_{pq})$ and recall the vertical and horizontal augmentations, $d_0^*:C^p(G)\to C_{p,0}$ and $d_{last}^*:C^p(G) \to C_{0,q}$, respectively.
Given $f\in C^\infty(G_k)$, $k\geq 0$, write $(f,p,q)\in C^{pq}$ for the function $f$ viewed as a cochain in $C$ of bidegree $(p,q)$, $p+q=k-1$. If  $\delta f = 0$, in the total complex we have 
$$\delta^{tot}\left(\sum_{p+q=k-1} (f,p,q)\right)= d_0^*f-d_{last}^*f,$$
where $\delta^{tot}=\delta^v + (-1)^{q}\delta^h$ is the total differential. We also note that, in the case of $1$-groupoids, \cite{LBM} uses a similar interpolating double complex and the van Est map is defined at the level of cochains via a homotopy inverse for the horizontal augmentation. In our approach, the idea is that we already have the van Est map naturally defined through a geometric construction and we shall not need such a homological characterization. Also notice that, although our $\ve$ is defined over the normalized cochains $C_N(G)$, it can be extended to all cochain just by composing with the canonical retraction $r:C(G)\to C_N(G)$, 
$r(f)=\sum_{\{i_1<\dots<i_k\}\subset\{1,\dots,n\}} (-1)^{k}f s_{i_1}\dots s_{i_k} d_{i_k}\dots d_{i_1}$. 
\end{remark}


\subsection{Hypercovers at the infinitesimal level}

\label{subsec:hypercovers}


We now analyze the cohomology of the columns of $D$.  When $G$ is a higher groupoid, the augmentations $Dec(G)_{p\bullet}\to G_p$ turn out to be hypercovers, which are simplicial fibrations generalizing the groupoid Morita equivalences (see \cite{getzler,dhos}). In this subsection, we show that general hypercovers yield quasi-isomorphisms at the level of Chevalley-Eilenberg algebras, by working with simple hypercovers. In the following subsection, we return to the decalage and apply this general result to prove the van Est theorem.


We recall some notions and conventions from \cite{dhs}. A simplicial set $K$ can be regarded as the colimit of its non-degenerate simplices, $K=\colim_{\Delta^n\mono K}\Delta^n$. Then, if $G$ is a simplicial manifold, the space of maps 
$K\to G$ is a limit space in a canonical way, namely, a subspace of the ambient product
$$G^K=\lim_{\Delta^n\mono K} G_n\subset\prod_{\Delta^n\mono K}G_n$$
When $K$ has finitely many non-degenerate simplices, then we say that $G^K$ is smooth if it is an embedded submanifold of the ambient product. Similarly, given $i:K\to L$ a map of finite simplicial sets and $f:G'\to G$ a map of simplicial manifolds, the space $\hom(i,f)= G'^K\times_{G^K} G^L$ of commutative squares
$$\xymatrix{K \ar[r] \ar[d] & G' \ar[d] \\ L\ar[r] & G}$$
is smooth if it is and embedded submanifold of the product $\prod_{\Delta^n\mono K}G'_n\times \prod_{\Delta^n\mono L}G_n$.
Given $f:G'\to G$, the {\bf relative matching spaces} are defined as $M_n(G'/G)=(G')^{\partial \Delta^n}\times_{G^{\partial\Delta^n}}G_n$.

\begin{definition}
A morphism $f:G'\to G$ of simplicial manifolds is a {\bf hypercover} if the relative matching spaces $M_n(G'/G)$ are smooth and the natural restriction maps $\partial_n:G'_n\to M_n(G'/G)$ are surjective submersions for all $n\geq 0$.
\end{definition}

\begin{example}
A hypercover between Lie 0-groupoids is the same as a diffeomorphism between the corresponding manifolds, and a 
hypercover between Lie 1-groupoids is the same as a Morita fibration, see e.g. \cite{survey}. 
\end{example}

\begin{remark}
In the case $n=0$, since $\partial\Delta^0=\emptyset$, 
the restriction map $\partial_0:G'_0\to M_0(G'/G)=G_0$ identifies with $f_0$. This is not required, for instance, in  \cite{wolfson}. We follow the notations and conventions in \cite{dhos}. For us, hypercovers are simplicial fibrations, and they are levelwise surjective submersions. 
\end{remark}


The next lemma explains how hypercovers naturally appear as path fibrations in the decalage construction.

\begin{lemma}
If $G$ is a higher Lie groupoid, then so is $P_0G$, and the first vertex projection $e:P_0G\to G_0$ is a hypercover.
\end{lemma}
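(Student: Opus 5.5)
The plan is to reduce everything to statements about limits of $G$ over finite simplicial sets, using that $P_0G$ is obtained from $G$ by precomposition with the join functor $K\mapsto \Delta^0\ast K$. For any simplicial set $K$ we have a natural identification $(P_0G)^K\cong G^{\Delta^0\ast K}\times_{G^{\Delta^0}}\ast$ fibered over $G_0$, or more precisely $(P_0G)^K\cong G^{\Delta^0\ast K}$, where the extra cone vertex is the $0$-th vertex. This holds because $P_0G_n=G_{n+1}=G^{\Delta^0\ast\Delta^n}$ with faces $d'_i=d_{i+1}$ and degeneracies $s'_j=s_{j+1}$, which are exactly the maps induced by $\Delta^0\ast\delta_i$ and $\Delta^0\ast\sigma_j$, and because the join preserves the colimit presentation $K=\colim_{\Delta^n\mono K}\Delta^n$ relative to the cone point. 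Under this identification, the projection $e$ corresponds to restriction to the cone vertex $\Delta^0\subset\Delta^0\ast K$.

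For the first claim, consider the horn inclusion for $P_0G$: the map $P_0G_n\to(P_0G)^{\Lambda^n_k}$ is identified with $G^{\Delta^{n+1}}\to G^{\Delta^0\ast\Lambda^n_k}$. Since $\Delta^0\ast\Lambda^n_k=\Lambda^{n+1}_{k+1}$ as subcomplexes of $\Delta^{n+1}$ (the faces of $\Delta^0\ast\Delta^n$ containing the cone vertex are $\Delta^0\ast\delta_i(\Delta^{n-1})$, and the face opposite the cone vertex is $\delta_0$, which lies in the horn as $k+1\neq0$), this map is exactly the horn map $d_{n+1,k+1}$ of $G$. It is therefore a surjective submersion onto an embedded submanifold by HG1 and HG2 for $G$. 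Hence $P_0G$ satisfies HG1 and HG2.

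For the hypercover claim, we need that $M_n(P_0G/G_0)=(P_0G)^{\partial\Delta^n}\times_{G_0^{\partial\Delta^n}}G_0$ is smooth and that $P_0G_n\to M_n$ is a surjective submersion. Since $G_0$ is constant, $G_0^{\partial\Delta^n}=G_0$ for $n\geq1$ (the boundary is connected), so $M_n=(P_0G)^{\partial\Delta^n}$. For $n=0$ we have $M_0=G_0$ and $\partial_0=e_0=d_1:G_1\to G_0$, which is a surjective submersion since it admits the section $s_0$. For $n\geq1$, the join computation gives $(P_0G)^{\partial\Delta^n}\cong G^{\Delta^0\ast\partial\Delta^n}=G^{\Lambda^{n+1}_0}$, and $\partial_n$ becomes the horn map $d_{n+1,0}:G_{n+1}\to G_{n+1,0}$. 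This is again a surjective submersion onto a smooth horn space by HG1 and HG2.

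The main obstacle is verifying the join identifications carefully. One has to check that the limit $G^{\Delta^0\ast K}$ computed over nondegenerate simplices agrees with $(P_0G)^K$ as embedded submanifolds of the ambient products, and not merely as sets. Nondegenerate simplices of $\Delta^0\ast K$ are either simplices of $K$ or cones on them, and the simplices of $K$ alone are faces of their cones. So the extra factors in the ambient product are determined by the cone factors via the face maps $d_0$. The two limit spaces therefore differ by a graph embedding, which is closed. Consequently, smoothness of one is equivalent to smoothness of the other, and the comparison is a diffeomorphism.
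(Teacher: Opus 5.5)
\textbf{The horn identification is wrong.} You claim $\Delta^0\ast\Lambda^n_k=\Lambda^{n+1}_{k+1}$, but it is not. The face $\delta_0$ of $\Delta^{n+1}$ opposite the cone vertex is the whole base $\Delta^n$. It does not lie in $\Delta^0\ast\Lambda^n_k$, because $\Delta^n\not\subset\Lambda^n_k$; that $\delta_0$ lies in $\Lambda^{n+1}_{k+1}$ is beside the point. In fact $\Delta^0\ast\Lambda^n_k$ is the union of the faces $\delta_m$ of $\Delta^{n+1}$ with $m\neq 0,k+1$, a horn with two faces missing. This is the complex $K$ in the paper's proof.

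You can read this off the formulas. The horn map of $P_0G$ is $x\mapsto(d'_ix)_{i\neq k}=(d_mx)_{m\neq 0,k+1}$, i.e.\ $d_{n+1,k+1}$ with its $d_0$-component dropped. For $n=1$, $k=0$ it is $d_2:G_2\to G_1$, not $(d_0,d_2):G_2\to G_{2,1}$. So HG1 and HG2 for $P_0G$ are not instances of HG1 and HG2 for $G$. The step you treat as immediate is exactly the one for which the paper invokes \cite[Corollary 6.6]{dhs}.

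\textbf{How to close the gap.} Set $K=\Delta^0\ast\Lambda^n_k$. Then $K\cup\delta_{k+1}=\Lambda^{n+1}_0$, and $K\cap\delta_{k+1}$ is the $0$-horn of the face $\delta_{k+1}\cong\Delta^n$. Hence $G^{\Lambda^{n+1}_0}\cong G^K\times_{G_{n,0}}G_n$. The restriction $G_{n+1}\to G^K$ therefore factors as $d_{n+1,0}$ followed by a base change of $d_{n,0}$, a composite of surjective submersions, provided $G^K$ is smooth.

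That smoothness needs its own argument. $K$ is a cone whose apex is the first vertex. It is built from a point by attaching $\Delta^0\ast\sigma$ along the $0$-horn $\Delta^0\ast\partial\sigma$, for $\sigma$ running through the simplices of $\Lambda^n_k$ in increasing dimension. So $G^K$ is an iterated base change of $0$-horn maps $d_{m,0}$, $m\leq n$, of $G$, as in the collapse argument for horn germs in Section 2.

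\textbf{Smaller slips in the hypercover part.}
\begin{itemize}
\item The identification $(P_0G)^K\cong G^{\Delta^0\ast K}$ holds only for $K$ connected and nonempty. In general the correct statement is $G^{\Delta^0\ast K}\cong(P_0G)^K\times_{G_0^K}G_0$.
\item $\partial\Delta^1$ is not connected, so $G_0^{\partial\Delta^1}=G_0\times G_0$, not $G_0$.
\end{itemize}
These two errors cancel, so $M_n(P_0G/G_0)\cong G_{n+1,0}$ is correct for all $n\geq1$. That part of your argument then coincides with the paper's, which reduces to filling the $(n,0)$-horns of $G$.

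Finally, for $n=0$, the section $s_0$ only shows that $d_1$ is submersive along $s_0(G_0)$. Use HG2 for $d_1=d_{1,0}$ instead.
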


\begin{proof}
To see that $P_0G$ is a higher Lie groupoid, the idea is to fill a $k$-horn in the path space is the same as filling a simplex out of a map $K\to G$, where $K$ is obtained from $\Lambda^n_0$ by removing the $k+1$-th face. More precisely, we have natural identifications $P_0G=G_{n+1}$ and $(P_0G)^{\Lambda^{n+1}_k}=G^K$, and the claim follows from \cite[Corollary 6.6]{dhs}.  To see that $e$ is a hypercover, we need to show that the $(n,0)$-horns of $G$ admit a filling, and this holds because $G$ is a higher Lie groupoid.
\end{proof}


The class of hypercovers contains the isomorphisms and is closed under base-change and composition \cite{dhs}. Moreover, to understand general hypercovers, it is enough to pay attention to a very specific type. A hypercover $f:G'\to G$ is {\bf $m$-simple} if $\partial_n$ is a diffeomorphism for every $n\neq m$. In such a hypercover, the maps $f_k:G'_k\to G_k$ are isomorphisms for $k<m$, and we can think of $G'$ as a sort of $m$-shifted pullback of $G$ through $f_m$. The following is proven \cite{dhs} for higher Lie groupoids with bounded tangent complex, but the same proof applies to the general case.


\begin{proposition}\cite{dhs}
If $f:G'\to G$ is a hypercover between higher Lie groupoids, 
the {\bf relative $m$-coskeleton} $\cosk_m(G'/G)=(G')^{\sk_k\Delta^n}\times_{G^{\sk_k\Delta^n}}G^{\Delta^n}$, whose $n$-simplices are commutative squares
$$\xymatrix{
\sk_m\Delta^n \ar[r] \ar[d] & G' \ar[d] \\
\Delta^n \ar[r] & G,
}$$ 
is a higher Lie groupoid, and the natural restriction $f^m:\cosk_m(G'/G)\to \cosk_{m-1}(G'/G)$ is an $m$-simple hypercover for every $m$.
\end{proposition}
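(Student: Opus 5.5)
The plan is to argue degreewise, by identifying the relative matching spaces of $f^m$ with known smooth spaces and checking that the restriction maps are diffeomorphisms except in degree $m$. Write $C_m=\cosk_m(G'/G)$. By adjunction, $n$-simplices of $C_m$ are pairs $(\sk_m\Delta^n\to G',\ \Delta^n\to G)$ agreeing on $\sk_m\Delta^n$. The map $f^m:C_m\to C_{m-1}$ restricts the $G'$-part from $\sk_m\Delta^n$ to $\sk_{m-1}\Delta^n$.

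\textbf{Step 1: smoothness of $C_m$.} I would argue by induction on $m$, starting from $C_{-1}=G$ (with $\sk_{-1}=\emptyset$), which is trivially a higher Lie groupoid. Then I show that $C_m$ is obtained from $C_{m-1}$ by the fibered product
$$(C_m)_n=(C_{m-1})_n\times_{\prod M_m(G'/G)}\prod G'_m,$$
where the products run over the non-degenerate $m$-simplices of $\Delta^n$, i.e.\ the injections $[m]\to[n]$. The map $G'_m\to M_m(G'/G)$ is the surjective submersion $\partial_m$ supplied by the hypercover hypothesis. Hence each $(C_m)_n$ is a base change of a surjective submersion, so it is smooth by the transversality criterion, exactly as in the tower argument of the germ horn Proposition. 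The same decomposition shows that $(f^m)_n$ is a surjective submersion.

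\textbf{Step 2: the relative matching maps of $f^m$.} For $n<m$ we have $\sk_m\Delta^n=\Delta^n=\sk_{m-1}\Delta^n$, so $(C_m)_n=G'_n=(C_{m-1})_n$. For $n>m$, an $n$-simplex of $C_m$ is determined by its boundary, since $\sk_m\Delta^n\subset\partial\Delta^n$; thus $\partial_n$ is bijective. Using $\sk_m\partial\Delta^n=\sk_m\Delta^n$ for $n>m+1$, together with a direct check for $n=m+1$, I would write the inverse as a restriction of smooth projections, so that $\partial_n$ is a diffeomorphism. For $n=m$, the relative matching space is $M_m(C_m/C_{m-1})\cong M_m(G'/G)$. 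Under this identification $\partial_m$ becomes $\partial_m:G'_m\to M_m(G'/G)$, which is a surjective submersion by hypothesis. Therefore $f^m$ is an $m$-simple hypercover.

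\textbf{Step 3: horn filling for $C_m$.} I would deduce this from the fact that $f^m$ is a hypercover and $C_{m-1}$ is a higher Lie groupoid. Following \cite{dhs}, a hypercover over a higher Lie groupoid has a higher Lie groupoid as source. The reason is that the horn map of $C_m$ factors as $C_m^{\Delta^n}\to \hom(\Lambda^n_k\subset\Delta^n, f^m)\to C_m^{\Lambda^n_k}$. The first map is a surjective submersion by the relative lifting property, obtained by filtering $\Lambda^n_k\subset\Delta^n$ through boundary inclusions. The second map is a base change of the horn map of $C_{m-1}$.

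\textbf{Main obstacle.} The main obstacle is smoothness, specifically verifying that these iterated limit spaces are embedded submanifolds without any boundedness assumption on the tangent complex. Each degree $n$ involves only finitely many fibered-product steps, so boundedness should not matter. Where \cite{dhs} appeals to bounded tangent complexes, I would check that the argument is used only levelwise.
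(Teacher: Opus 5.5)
Your proposal is correct and consistent with the paper, which gives no argument of its own: it defers to \cite{dhs} and remarks that the bounded-tangent-complex hypothesis there is unnecessary, and your levelwise decomposition $(C_m)_n=(C_{m-1})_n\times_{\prod M_m(G'/G)}\prod G'_m$ (finitely many base changes of surjective submersions in each degree) is exactly what justifies that remark. The one point to make explicit is condition HG1 for $C_m$ in Step 3, namely smoothness of the horn space $C_m^{\Lambda^n_k}$, which you need before $\hom(\Lambda^n_k\subset\Delta^n,f^m)$ is a manifold; it follows from the same decomposition with $\Lambda^n_k$ in place of $\Delta^n$, by induction on $m$ starting from the smooth horn space $G^{\Lambda^n_k}$.
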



The {\bf coskeletal tower} of a hypercover $f:G'\to G$ is the following factorization:
$$G'\to \dots \to \cosk_{m+1}(G'/G)\xto{f^{m+1}} \cosk_{m}(G'/G)\xto {f^m}\cdots \xto{f^1}\cosk_{0}(G'/G)\xto{f^0} G$$
Even though the tower may be infinite, each finite truncation stabilizes, since $f^m_n:\cosk_m(G'/G)_n\to \cosk_{m-1}(G'/G)_n$ is an isomorphism whenever $n<m$. The cohomological descent for hypercovers proves that each $f^m$, and therefore $f$, yield isomorphism in cohomology, see \cite{dhos}. We will show that for $m>0$ the same holds at the infinitesimal level. 

\begin{proposition}\label{prop:hypercover}
Let $f:G'\to G$ be an $m$-simple hypercover, $m>0$, let $V=\ker Df_m|_{G'_0}$ and let $D^{m}(V^*)$ denote the complex $V^*\xto\id V^*$ concentrated in degrees $m$ and $m+1$. Then:
\begin{enumerate}[a)]
\item The cotangent complexes fit into a short exact sequence
$0\to A_G^*\to A_{G'}^*\to D^{m}(V^*)\to 0$,
\item There is a dga isomorphism
$CE(A_{G'})\cong CE(A_G)\otimes S(D^{m}(V^*))$, and
\item The induced morphism $CE(A_G)\to CE(A_{G'})$ is a quasi-isomorphism.
\end{enumerate}
\end{proposition}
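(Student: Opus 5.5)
The plan is to first compute the tangent complexes, then lift the splitting to the Chevalley–Eilenberg algebras using semi-freeness (Theorem \ref{thm:2}), and finally apply a Koszul-type acyclicity argument. Since $m>0$, $f_0$ is a diffeomorphism, so $G'_0=G_0$. Both sides are therefore differentiated over the same base, and $f^*:CE(A_G)\to CE(A_{G'})$ is a morphism of $C^\infty(G_0)$-dgas.

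For a), I would pass to the linearization. Taking normal derivatives along $G_0$, $f$ induces a map of simplicial vector bundles $\nu(f):\nu(G',G_0)\to\nu(G,G_0)$ over $G_0$. The $m$-simple hypercover conditions hold at the germ level, as in the horn-space Proposition. Their linearization says the following: $\nu(f)_n$ is an isomorphism for $n<m$; $\nu(G')_m\to \nu(G)_m$ is an epimorphism with kernel $V$; and $\nu(G')_n\to M_n(\nu(G')/\nu(G))$ is an isomorphism for $n>m$. Let $K=\ker\nu(f)$, which is a simplicial vector bundle since $\nu(f)$ is levelwise surjective. Then $K_n=0$ for $n<m$, $K_m=V$, and $K$ is $m$-coskeletal ($K_n\cong K^{\partial\Delta^n}$ for $n>m$). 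Thus $K=\cosk_m$ of the Eilenberg–MacLane object $K(V[m])$. I would check with the Dold–Kan formulas of Remark \ref{rmk:DK2} that its normalization is $V\xto{\id}V$ in degrees $m,m+1$ and zero elsewhere. Applying the exact functor $N$ to $0\to K\to\nu(G')\to\nu(G)\to0$ and dualizing gives the sequence in a).

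For b), $f^*$ induces on indecomposables the injection $\Gamma(A_G^*)\to\Gamma(A_{G'}^*)$ from a). Choose splittings of $G$ and $G'$ via frames, and choose local (then global, via a complement of $A_G^*$ in $A_{G'}^*$ in degree $m$) generators $\xi\in\Gamma(V^*)$ of degree $m$ lifting $V^*\subset Q(CE(A_{G'}))^m$. Set $\eta=d\xi$. By a), the class of $\eta$ in $Q^{m+1}$ is the image of $\xi$ under $\id:V^*\to V^*$, so it spans the complement of $A_G^*$ in degree $m+1$.

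I then consider the multiplication map $CE(A_G)\otimes S(\xi,\eta)\to CE(A_{G'})$, $a\otimes p\mapsto f^*(a)\,p$. It is a dga map, because $d\xi=\eta$ and $d\eta=0$ hold exactly and $f^*$ is a dga map. On indecomposables it is an isomorphism by a). A morphism of semi-free, non-negatively graded commutative algebras that is an isomorphism on indecomposables is an isomorphism; this follows by induction on the filtration $F_k$, or by comparing associated graded algebras. This gives b).

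For c), $S(D^m(V^*))$ is the free graded-commutative algebra on $\xi,\eta$ with $d\xi=\eta$, so it is acyclic in positive degrees over $C^\infty(G_0)$. A contracting homotopy is the derivation $h$ with $h(\eta)=\xi$ and $h(\xi)=0$, which satisfies $[d,h]=$ (polynomial weight); we are in characteristic zero, and everything is $C^\infty(G_0)$-linear and locally free. Tensoring with $CE(A_G)$, the operator $1\otimes h$, rescaled by the inverse weight, contracts the positive-weight part. Hence the inclusion $CE(A_G)\to CE(A_G)\otimes S(D^m V^*)$ is a quasi-isomorphism, and by b) it is $f^*$.

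I expect the main obstacle to be a): checking that linearization turns the $m$-simple hypercover into an $m$-simple epimorphism of simplicial vector bundles, and identifying the normalization of the coskeletal kernel as exactly $D^m(V)$ with the identity differential.
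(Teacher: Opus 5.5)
Your proposal is correct, but for b) and c) it takes a genuinely different route from the paper.

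\textbf{Part a).} You and the paper both linearize. The paper only sketches this step (first fill a horn in $G$, then lift the filling). You make it explicit: $\ker\nu(f)$ is an $m$-coskeletal simplicial vector bundle with $K_m=V$, and its normalization is $V\xto{\id}V$. This is a clean way to see where $D^m(V^*)$ comes from.

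\textbf{Part b).} This is where the routes really diverge. The paper constructs a simplicial section germ $g:(G,G_0)\to(G',G_0)$ of $f$:
\begin{itemize}
\item $g_n=f_n^{-1}$ for $n<m$;
\item $g_m$ is a microbundle section whose image contains the degeneracy locus, built from a frame and Lemma \ref{lemma:nice-family} as in the proof of Theorem \ref{thm:1};
\item $g_n$ for $n>m$ is obtained by pulling back $\prod g_m$ along the cartesian square that $m$-simplicity provides.
\end{itemize}
Then $g^*$ is a dga retraction of $f^*$, and this splits off the new generators.

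You avoid this geometric construction altogether. You use only Theorem \ref{thm:2} for $G$ and $G'$, the naturality of $Q(CE(A_G))\cong\Gamma(A_G^*)$ under $f^*$, and part a). You adjoin lifts $\xi$ and $\eta=d\xi$, and conclude with the standard fact that a map of connected semi-free algebras which is an isomorphism on indecomposables is an isomorphism.

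What each route buys:
\begin{itemize}
\item Yours is shorter and purely algebraic. It works for any dga map of Chevalley--Eilenberg algebras whose linear part is injective with cokernel $D^m(V^*)$.
\item The paper's gives extra geometric information: a simplicial retraction of the hypercover near $G_0$, hence a retraction $g^*$ of $f^*$ and a splitting of the tangent-complex sequence.
\end{itemize}

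\textbf{Part c).} You write down the Koszul homotopy ($h(\eta)=\xi$, rescaled by the inverse weight) instead of citing the semi-free acyclic extension lemma of Gelfand--Manin.

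\textbf{One point to tidy.} The step ``then global, via a complement'' does not work literally, because $\xi\mapsto d\xi$ is not $C^\infty(G_0)$-linear: $d(h\xi)=dh\,\xi+h\,d\xi$. So $CE(A_G)\otimes S(\xi,\eta)$ with the untwisted tensor differential, and your homotopy $1\otimes h$, only make sense over opens $U\subset G_0$ trivializing $V$, as $CE(A_G|_U)\otimes_{\R}\R[\xi_i,\eta_i]$. Statement c) then globalizes because both Chevalley--Eilenberg algebras are global sections of sheaves of $C^\infty_{G_0}$-modules, which are fine. Local acyclicity of the mapping cone of $f^*$ therefore gives global acyclicity. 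The paper's ``the result follows by locality'' leaves the same local-to-global step implicit, so this is not a gap relative to the paper's argument.
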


\begin{proof}
Let us prove the dual statement to a) regarding tangent complexes. We can focus on germs around $G_0'\simeq G_0$, and split a horn-filling of $G'$ into two steps: first fill its projection in $G$, then lift the filling. With this a) becomes clear and the details are left to the reader.

We next prove part b), writing $A=A_G$ and $A'=A_{G'}$, and starting with the case in which $f$ admits a simplicial section $g: G\to G'$. This gives a splitting of the above sequence, and a retraction for the dga morphism $f^*:CE(A)\to CE(A')$. Then, the differential in $CE(A')$ splits, and we have a dga isomorphism
$$CE(A')\cong CE(A)\otimes S(D^{m+1}(V^*))$$
For the general case, we will show that, at least around $G_0$, there is such a simplicial section $g:(G,G_0)\to (G',G_0)$, and the result follows by locality. We define $g_n$ inductively. If $n<m$ then $G'_n\cong G_n$ is a diffeomorphism and we set $g_n=f_n^{-1}$. Then we construct $g_m:(G_m,G_0,e)\to (G'_m,G_0,e)$ to be a microbundle retraction to $f_m$ whose image contains (the germ of) the degeneracy locus. This can be achieved exactly as in the proof of Theorem \ref{thm:1}, by first fixing a frame in $G$, and then extending the partial sections $s'_{j-1}s_{j-1}^{-1}:(S_{m,j},G_0,e)\to (G'_m,G_0,e')$ via Lemma \ref{lemma:nice-family}. 


For $n>m$, consider the map giving the restriction to the $m$-dimensional skeleta, denoted $G'_n \to \prod_{\alpha:[m]\mono[n]}G'_m$. Since $f$ is $m$-simple, an $n$-simplex on $G'$ is determined by the corresponding $n$-simplex on $G$ and $f$-lifts of its $m$-skeleta: the following is a cartesian square,
$$\xymatrix{
G'_n \ar[r] \ar[d] & \prod_{\alpha:[m]\mono[n]}G'_m \ar[d] \\
G_n \ar[r] & \prod_{\alpha:[m]\mono[n]}G_m
}$$
We then define $g_n$ as the pullback section of $\prod_\alpha g_m$ on the right.
%
%
%
It is straightforward to check that the $g_n$ thus defined commute with the faces and the degeneracies, hence inducing the desired dga isomorphism.

Finally, c) follows from b), for $CE(A)\to CE(A')$ is a semi-free acyclic extension, and therefore a quasi-isomorphism, see e.g. \cite[Lemma V.3.7]{gelfand-manin}.
\end{proof}


\begin{corollary}[Easy van Est]\label{cor:easyVE}
Let $G$ be a higher Lie groupoid, suppose that there exists a hypercover $f:G\to M$ to a constant simplicial manifold $M$, and that the fibers of $f_0:G_0 \to M$ are homologically $n$-connected. Then,  
$$H^k(G)\cong H^k(A_G) = \begin{cases}
C^\infty(M),& k=0 \\
0, & 0<k\leq n.
\end{cases} $$
\end{corollary}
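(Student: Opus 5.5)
The plan is to compute both sides separately and compare them with the constant simplicial manifold $M$, whose global and infinitesimal cohomologies are trivial to describe. Since $M$ is constant, $C(M)$ has $\delta$ alternating between $0$ and $\id$, so $H^0(M)=C^\infty(M)$ and $H^k(M)=0$ for $k>0$. Likewise $A_M=0$, so $CE(A_M)=C^\infty(M)$ sits in degree zero.

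The first step treats the infinitesimal side through the coskeletal tower of $f$:
$$G\to\cdots\to\cosk_1(G/M)\xto{f^1}\cosk_0(G/M)\xto{f^0}M.$$
For every $m>0$ the map $f^m$ is $m$-simple, so Proposition \ref{prop:hypercover} c) says $f^m$ induces a quasi-isomorphism on Chevalley--Eilenberg algebras. Because $f^m_n$ is an isomorphism for $n<m$, the tangent complexes stabilize in each degree. Hence $CE(A_G)$ and $CE(A_{\cosk_m})$ agree in degrees $\le m$, and $H^k(A_G)\cong H^k(A_{\cosk_0(G/M)})$ for every $k$, by taking $m>k+1$.

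The next step is to identify $\cosk_0(G/M)$. It is the relative \v{C}ech nerve of the surjective submersion $f_0:G_0\to M$, that is, the nerve of the submersion groupoid $G_0\times_M G_0\toto G_0$. Its Lie algebroid is the vertical foliation $T^{f_0}G_0=\ker Df_0$, so $CE(A_{\cosk_0})$ is the foliated de Rham complex $\Omega(\ker Df_0)$. Proposition \ref{prop:gdcomparison} justifies this identification with the classical algebroid. Since the fibers of $f_0$ are homologically $n$-connected, the fiberwise de Rham complex has $H^0=C^\infty(M)$ and $H^k=0$ for $0<k\le n$. I would prove this by a fiberwise Poincaré lemma / Leray-type argument with compact support arguments, or cite it from the classical van Est theory (Crainic). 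I expect this to be the main obstacle, because one has to pass from pointwise connectivity of the fibers to vanishing of foliated cohomology with smooth dependence on parameters. Note that the paper's "homologically $n$-connected" hypothesis is exactly what the classical argument uses.

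For the global side, I would invoke cohomological descent for hypercovers as cited from \cite{dhos}: $f^*:H(M)\to H(G)$ is an isomorphism. This gives $H^0(G)=C^\infty(M)$ and $H^k(G)=0$ for all $k>0$, with no connectivity needed. Comparing the two computations gives the stated isomorphism in degrees $k\le n$, and it is induced by $\ve$, since $\ve$ is the identity on $C^\infty(G_0)$-invariants in degree $0$.
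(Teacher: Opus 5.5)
Your proposal is correct and follows essentially the same route as the paper. It reduces along the coskeletal tower using Proposition~\ref{prop:hypercover} c) for $m>0$ together with the stabilization $G_n\cong\cosk_m(G/M)_n$ for $n\le m$, identifies $\cosk_0(G/M)$ with the nerve of the submersion groupoid of $f_0$ (whose source fibers are the fibers of $f_0$), and concludes via Crainic's classical van Est theorem and cohomological descent. The differences are cosmetic: you phrase the last step as computing the foliated cohomology $H(\Omega(\ker Df_0))$ and apply descent to $f$ directly, whereas the paper applies van Est to $\cosk_0(G/M)$ and descent to each map in the tower.
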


\begin{proof} 
The coskeletal tower of $f$ induces the following diagram of dgas:
$$\xymatrix@C=18pt{
\dots & C_N(\cosk_{m}(G/M)) \ar[d]^{\ve} \ar[l] & \cdots \ar[l] & C_N(\cosk_{0}(G/M)) \ar[d]^{\ve} \ar[l] & C_N(M)=C^\infty(M) \ar[d] \ar[l] \\
\dots  & CE(A_{\cosk_{m}(G/M)}) \ar[l] & \cdots \ar[l] & CE(A_{\cosk_{0}(G/M)}) \ar[l] & CE(0_M)=C^\infty(M) \ar[l] }
$$
The upper arrows are quasi-isomorphisms by the cohomological descent for hypercover, see e.g. \cite{dhos}. The bottom arrows are quasi-isomorphisms, except perhaps the first one, by Proposition \ref{prop:hypercover}.
Moreover, the projection $G\to \cosk_{m}(G/M)$ yields an isomorphism in $k$-cochains, for $k<m$, and therefore, it yields isomorphisms in cohomology 
$$H^k(\cosk_{m}(G/M))\cong H^k(G) \qquad 
H^k(A_{\cosk_{m}(G/M)})\cong H^k(A_G) \qquad k<m-1$$
Writing $G'=\cosk_{0}(G/M)$, we are thus left with showing that $H^k(A_{G'})\cong H^k(M)$ for $0\leq k \leq n$. We use the topological hypothesis. Note that $G'$ is the Lie 1-groupoid induced by the submersion $f_0:G_0\to M$, and its source fibers are those of $f_0$, so we can apply the classical van Est for Lie groupoids and Lie algebroids, \cite[Thm 4]{crainic}, and the proof is completed.
\end{proof}

\subsection{Connectedness, the van Est theorem and applications}

In this final subsection, we describe the topological assumption, state and prove the higher van Est theorem, and provide an application to the Lie theoretic description of shifted symplectic structures.


The following connectedness definition for simplicial manifolds is inspired by the classical van Est for Lie groupoids, see \cite[Thm 4]{crainic}.

\begin{definition} \label{def:connected}
    We say that a simplicial manifold $G$ is {\bf $n$-connected} when the fibers of $d_0:G_{p+1}\to G_p$ are homologically $(n-p)$-connected, for all $0\leq p\leq n$.
\end{definition}

\begin{example}\label{ex:1gdconnected}
If $G$ is the nerve of a Lie groupoid $H\toto M$, the map $d_0:G_{p+1}\to G_p$ is a base-change of the target $t:H \to M$, so they have the same fibers. In a Lie groupoid, source and target fibers are diffeomorphic, so $G$ is $n$-connected if and only if $H$ is homologically source-$n$-connected. Thus, our condition recovers that of classical van Est \cite[Thm 4]{crainic}.
\end{example}


Connectedness of a higher Lie groupoid can be deduced from the fibers of a horn map, which are typically lower dimensional than those of $d_0$, thus providing a more practical characterization.

\begin{lemma} 
If $G$ is a higher Lie groupoid such that the fibers of the last horn map $d_{p+1,p+1}:G_{p+1}\to G_{p+1,p+1}$ are homologically $(n-p)$ connected for all $0\leq p\leq n$, then $G$ is $n$-connected.
\end{lemma}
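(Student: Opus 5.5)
We want to show that the fibers of $d_0:G_{p+1}\to G_p$ are homologically $(n-p)$-connected, knowing this for the fibers of the last horn map $d_{p+1,p+1}$. The plan is to factor $d_0$ through horn spaces and use a fiber-bundle (Serre spectral sequence or Leray--Hirsch type) argument.

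\textbf{Reduction by symmetry.} First reduce to a statement about the last vertex. By reversing the order of vertices one obtains the opposite simplicial manifold $G^{op}$, with $d_i^{op}=d_{p+1-i}$. Under this, $d_0$ corresponds to the last face $d_{p+1}$, and the last horn $\Lambda^{p+1}_{p+1}$ corresponds to the first horn $\Lambda^{p+1}_0$. It is cleaner to stay in $G$, though, and factor
$$d_0:G_{p+1}\xto{d_{p+1,p+1}} G_{p+1,p+1}\xto{\ pr_0\ } G_p,$$
where $pr_0$ picks the $0$-th face of the horn. Both maps are surjective submersions: the first by HG2, and the second as a composite of base changes of horn maps, via the collapse filtration of $\Lambda^{p+1}_{p+1}$ used in the proof of the first proposition of Section 2. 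Since $\Lambda^{p+1}_{p+1}$ collapses onto the face $\delta_0$ through elementary horn attachments of dimension $\le p$, the fiber of $pr_0$ over $g\in G_p$ is the total space of a tower of surjective submersions. Each stage of the tower is a base change of a horn map $G_{q}\to G_{q,k}$ with $q\le p$.

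\textbf{Connectivity of the tower.} The key step is to show that each stage in the tower has homologically $(n-p)$-connected fibers. Choose the collapse so that every attached simplex is filled along its \emph{last} horn, namely $k_r=n_r$. This should be possible because $\Lambda^{p+1}_{p+1}\setminus\delta_0$ consists of the faces containing the last vertex, which can be built up as cones with apex $p+1$. Then each stage is a base change of $d_{q,q}$ with $q\le p$, whose fibers are homologically $(n-q+1)$-connected by hypothesis. Since $n-q+1\ge n-p+1$, this is more than enough. Combined with the $(n-p)$-connected fibers of $d_{p+1,p+1}$, the fiber of $d_0$ becomes the total space of an iterated tower of submersions with $(n-p)$-connected fibers.

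\textbf{Conclusion and main obstacle.} The remaining step is to conclude that such a total space is $(n-p)$-connected. For a surjective submersion $E\to B$ whose fibers are homologically $k$-connected, with $B$ homologically $k$-connected, $E$ is homologically $k$-connected. This follows from the Leray spectral sequence of the submersion: the sheaf $R^if_*\R$ vanishes for $0<i\le k$ and equals $\R$ for $i=0$, even without local triviality, since submersions are locally trivial only up to open sets but the local fibers are open subsets of the actual fibers. Cohomology with real coefficients suffices, since that is the notion used in classical van Est.

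The main obstacle is this last step, because a surjective submersion need not be a fibration. I would handle it as Crainic does in \cite[Thm 4]{crainic}, by using the de Rham / Leray argument with sheaf cohomology along the fibers rather than homotopy lifting. A secondary point to check is that the collapse of the last horn onto $\delta_0$ can indeed be arranged using only last-horn attachments.
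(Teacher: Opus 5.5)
Your proposal follows the paper's proof. The paper factors $d_0$ through the same tower of last-horn attachments, adding the simplices of $\Delta^{p+1}$ that contain both $0$ and the last vertex in order of nondecreasing dimension (which settles your secondary point), and then invokes that surjective submersions with homologically $k$-connected fibers are stable under base change and composition. The paper takes that stability as known without proof, so you should cite it rather than rely on your stalk heuristic, which is not a valid proof as written. For non-proper submersions the stalk $\varinjlim_{U\ni b}H^i(f^{-1}(U))$ of $R^if_*\R$ need not equal the cohomology of the fiber $f^{-1}(b)$.
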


\begin{proof} 
We can go from $\Delta^p$ to its 0-face by a sequence of elementary collapses along a last horn, 
$$\Delta^p=F_a\supset \Lambda^p_p=F_{a-1}\supset\dots\supset F_r=F_{r-1}\cup_{\partial\Lambda^{n_r}_{n_r}}\Delta^{n_r}\supset F_{r-1}
\supset\dots\supset F_0=d_0(\Delta^p)\cong\Delta^{p-1}$$ 
For instance, take the set $\sigma_r\subset \Delta^p$ of all the non-degenerate simplices containing $0$ and $p$, order the $\sigma_r$ so that $\dim(\sigma_{r-1})\leq\dim(\sigma_r)$, and define $F_r=d_0(\Delta^p)\cup\sigma_1\cup\dots\cup\sigma_r$.
This yields a tower 
$$G_p \to G_{p,p}\to \dots\to 
G^{F_r}\to \dots\to d_0(G_p)\cong G_{p-1}$$
where the $r$-th step $G^{F_r}\to G^{F_{r-1}}$ is a base-change of the surjective submersion $G_{n_r}\to G^{\Lambda^{n_r}_{n_r}}$. 
Since submersions with homologically $k$-connected fibers are stable under base-change and compositions, the result follows.
\end{proof}


We are finally in a position to prove the van Est theorem for higher Lie groupoids. It is a far-reaching generalization of classical van Est for Lie groupoids \cite{crainic}, and even in the Lie groupoid case, our approach offers an alternative geometric proof building on our alternative characterization of infinitesimal cochains $CE(A_G)=C_N(G)/\J$.

\begin{theorem}[van Est isomorphism]\label{thm:3}
Let $G$ be an $n$-connected higher Lie groupoid, $n\geq 0$.
Then, the van Est map $\ve$ induces isomorphism in cohomology $H^k(G)\cong H^k(A_G)$ for every $k\leq n$.
\end{theorem}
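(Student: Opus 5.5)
Our plan is to compute $H(\mathrm{Tot}\,D)$ for the \nameD\ $D$ of Definition \ref{def:D} in two ways, using its two augmentations. By Proposition \ref{prop:rows}, the horizontal augmentation $CE(A_G)\to D$ induces $H(A_G)\cong H(\mathrm{Tot}\,D)$. We will show that, in total degrees $\leq n$, the vertical augmentation $C(G)\to D$, given by $d_0^*:C^p(G)\to C_{p,0}=D_{p,0}$, also induces an isomorphism. Remark \ref{rmk:horizVE} shows that for a cocycle $f\in C^k_N(G)$ the classes of $d_0^*f$ and $d_{last}^*f$ agree in $H(\mathrm{Tot}\,D)$, and $d_{last}^*f$ represents the image of $\ve(f)$ under the horizontal augmentation. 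So the triangle in cohomology commutes, and $\ve$ is an isomorphism for $k\leq n$. We may freely replace $C(G)$ by $C_N(G)$, since they are quasi-isomorphic through the retraction $r$.

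The core of the argument is the column computation. Fix $p$. The $p$-th column of $D$ is $CE(A_{G^{(p)}})$, where $G^{(p)}=Dec(G)_{p,\bullet}$, an iterated path space. Its objects are $Dec(G)_{p,0}=G_{p+1}$, and the vertical augmentation is $d_0$ to the constant simplicial manifold $G_p$. If $G$ is a higher Lie groupoid, iterating the Lemma that $P_0G$ is a higher groupoid with $e:P_0G\to G_0$ a hypercover shows that $G^{(p)}\to G_p$ is a hypercover onto a constant simplicial manifold, with $f_0=d_0:G_{p+1}\to G_p$. The relevant vertex bookkeeping under the identification $Dec(G)_{pq}\cong G_{p+q+1}$ must be checked here. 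By $n$-connectivity, the fibers of $d_0:G_{p+1}\to G_p$ are homologically $(n-p)$-connected. Corollary \ref{cor:easyVE} then gives $H^q(\text{column }p)=C^\infty(G_p)$ for $q=0$ and $0$ for $0<q\leq n-p$, with the $q=0$ part induced by the vertical augmentation.

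Next we run the spectral sequence of the double complex filtered by the horizontal degree $p$. Its $E_1$ page is $C^\infty(G_p)$ in row $q=0$ and vanishes for $0<q\leq n-p$. Hence the map $C(G)\to\mathrm{Tot}\,D$ induces an isomorphism on $E_1$ in the region $p+q\leq n$. A standard comparison argument, which amounts to applying the mapping-cone spectral sequence in total degrees $\leq n$, then yields $H^k(G)\cong H^k(\mathrm{Tot}\,D)$ for $k\leq n$. For $k=n+1$ we only obtain injectivity, which we do not need. Boundedness poses no problem, because the double complex lives in the first quadrant.

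We expect the main obstacle to be the identification of the columns. One must check that each column $Dec(G)_{p,\bullet}$, with its vertical augmentation, really is a hypercover over $G_p$ satisfying the hypotheses of Corollary \ref{cor:easyVE}. This requires the higher groupoid horn fillers, as in the Lemma on $P_0G$ and \cite{dhs}. One must also check that the vertical differentiation in Definition \ref{def:D} is exactly the Chevalley--Eilenberg algebra of that column, so that Corollary \ref{cor:easyVE} applies with the correct connectivity range $n-p$ in column $p$. The remaining steps are formal.
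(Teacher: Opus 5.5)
Your proposal is correct and follows essentially the same route as the paper: exact rows of $D$ via Proposition~\ref{prop:rows}, vanishing vertical cohomology in the sector $0<p+q\leq n$ via Corollary~\ref{cor:easyVE} applied to each column $Dec(G)_{p,\bullet}\to G_p$ with connectivity $n-p$, a standard double-complex comparison, and Remark~\ref{rmk:horizVE} to identify the resulting isomorphism with $\ve$. Your version only makes explicit bookkeeping that the paper leaves implicit: that the columns are hypercovers over $G_p$, and, in the cone argument, the small extra check that $d_0^*:C^\infty(G_{n+1})\to D_{n+1,0}$ is injective (since $d_0s_0=\id$), which is what kills the total-degree-$n$ term in column $n+1$.
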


\begin{proof}
We recall from the beginning of the section the following diagram:
$$
\xymatrix{ CE(A_G) \ar[r] & D \\ &\ar[u] C(G) } 
\qquad\qquad
\xymatrix{
H(A_G) \ar[r]^\cong & H(Tot D)  \\ & H(G) \ar[u]
\ar@{-->}[ul]^{\ve}}
$$
By Proposition \ref{prop:rows}, the rows of $D$ are exact, so $H(A_G)\xto\cong H(D)$. Now, by the Easy van Est \ref{cor:easyVE}, the double complex $D$ vertically augmented by $C(G)$ has trivial vertical cohomology in the triangular sector of bidegrees $p,q$ with $0<p+q\leq n$. The result now follows by a standard argument in double complexes, either by using spectral sequences, or by directly constructing a zig-zag isomorphism (see Remark \ref{rmk:horizVE}).
\end{proof}

\begin{remark} 
The hypothesis for our theorem admits simple variants and generalizations. For instance, for our proof to work, it is enough that $G$ is a simplicial manifold such that the $0$-horn spaces $G_{n,0}$ are smooth and the restriction maps $G_n\to G_{n,0}$ are surjective submersions. Besides, if we replace $G$ by its {\bf reverse} $G^{rev}$, given by $G^{rev}_n=G_n$, $d^{rev}_i=d_{n-i}:G_n\to G_{n-1}$ and $s^{rev}_j=s_{n-j}:G_n\to G_{n+1}$, we clearly have $H(G)\cong H(G^{rev})$, and we can achieve the theorem from the following alternative topological assumption: the last face map $d_{p}:G_{p}\to G_{p-1}$ has $(n-p)$-connected fibers for every $p$. 
\end{remark}

\begin{remark}[Integral formulas for 1-cocycles]
Let $G$ be a simplicial manifold and $f\in C^\infty_N(G_1)$ with $\delta f=0$. Denote $\Delta_1=[0,1]$ the geometric 1-simplex and $P(\Delta_1)$ the nerve of its pair groupoid whose differentiation yields differential forms $CE(A_{P(\Delta_1)})=\Omega(\Delta_1)$. For any smooth simplicial map $F_\bullet: P(\Delta_1)_\bullet \to G_\bullet$, we have the following integral formula
\[ f(F_1(1,0)) = \int_{\Delta_1} Lie(F)^*\ve(f). \]
This also allows to integrate infinitesimal 1-cocycles, generalizing a formula for Lie groupoids in \cite{wx}. Higher dimensional simplicial integral formulas, generalizing those of \cite[Sec. 2.5]{cms}, will be explored elsewhere. 
\end{remark}


There is an analogous theorem at the level of forms for the van Est map $\Omega(G)\to W(A_G)$ of Corollary \ref{cor:thm2forms}. 
Just as for defining $D$ above, we can consider the triple complex $C^{pqr} = \Omega^r(Dec(G)_{pq})$, where the extra differential is de Rham, and define $D^{pqr}$ to be its differentiation on the $q$-direction using the corresponding ideal $\J^\Omega$. 
For each $r$, we thus obtain an interpolating double complex analogous to $D$ between simplicial $r$-forms on $G$ and $(\bullet,r)$-elements of the Weil algebra of $A_G$,
$$ \xymatrix{ W^{\bullet r}(A_G) \ar[r] &  D^{\bullet \bullet r} \\ &\ar[u] \Omega^r(G) } $$
As in the case of cochains, the rows of $D^{\bullet \bullet r}$ are also exact, because of the horizontal homotopy, and the induced map in cohomology $H(\Omega^r(G))\to H(W^{\bullet,r}(A_G))$ coincides with the one induced by the van Est map. Under the same assumption of Corollary \ref{cor:easyVE}, one analogously obtains that
$$ H^k( D^{p\bullet r}) =  \begin{cases}
\Omega^r(G_p),& k=0 \\
0, & 0<k\leq n.
\end{cases} $$
We thus conclude the following:

\begin{corollary}\label{cor:vEisoforms}
If $G$ is an $n$-connected higher Lie groupoid, then 
for each fixed $r\geq 0$, the van Est map $\ve:\Omega^r(G_p) \to W^{p,r}(A_G)$ for $r$-forms induces an isomorphism in $(\bullet,r)$-cohomology $H^k(\Omega^r(G_\bullet))\simeq H^k(W^{\bullet,r}(A_G))$ for degrees $0\leq k \leq n$.
\end{corollary}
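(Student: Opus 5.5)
The plan is to repeat the proof of Theorem \ref{thm:3}, with functions replaced by $r$-forms and one form degree $r$ fixed throughout. For each $r$, consider the double complex $D^{\bullet\bullet r}$: it is obtained from $C^{pqr}=\Omega^r(Dec(G)_{pq})$ by passing to vertically normalized forms and quotienting by the vertical differentiating ideal $\J^{\Omega,v}$. We use two augmentations. The horizontal one is $W^{\bullet r}(A_G)\to D^{\bullet\bullet r}$, given by pulling back along $d_{q+1}:Dec(G)_{0q}\to G_q$ and then differentiating. The vertical one is $\Omega^r(G_\bullet)\to D^{\bullet\bullet r}$, given by pulling back along $d_0:Dec(G)_{p0}\to G_p$. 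By Corollary \ref{cor:thm2forms}, the $p$-th column of $D^{\bullet\bullet r}$ is $W^{\bullet,r}$ of the higher algebroid of the column $Dec(G)_{p\bullet}$. Since the de Rham differential commutes with all pullbacks, we may fix $r$ and forget $d$; this is why the statement is only about $(\bullet,r)$-cohomology.

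\emph{Rows.} We reuse the operator $\theta=\sum_k(-1)^{q+k}(c^h_k)^*$ from Proposition \ref{prop:rows}, now acting on forms. The horizontal contractions commute with the vertical degeneracies, so $\theta$ preserves vertically normalized forms. It also preserves $J^{\Omega,v}$: pullback along a map sending $S^v_j$ into $S^v_j$ keeps $(I^\Omega_{j})^2$, since pullback is multiplicative and $I^\Omega_j=\ker s_{j-1}^*$ is preserved. The commutator computation $\theta\delta^v-\delta^v\theta$ goes through verbatim; only the $i=q$ terms survive, and they lie in $J^{\Omega,v}$ for the same simplicial reasons. Hence the augmented rows are exact, and $W^{\bullet r}(A_G)\to \mathrm{Tot}\,D^{\bullet\bullet r}$ is a quasi-isomorphism. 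Compatibility with the van Est map uses the zig-zag cochain $\sum_{p+q=k-1}(\omega,p,q)$ of Remark \ref{rmk:horizVE}. That computation is purely simplicial, so it applies unchanged to forms.

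\emph{Columns.} This is the main step. For a higher Lie groupoid, each augmented column $Dec(G)_{p\bullet}\to G_p$ is a hypercover to a constant simplicial manifold. By $n$-connectivity the fibers are homologically $(n-p)$-connected, which is what the sector $p+q\le n$ needs. We need a forms version of Corollary \ref{cor:easyVE}, namely
$$H^k(W^{\bullet,r}(A_{Dec(G)_{p\bullet}}))\cong \Omega^r(G_p) \text{ for } k=0, \text{ and } 0 \text{ for } 0<k\le n-p,$$
and similarly on the global side. Following the proof of Corollary \ref{cor:easyVE}, we run through the coskeletal tower. For an $m$-simple hypercover with $m>0$, the section $g$ built in Proposition \ref{prop:hypercover} splits $A'\cong A\oplus D^m(V)$. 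Passing to the shifted tangent $\T$, or equivalently to $W$ via Remark \ref{rmk:superWA}, gives $W(A')\cong W(A)\otimes W(\text{acyclic contractible piece})$. Fixing the form degree $r$, the added generators $V^*\to V^*$ and $d_vV^*\to d_vV^*$ still come in acyclic pairs. A filtration by polynomial degree in these generators, compatible with fixing $r$, shows that each row $W^{\bullet,r}$ remains quasi-isomorphic. The bottom step $\cosk_0$ is the submersion groupoid of $f_0:G_0\to M$. For it we invoke the classical van Est theorem for forms on Lie groupoids, or its Weil-algebra version, with connected fibers; alternatively we reduce it to Crainic's theorem applied to the representation $\Lambda^r T^*$ through the Bott--Shulman/representation-up-to-homotopy description. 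I expect this base case to be the real obstacle, since the forms van Est for submersion groupoids is not stated earlier in the paper. If it causes trouble, a fallback is to note that the submersion groupoid is Morita equivalent to $M$, and that both sides compute $\Omega^r(M)$ in degree $0$ and vanish in degrees $1$ through $n$ by fiberwise de Rham with compact-free homotopy.

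\emph{Conclusion.} With exact rows and with vertical cohomology vanishing in the sector $0<p+q\le n$, a standard spectral sequence or zig-zag argument gives that $H^k(\Omega^r(G_\bullet))\to H^k(\mathrm{Tot}\,D^{\bullet\bullet r})\cong H^k(W^{\bullet,r}(A_G))$ is an isomorphism for $k\le n$. By the remark above, this map is the van Est map.
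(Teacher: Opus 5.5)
Your proposal is correct and follows the same route as the paper. Both build the interpolating double complex $D^{\bullet\bullet r}$ by vertically differentiating $\Omega^r(Dec(G)_{pq})$, get exact rows from the horizontal contraction $\theta$, control the columns in the sector $p+q\le n$ by a forms analogue of Corollary~\ref{cor:easyVE}, and conclude with the standard double-complex argument. The paper only says the column computation goes through ``analogously''. Your filtration argument for the $m$-simple steps of the coskeletal tower, and your reduction of the base case to the Weil-algebra van Est theorem of Arias Abad--Crainic for the submersion groupoid of $d_0:G_{p+1}\to G_p$, supply what the paper leaves implicit.
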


%


We finish this section applying our higher van Est theory to the Lie theoretic description of so-called \emph{shifted symplectic structures} \cite{getzler,PTVV13} (see also \cite{cueca-zhu,cumava}). 

Let $G$ be a higher Lie groupoid. An {\bf $m$-shifted $k$-form} on $G$ is an element $\omega \in Tot^{k+m}(\Omega_N^\bullet(G_\bullet))$ of degree $m+2$ in the total de Rham complex which is a sum of elements of bidegree $(p,r)$ with $r\geq k$. Denoting $\dtot$ the total differential, an {\bf $m$-shifted symplectic form} is an $m$-shifted $2$-form $\omega$ such that $\dtot\omega =0$ and satisfying a non-degeneracy condition \cite{getzler}, as follows. Writing $\omega_m$ for the bidegree $(m,2)$ component of $\omega$, the induced map between tangent and cotangent complexes $\lambda^{\omega}:(A_\bullet, \partial)\to (A^*[m]_\bullet,\partial^*)$ via  
\[ \langle \lambda^\omega(v),u\rangle = \underset{\sigma\in Sh(l,m-l)}{\sum} \text{sgn}(\sigma) \ \omega_m(D(s_{\sigma(m-1)}\cdots s_{\sigma(l)})v, D(s_{\sigma (l-1)} \cdots s_{\sigma(0)})u), \ v\in A_l, u\in A_{m-l}, \]
must be a quasi-isomorphism. 
A {\bf gauge transformation} (see \cite[Prop. 2.35]{cueca-zhu}) of an $m$-shifted symplectic form is $\omega \mapsto \omega+\dtot\eta$ where $\eta$ is an $(m-1)$-shifted $2$-form.
Similarly, at the infinitesimal level, consider $A$ a higher Lie algebroid with Weil double complex $W^{\bullet,\bullet}(A)$. An $m$-shifted (infinitesimal) $k$-form is an element in the total complex $\un{\omega} \in Tot^{k+m}(W^{\bullet,\bullet}(A))$ whose homogeneous components of bidegree $(p,r)$ have $r\geq k$. Denoting also by $\dtot$ the corresponding total differential, an  {\bf (infinitesimal) $m$-shifted symplectic form} (\cite{PySa}) is an $m$-shifted $2$-form $\un{\omega}$ on $A$ satisfying $\dtot\un{\omega}=0$ and a non-degeneracy condition stating that the induced map at the level of tangent and cotangent complexes, $\lambda^{\un{\omega}}:(A_\bullet, \partial)\to (A^*[m]_\bullet,\partial^*)$, is a quasi-isomorphism. Supergeometrically, we can regard $\un{\omega}$ as a $2$-form on the supermanifold $\M_A$ underlying $A$ (recall Remark \ref{rmk:superA}) and $\lambda^{\un{\omega}}$ is induced by restricting $\un{\omega}$ to the  tangent spaces at $M\subset \mathcal{M}_A$. 
A {\bf gauge transformation} of an $m$-shifted symplectic form is $\un{\omega}\mapsto \un{\omega} + \dtot\un{\eta}$ where $\un{\eta}$ is an $(m-1)$-shifted $2$-form on $A$.

\begin{corollary}\label{cor:shiftedsympl}
Let $G$ be a higher Lie groupoid with higher Lie algebroid $A_G$. The van Est map for forms, $\Omega^q(G_p)\to W^{p,q}(A_G)$, sends $m$-shifted symplectic forms on $G$ to $m$-shifted symplectic forms on $A_G$. Moreover, if $G$ is $m$-connected, this correspondence is $1:1$ modulo the corresponding gauge transformations,
\[ \{\text{$m$-shifted symplectic $\omega$ on $G$}\}/\text{gauge} \overset{\sim}{\to} \{\text{$m$-shifted symplectic $\un{\omega}$ on $A_G$}\}/\text{gauge}.\]
\end{corollary}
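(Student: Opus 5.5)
The plan is to treat the two assertions separately. For the first, I will use that $\ve^\Omega$ is a morphism of bi-dgas (Corollary \ref{cor:thm2forms}), so it commutes with both $\delta$ and $d$, and hence with $\dtot$. It also preserves bidegrees, sending $\Omega^r_N(G_p)$ to $W^{p,r}(A_G)$. Consequently an $m$-shifted $2$-form $\omega$ goes to an $m$-shifted $2$-form $\un\omega=\ve^\Omega(\omega)$, and $\dtot\omega=0$ implies $\dtot\un\omega=0$. The substantive point is non-degeneracy. I would show that $\lambda^{\un\omega}=\lambda^\omega$ as maps $(A_\bullet,\partial)\to(A^*[m]_\bullet,\partial^*)$. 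Both maps depend only on the bidegree $(m,2)$ component, so it suffices to work at bidegree $(m,2)$.

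Concretely, I fix a frame and write $\omega_m$ in normal coordinates as $\sum f\, x_{P,L}\,dx_{P',L'}$ with $P\ast P'$ a covering of $\{1,\dots,m\}$. Terms with overlaps lie in $J^\Omega$. Terms with some $x$-factor (i.e. $P\neq\emptyset$) vanish when restricted to $G_0$, so they contribute nothing to the value of a $2$-form at points of $G_0$. The pairing $\lambda^\omega$ evaluates $\omega_m$ at $G_0$ on the vectors $D(s_{\dots})v$ and $D(s_{\dots})u$. These vectors lie in the summands $A_l\otimes e_\beta$ with $\beta$ a shuffle-complement, which by Remark \ref{rmk:DK2} is exactly the block $\alpha$ indexed by a shuffle. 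So only the coefficients of $dx_{\alpha,\ell}\,dx_{\alpha',\ell'}$ with $(\alpha,\alpha')$ a partition into two blocks contribute. Reordering these by allowed transpositions, and using the relations $x_{P,L}\equiv_{\hat J}-x_{\tau_j(P),L}$ together with their $d$-versions, collects them into $\pm\, d_v\xi_l\, d_v\xi_{m-l}$. The signs of the allowed transpositions should match exactly the shuffle signs $\mathrm{sgn}(\sigma)$ in the formula for $\lambda^\omega$.

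On the Weil side, restricting $\un\omega$ to tangent vectors along $M\subset\M_A$ (Remark \ref{rmk:superA}) reads off the coefficient of $d_v\xi\, d_v\xi'$, giving the same pairing. Hence $\lambda^{\un\omega}=\lambda^\omega$, and one is a quasi-isomorphism exactly when the other is. I expect this sign bookkeeping to be the main obstacle. I would first check it in the case $m=1$, which reduces to Lie groupoids where it is classical, and then do the general case by induction on the number of allowed transpositions.

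For the second assertion, I use that shifted $2$-forms are closed elements of the truncated total complexes $F^2Tot$, namely the sum of bidegrees $(p,r)$ with $r\geq2$, and that gauge equivalence is equivalence modulo $\dtot$ of $(m-1)$-shifted $2$-forms. So the two quotient sets are, up to the non-degeneracy condition, the cohomology groups $H^{m+2}(F^2Tot\,\Omega_N(G))$ and $H^{m+2}(F^2Tot\,W(A_G))$. The map $\ve^\Omega$ respects the filtration by $r$. On the graded pieces it is the map of Corollary \ref{cor:vEisoforms}, which for fixed $r$ is an isomorphism on $H^k(\Omega^r(G_\bullet))\to H^k(W^{\bullet,r})$ for $k\leq m$. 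The relevant groups occur in total degree $m+2$ with $r\geq2$, so $k=m+2-r\leq m$; degree $m+1$ is also needed for gauge injectivity, and there too the relevant $k\leq m$.

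A spectral sequence (or a five-lemma induction on the finitely many $r$ with $2\leq r\leq m+2$) then gives that $\ve^\Omega$ induces an isomorphism on $H^{m+2}(F^2Tot)$. The usual caveat at the top edge degree needs care: surjectivity uses $k\leq m$ for $H^{k}$ and injectivity uses the $k-1$ level. I will verify that the $n=m$ connectivity range suffices, possibly using only the five-lemma direction needed. Combined with the first part, which shows that non-degeneracy is preserved and reflected, this yields the stated bijection modulo gauge.
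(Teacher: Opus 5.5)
Your route is the paper's. $\ve^\Omega$ is a bigraded chain map, so it preserves closed $m$-shifted $2$-forms. Non-degeneracy follows from $\lambda^\omega=\lambda^{\ve^\Omega(\omega)}$, checked in frame coordinates. The bijection comes from comparing $H^{m+1}$ and $H^{m+2}$ of the form-degree-filtered totalizations $F^2Tot$ via Corollary \ref{cor:vEisoforms}, which is what the paper means by extending the horizontal isomorphisms ``in a standard way''. Your degree count does close up at the edge. Surjectivity on $H^{m+2}(F^2Tot)$ also needs injectivity on $H^{m+3}(F^3Tot)$. The four-lemma induction for that only uses $H^{m+3-r}$ and $H^{m+2-r}$ of $\Omega^r_N(G_\bullet)$ with $r\geq 3$, i.e.\ $k\leq m$.

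The step that is incomplete as stated is the last one, ``non-degeneracy is preserved and reflected, hence bijection''. The cohomology isomorphism only gives a closed $\omega$ with $\ve^\Omega(\omega)=\un\omega+\dtot\un\eta$. Reflection via $\lambda^\omega=\lambda^{\ve^\Omega(\omega)}$ then requires $\un\omega+\dtot\un\eta$, not $\un\omega$, to be non-degenerate. Equivalently, identifying the quotient sets with subsets of $H^{m+2}(F^2Tot)$ presupposes that non-degeneracy depends only on the class. The paper supplies this by citing that $\lambda^{\un\omega+\dtot\un\eta}$ is chain homotopic to $\lambda^{\un\omega}$; only the horizontal differential of the $(m-1,2)$-component of $\un\eta$ enters the $(m,2)$-component.

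Within your setup there is a shorter fix. $\ve^\Omega$ is the quotient by a bigraded ideal, hence surjective in each bidegree. So lift $\un\eta$ to $\eta\in F^2Tot^{m+1}\Omega_N(G)$ and replace $\omega$ by $\omega-\dtot\eta$. This is still closed and satisfies $\ve^\Omega(\omega)=\un\omega$ exactly. Then $\lambda^\omega=\lambda^{\un\omega}$ gives non-degeneracy with no gauge-invariance input, and injectivity modulo gauge is just injectivity on $H^{m+2}(F^2Tot)$.

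One smaller point on the first part. Since $A_0=TG_0$ belongs to the tangent complex, the pairings with $l\in\{0,m\}$ come from mixed terms $dx_{\iota_m,\ell}\wedge dy$, with $y$ coordinates on $G_0$. They do not come from two-block partitions of $\{1,\dots,m\}$. These terms match $d_v\xi\wedge dy$ in $W(A_G)$ without sign issues, but your coordinate analysis should include them.
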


\begin{proof}
The proof is immediate from the previous results and can be summarized as follows. Given $\omega$ on $G$, then $\un{\omega}:=\ve^{\Omega}(\omega)$ is clearly a $\dtot$-closed $m$-shifted $2$-form on $A_G$, since $\ve^\Omega$ is a map of double complexes. Using the definition of $\ve^\Omega$, it follows directly that 
$\lambda^\omega = \lambda^{\un{\omega}}$
so that $\un{\omega}$ is non-degenerate. Indeed, since both maps $\lambda^\omega, \lambda^{\un{\omega}}$ are invariantly defined and pointwise with respect to $G_0$, it is enough to compare them in coordinates induced by any frame, where the equality easily follows. (See the formulas in Section \ref{subsec:forms} for $\ve^\Omega$ on generators.) 

Finally, assume $G$ is $m$-connected and let $\un{\omega}$ be an infinitesimal $m$-shifted symplectic form on $A_G$. Extending the horizontal isomorphisms of Corollary \ref{cor:vEisoforms} to the double complex in a standard way, it follows that there exists a closed $m$-shifted $2$-form $\omega$ on $G$ such that $\ve^\Omega(\omega)=\un{\omega}+\dtot\un{\eta}$ for $\un{\eta}$ of total degree $m+1$. Since $\ve^\Omega$ yields isomorphisms for the horizontal differentials, we can assume $\un{\eta}$ is an$(m-1)$-shifted $2$-form. On the other hand, since $\lambda^{\un{\omega}+\dtot\un{\eta}}$ is chain homotopic to the original $\lambda^{\un{\omega}}$ (\cite[Prop. 6.5]{cumava}) and using $\lambda^\omega=\lambda^{\un{\omega}}$ above, we have that $\omega$ is $m$-shifted symplectic on $G$. This shows that differentiation is surjective onto gauge equivalence classes. To show injectivity up to gauge in the domain, we observe that, by the isomorphism in total cohomology, another integration $\omega'$ must differ from $\omega$ by $\dtot\eta$ with an $\eta$ of total degree $m+1$. Again, since $\ve^\Omega$ yields isomorphisms for the horizontal differentials, we can take $\eta$ to be an $(m-1)$-shifted $2$-form, finishing the proof.
\end{proof}

\begin{example}
Let $H$ be a Lie group with Lie algebra $\h$ endowed with an invariant symmetric non-degenerate pairing $\kappa \in S^2\h^*$. Then, $\un{\omega}:=\kappa \in W^{2,2}(\h)=S^2\h^*$ can be seen as an infinitesimal $2$-shifted symplectic form on $A=\h$. Globally, we consider the nerve $G_\bullet=Ner_\bullet(H)$ of $H$. We observe that, when $H$ is $1$-connected, then $G$ is $2$-connected in the sense of Definition \ref{def:connected} (recall Ex. \ref{ex:1gdconnected}). Hence, from Corollary \ref{cor:shiftedsympl}, we get that $\un{\omega}=\kappa$ can be integrated to a $2$-shifted symplectic form $\omega$ on $G=Ner(H)$, $\ve^\Omega(\omega)=\kappa$. Indeed, a specific such integration $\omega$ was found in \cite[eq. (3.1) and Prop. 3.5]{cueca-zhu} and our argument can be seen as a conceptual explanation for its existence.
\end{example}


\section{Algebraic aspects of differentiation}

We now provide a simple conceptual explanation of the algebraic mechanism underlying our geometric differentiation. This approach continues and complements that of Pridham \cite{Pri10,Pri20} and connects our work with the seminal influential work of Severa \cite{Severa}.
From the abstract algebraic viewpoint, cosimplicial algebras are the global objects, differential graded algebras are the infinitesimal ones, and the differentiation is the left adjoint of the monoidal Dold-Kan denormalization. 


\subsection{Infinitesimal cosimplicial algebras}

Here, we provide explicit formulas for the denormalization in the dual Dold-Kan, introduce infinitesimal cosimplicial algebras, and derive some fundamental properties. The examples to keep in mind are the cochain cosimplicial algebra $X=C(G)$ of a simplicial manifold, and the Chevalley-Eilenberg algebra $Y=CE(A)$ of a higher Lie algebroid.

\medskip


Fix $k$ a field. A {\bf cosimplicial vector space} is a covariant functor $X:\Delta\to \cat{Vect}$, or equivalently, a system $(X^n,d^i,s^j)$ where $X^n$, $n\geq0$, are vector spaces, and
$d^i:X^{n-1}\to X^n$, $s^j:X^{n+1}\to X^n$, $0\leq i,j\leq n$, and cofaces and codegeneracies satisfying the (co)simplicial identities. Our motivating example is $C(G)$, where $G$ is a simplicial manifold, and $k=\R$. 


Given $X$ a cosimplicial vector space, its {\bf normalization} $X_N=(\bigoplus_{n\geq 0} X_N^n,d)$ is the cochain complex given by 
$X_N^n=\bigcap_{j=1}^n\ker (s^{j-1}:X^{n}\to X^{n-1})$ and with differential $d=\sum_{i=0}^{n+1} (-1)^i d^i:X_N^n\to X_N^{n+1}$.
Writing $X^n_D=\sum_{i=1}^{n}d^i(X^{n-1})\subset X^n$, there is a natural cochain complex splitting $X=X_N\oplus X_D$. 
A cochain complex is {\bf connective} if it vanishes in negative degrees.
If $f:X\to X'$ is a cosimplicial morphism, by an abuse of notation, we often write $f$ instead of $N(f):X_N\to X'_N$.


\begin{proposition}[Dual Dold-Kan correspondence]\label{prop:dual-DK}
The normalization $N$ yields an equivalence of categories 
between cosimplicial vector spaces and connective cochain complexes:
$$N:c\cat{Vect}\xto\cong Ch_{\geq0}(\cat{Vect})$$
\end{proposition}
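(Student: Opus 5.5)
The plan is to deduce the statement from the (simplicial) Dold--Kan correspondence recalled in Remarks \ref{rmk:dk1} and \ref{rmk:DK2} by duality, and then to check directly that the functor $N$ defined here coincides with the dual of the simplicial normalization. Concretely, I would build an explicit quasi-inverse, the \emph{denormalization} $K:Ch_{\geq0}(\cat{Vect})\to c\cat{Vect}$, dual to the formulas of Remark \ref{rmk:DK2}. Given a connective cochain complex $(E,d)$, set $(E_K)^n=\bigoplus_{k=0}^n E^k\otimes\Lambda^k(\Z^n)^*$, indexed by subsets $\alpha\subset\{1,\dots,n\}$ with $|\alpha|=k$. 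The codegeneracies $s^j$ and cofaces $d^i$ are the transposes of the maps $s_j,d_i$ of Remark \ref{rmk:DK2}. In these, the coface $d^0$ incorporates the differential $d$ in the components where, dually, $\partial$ appeared. The cosimplicial identities then follow formally from the simplicial ones by transposition, at least for finite-dimensional pieces.

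To avoid finite-dimensionality issues, I would instead verify the identities directly on the basis elements $v\otimes e^\alpha$. The verification is the same combinatorics as in the simplicial case: one traces which subsets $\alpha$ are sent to which under $\sigma_i,\delta_j$.

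Next I would establish the two natural isomorphisms.

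\textbf{$N K(E)\cong E$.} One has $\ker s^{j-1}$ on $(E_K)^n$ consisting exactly of the summands with $j\in\alpha$. So $(E_K)^n_N$ is the single summand $\alpha=\{1,\dots,n\}$, namely $E^n$. On this summand, $\sum(-1)^id^i$ reduces to $d$: the positive cofaces land in summands that cancel or are killed by normalization, and only the $d^0$ component contributes $d$ into the top summand.

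\textbf{$KN(X)\cong X$.} I would define the natural map $\Phi:(X_N)_K\to X$ on the summand indexed by $\alpha$ as $v\mapsto \tilde\alpha^*(v)$. Here $\tilde\alpha:[n]\to[k]$ is the surjection of Remark \ref{rmk:DK2}, and $\tilde\alpha^*$ is the composite of cofaces/codegeneracies realizing it covariantly. Dually, these are the iterated cofaces $d^{i}$, $i>0$, corresponding to the injection complementary to $\alpha$.

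The main obstacle is showing that $\Phi$ is a bijection, i.e.\ the decomposition $X^n=\bigoplus_{\alpha}\theta_\alpha(X_N^{|\alpha|})$. My first attempt would be induction on $n$ using the splitting $X=X_N\oplus X_D$ stated just before the proposition. By induction, $X^{n-1}$ decomposes, and $X_D^n=\sum_{i\geq1}d^i(X^{n-1})$ is covered by the images of the lower summands. Injectivity would then be obtained by applying the codegeneracies $s^{j-1}$ to peel off components, filtering by $|\alpha|$ and using the identities $s^jd^i$.

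If this bookkeeping becomes heavy, the fallback is to cite the standard Dold--Kan theorem for abelian categories. That theorem applies to $\cat{Vect}^{op}$, which is abelian, and under the identification $c\cat{Vect}=s(\cat{Vect}^{op})^{op}$ its Moore normalization corresponds to the quotient by degenerate images. The remaining point would then be to check that this quotient is identified with $X_N$ via the splitting $X=X_N\oplus X_D$. Finally, naturality of $\Phi$ and of $NK\cong\id$ together give the equivalence of categories.
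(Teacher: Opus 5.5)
\textbf{The gap: your comparison map $\Phi$ is not cosimplicial.} On the summand indexed by $\alpha$ you map to $X^n$ by a \emph{single} composite of positive cofaces $\iota_{\alpha*}$, where $\iota_\alpha:[k]\to[n]$ is the injection with image $\{0\}\cup\alpha$. This does not commute with the transposed cosimplicial structure you put on $(X_N)_K$.

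The failure already shows up from degree $1$ to degree $2$. For $b\in X_N^1$, the transposed coface gives $d^1(b\otimes e^1_{\{1\}})=b\otimes(e^2_{\{1\}}+e^2_{\{2\}})$. Hence $\Phi d^1(b\otimes e_{\{1\}})=d^2b+d^1b$, whereas $d^1\Phi(b\otimes e_{\{1\}})=d^1b$. Once the map is the identity on $X_N^1$, compatibility with $d^2$ and $d^1$ forces $b\otimes e^2_{\{2\}}\mapsto d^1b-d^2b$. In general, the correct inverse on the $\alpha$-summand is an alternating, M\"obius-type combination of coface composites, not a single one.

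Your $\Phi$ is natural, and levelwise it is even bijective (it is unitriangular on the summands). So the step you flag as the main obstacle is not where the difficulty lies. Without compatibility with the cofaces, however, $\Phi$ does not give $KN(X)\cong X$ in $c\cat{Vect}$. Relatedly, the surjection $\tilde\alpha:[n]\to[k]$ acts covariantly as a map $X^n\to X^k$, built from codegeneracies. That is the opposite direction to the one your $\Phi$ needs.

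\textbf{How to fix it.} Define the natural map in the other direction,
$$X\to(X_N)_K,\qquad f\mapsto\sum_\alpha\pi_N(\tilde\alpha_*f)\otimes e^n_\alpha,$$
where $\tilde\alpha_*:X^n\to X^{|\alpha|}$ is the codegeneracy composite and $\pi_N$ is the projection onto $X_N$ along $X_D$.
\begin{itemize}
\item This is the transpose of the classical map $\bigoplus_\theta NV_k\to V_n$ given by $\theta^*$, and it is the isomorphism $\phi(f)=\sum_\alpha f_\alpha\otimes e_\alpha$ recorded in Remark \ref{rmk:dual-denormalization}.
\item Compatibility with codegeneracies is just functoriality.
\item Compatibility with cofaces uses $\pi_N(X_D)=0$ together with the cosimplicial identities.
\item Bijectivity is dual to the decomposition $V_n=\bigoplus_\theta\theta^*(NV_k)$.
\end{itemize}

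\textbf{Your fallback is sound.} Applying Dold--Kan in the abelian category $\cat{Vect}^{op}$ is essentially what the paper does. It gives no proof, treats the statement as classical, and only writes out $K$ (by the same transposed formulas you propose) and the isomorphism $\phi$.

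One correction to that fallback: under duality, the Moore complex $\bigcap_{i>0}\ker d_i$ of $\cat{Vect}^{op}$ becomes the quotient $X/X_D$ by images of the \emph{positive cofaces}, not by degenerate images. The degenerate-quotient normalization is what becomes $X_N=\bigcap_j\ker s^{j-1}$ directly. Either one is identified with $X_N$ through the splitting $X=X_N\oplus X_D$.
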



This result is classical, although explicit formulas for the denormalization functor $K$ are seldom written out. We provide them below, following \cite{cc}, as they will help us clarify the monoidal version. While $N$ collapses redundant structure, $K$ freely recreates it out of the essential information. 

\begin{remark}[Dual DK formulas]\label{rmk:dual-denormalization}
Given $Y=(\bigoplus_{n\geq0} Y^n,d)$ a cochain complex, its {\bf denormalization} $(Y_K,d^i, s^j)$ is the cosimplicial vector space  where $Y_K^n=\bigoplus_{k=0}^n Y^k\otimes \Lambda^k(\Z^n)$.
Writing $\{e^n_1,\dots,e^n_n\}$ for the basis in $\Z^n$, $e^n_\alpha=e^n_{\alpha_1}\wedge\dots \wedge e^n_{\alpha_k}$, $\alpha_i<\alpha_{i+1}$, and $f\in Y^k$, the cofaces and codegeneracies are 
$$
s^{j-1}(f\otimes e^n_\alpha)=\begin{cases}
f\otimes e^{n-1}_{\sigma_j(\alpha)} & j \notin \alpha \\
0 & j\in \alpha
\end{cases} \quad 
d^i(f\otimes e^n_\alpha)=\begin{cases}
f\otimes e^{n+1}_{\delta_i(\alpha)} & i>0,\ i\notin \alpha\\
f\otimes (e^{n+1}_{\delta_i(\alpha)}+ e^{n+1}_{\delta_{i+1}(\alpha)}) & i>0,\ i\in \alpha\\
f\otimes e^{n+1}_{\delta_1(\alpha)} + df\otimes e^{n+1}_1\wedge e^{n+1}_{\delta_1(\alpha)} & i=0
\end{cases}$$
Writing $X=Y_K$, it follows that $f\otimes e^n_\alpha\in I_{n,j}=\ker s^{j-1}$ if and only if $j\in \alpha$, that $X_N\cong Y^n\otimes\Lambda^n(\Z^n)$, and  $X_D=\bigoplus_{k=0}^{n-1}Y^k\otimes\Lambda^k(\Z^n)$. The natural isomorphism $\phi:X\cong (X_N)_K$, is given by $\phi(f)=\sum_\alpha f_\alpha\otimes e_\alpha$,  where $f_\alpha=\sum_{\beta\subset \alpha} (-1)^\beta s^\beta(f)$.
\end{remark}


The tensor product of cosimplicial vector spaces is defined degreewise, while the tensor product of cochain complexes is the usual one. 
The dual Dold-Kan \ref{prop:dual-DK} is not a monoidal equivalence, the functors $N$ and $K$ do not preserve tensor products on the nose, even though they do so at the homotopy level, see e.g. \cite{cc}. Writing $X$ for a cosimplicial space and $Y$ for a connective cochain complex, the natural maps controlling the failure of $N$ and $K$ to be monoidal are, respectively, the {\bf Alexander-Whitney} map and the {\bf shuffle} map: 
$$aw_X:X_N\otimes X_N \to (X\otimes X)_N
\qquad 
aw_X(f\otimes g)= (d^{p+1})^q(f) \otimes (d^0)^p(g) \qquad f\in X^p_N,\ g\in X^q_N.$$
$$sh_Y:Y_K\otimes Y_K \to (Y\otimes Y)_K \qquad sh_Y((f\otimes e^n_\alpha)\otimes (g\otimes e^n_\beta))= (f\otimes g)\otimes (e^n_\alpha\wedge e^n_\beta) 
\qquad f\in Y^p,\ g\in Y^q.$$


\begin{proposition}\label{prop:overlapping}
Let $X$ be a cosimplicial vector space, $Y=X_N$ its normalization, and $\JJ_X$ the kernel of the (normalized) shuffle map $sh_{X_N}:(X\otimes X)_N\to X_N\otimes X_N$. Then:
\begin{enumerate}[a)]
    \item $(X\otimes X)_N=(Y_K\otimes Y_K)_N$ is spanned by $(f\otimes e_\alpha)\otimes(g\otimes e_\beta)$ with $(\alpha,\beta)$ covering $\{1,\dots,n\}$. 
    \item $\JJ_X\subset (X\otimes X)_N=(Y_K\otimes Y_K)_N$ is spanned by: 
    \begin{itemize}
        \item the {\bf overlapping tensors} $(f\otimes e_\alpha)\otimes(g\otimes e_\beta)$, where $(\alpha,\beta)$ is a covering of $\{1,\dots,n\}$ with overlap; and 
        \item $(f\otimes e_\alpha)\otimes(g\otimes e_\beta)-
(f\otimes e_{\tau_k(\alpha)})\otimes(g\otimes e_{\tau_k(\beta)})$,
where $\alpha,\beta$ is a partition and $\tau_k=\tau_{k,k+1}$ is an {\bf allowed transposition}, namely one such that $k,k+1$ are not both in $\alpha$ nor $\beta$.
    \end{itemize}
    \item As a differential subspace of $(X\otimes X)_N$, $\JJ_X$ is spanned just by the overlapping tensors.
    \item $sh_{X_N}$ is left inverse to $aw_{X}$, so the sequence splits, and $(X\otimes X)_N\cong (X_N\otimes X_N)\oplus \JJ_X$:
$$\xymatrix{
0 \ar[r] & \JJ_X \ar[r] & (X\otimes X)_N \ar[r]^{sh_{X_N}} &  X_N\otimes X_N \ar[r] \ar@/^1pc/[l]^{aw_X} & 0}$$
 \end{enumerate}
\end{proposition}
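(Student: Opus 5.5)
We reduce everything to the denormalized model. Using the natural isomorphism $\phi:X\cong Y_K$ of Remark \ref{rmk:dual-denormalization}, with $Y=X_N$, we identify $X\otimes X$ with $Y_K\otimes Y_K$ degreewise. In degree $n$ this space is spanned by the elementary tensors $(f\otimes e_\alpha)\otimes(g\otimes e_\beta)$ with $\alpha,\beta\subset\{1,\dots,n\}$. The codegeneracies act diagonally, and by the formulas in Remark \ref{rmk:dual-denormalization} such an elementary tensor is killed by $s^{j-1}$ exactly when $j\in\alpha\cup\beta$; otherwise it is sent to another elementary tensor. Distinct pairs $(\alpha,\beta)$ give linearly independent tensors, and $s^{j-1}$ maps the basis tensors with $j\notin\alpha\cup\beta$ injectively. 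So the joint kernel $(X\otimes X)_N$ is spanned by those with $\alpha\cup\beta=\{1,\dots,n\}$, which proves a). This is the tensor analogue of the monomial description in Proposition \ref{prop:generators}.

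For d) and b) we compute $sh_{X_N}$ explicitly. We take the normalized shuffle map to be the composite of $N(sh)$ with the canonical isomorphism $(X_N\otimes X_N)_K{}_N\cong X_N\otimes X_N$, which picks out the top component $\Lambda^n(\Z^n)$. On a covering tensor it gives $(f\otimes g)\,e_\alpha\wedge e_\beta$. This vanishes if $\alpha\cap\beta\neq\emptyset$. If $(\alpha,\beta)$ is a partition, it equals $\pm f\otimes g$, where the sign is the sign of the shuffle permutation $(\alpha,\beta)$.

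Next we compute $aw_X(f\otimes g)$ for $f\in Y^p$, $g\in Y^q$. Iterating the coface formulas, $(d^{p+1})^q(f\otimes e^p_{\iota_p})=f\otimes e_{\{1..p\}}$. The iterate $(d^0)^p(g\otimes e_{\iota_q})$ equals $g\otimes e_{\{p+1..p+q\}}$ plus terms containing $dg\otimes e_1\wedge\cdots$, and these correction terms overlap with $\{1..p\}$ whenever $p\geq1$. This is the same mechanism as in Lemma \ref{lemma:local-d0} and Proposition \ref{prop:cup-product}. Hence $sh\circ aw=\id$, which gives d) and shows that $\JJ_X$ is the complement of $aw(X_N\otimes X_N)$.

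With this, $\JJ_X$ is the kernel of an explicit map. On partition tensors the map is $\pm f\otimes g$ with the shuffle sign, so the kernel restricted to partitions is spanned by differences of tensors with the same $f,g$ and related signs. Any two partitions with the same block sizes are connected by allowed transpositions, by the unravelling argument of Proposition \ref{prop:canonical-partition}. Each allowed transposition $\tau_k$ changes $e_\alpha\wedge e_\beta$ by a sign $-1$. So the second family in b), read with the sign conventions of the exterior algebra, together with the overlapping tensors spans $\ker sh$. We must reconcile the sign in b) with the orientation of the $e_\alpha$: swapping $k,k+1$ across blocks reorders the wedge, so we should check that the stated difference is indeed annihilated. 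This sign bookkeeping is the step I expect to need care.

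For c), we mimic Lemma \ref{lem:gendeltaJgen}. Take an overlapping tensor $(f\otimes e_{\alpha})\otimes(g\otimes e_{\beta})$ in degree $n-1$ with a single overlap at $k$. In its differential, the terms from $d^i$ with $i\neq k$ remain overlapping, using the coface formulas and the simplicial identities. The term from $d^k$ splits $k$ into $k,k+1$; its cross terms give exactly the two primitive partitions $\pm$ the transposition difference, and the remaining terms overlap. So the differential closure of the overlapping tensors contains all the generators in b), and conversely it lies in $\JJ_X$ because $sh$ is a chain map. The main obstacle is keeping signs consistent between the $\Lambda$-orientation, the differential $d^0$ term involving $df$, and the allowed-transposition relations. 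I would first verify the case $n=2$, $|\alpha|=|\beta|=1$ by hand to fix the conventions.
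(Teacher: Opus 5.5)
Your route is the paper's. Part a) comes from the codegeneracy formulas. Part b) comes from evaluating $sh_{X_N}$ on covering tensors. Part c) comes from the $d^k$ cross terms of a tensor with a single overlap at $k$. Part d) comes from computing $aw_X$ and noting that the $dg$-corrections produced by $(d^0)^p$ overlap $\{1,\dots,p\}$.

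The sign check you postpone, however, fails for the statement as written. For an allowed $\tau_k$, neither block contains both $k$ and $k+1$. So $e_{\tau_k(\alpha)}$ is just $e_\alpha$ with $e_k$ and $e_{k+1}$ interchanged, with no reordering inside a block. Hence, as you observe yourself, $e_{\tau_k(\alpha)}\wedge e_{\tau_k(\beta)}=-e_\alpha\wedge e_\beta$, and $sh_{X_N}$ sends the displayed difference to $2(f\otimes g)\otimes e_\alpha\wedge e_\beta$. This is nonzero in characteristic $\neq2$. Your proposed $n=2$ test already shows it: $(f\otimes e_1)\otimes(g\otimes e_2)-(f\otimes e_2)\otimes(g\otimes e_1)\mapsto 2(f\otimes g)\otimes e_1\wedge e_2$.

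So the second family in b) must consist of the sums $(f\otimes e_\alpha)\otimes(g\otimes e_\beta)+(f\otimes e_{\tau_k(\alpha)})\otimes(g\otimes e_{\tau_k(\beta)})$. This is exactly what your connectivity argument produces. On the span of $(p,q)$-partition tensors, $sh_{X_N}$ adds up the copies of $Y^p\otimes Y^q$ weighted by the shuffle signs $\epsilon_S$, and $\epsilon_{\tau_kS}=-\epsilon_S$. It is also what c) produces: the two cross terms of $d^k$ both carry coefficient $(-1)^k$, giving $(-1)^k$ times the sum, in agreement with Lemma \ref{lem:gendeltaJgen}. The minus sign in the statement is a typo, and the paper's proof of c) repeats it. Do not try to reconcile it; correct it to a plus, and your argument is then complete.

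One smaller point in c) concerns the terms from $d^i$ with $i\neq0,k$, and the two diagonal terms of $d^k$. They are not overlapping tensors in the sense of b), because they are not coverings: each misses one of $i,i+1$, or misses $n$ when $i=n$. They cancel because $\delta$ preserves $(X\otimes X)_N$. What remains besides the cross terms are the covering terms from $d^0$, which overlap at $k+1$.
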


\begin{proof}
Claim a) is clear from our formulas for the codegeneracies given in Remark \ref{rmk:dual-denormalization}.
Regarding b), if $\alpha,\beta$ have overlap then $sh_Y((f\otimes e_\alpha)\otimes(g\otimes e_\beta))=(f\otimes g)\otimes(e_\alpha^n\wedge e^n_\beta)$ clearly vanishes, and if $\alpha,\beta$ is a partition, we have $sh_Y(f\otimes e_\alpha)\otimes(g\otimes e_\beta)=(-1)^{(\alpha,\beta)} (f\otimes g)\otimes e^{n}_{1,\dots,n}$, where $(-1)^{(\alpha,\beta)}$ is the sign corresponding to the shuffle $(\alpha,\beta)$. Then, in the case $\alpha,\beta$ define a partition, we see that the corresponding elements of $\JJ_X$ are spanned by allowed transpositions (recall Prop. \ref{prop:canonical-partition}), finishing the proof of b). Next, we prove c). If $(\alpha,\beta)$ is a covering with exactly one overlap at $j$, using Remark \ref{rmk:dual-denormalization}, we can expand
$$\delta ((f\otimes e_\alpha)\otimes(g\otimes e_\beta))=\sum_{i=0}^{n+1}(-1)^i d^i(f\otimes e_\alpha)\otimes d^i(g\otimes e_\beta)
$$
to get $(f\otimes e_\alpha)\otimes(g\otimes e_\beta)-
(f\otimes e_{\tau_k(\alpha)})\otimes(g\otimes e_{\tau_k(\beta)})$ arising from $d^j$, plus an overlapping tensor arising from $d^0$, plus terms that are not coverings, which must cancel out because the final result is normalized. This shows that allowed transposition generators are obtained through $\delta$ from overlapping tensors.
Finally, let us prove d). If $f\in Y^p$ and $g\in Y^q$, our formulas for the cofaces in Remark \ref{rmk:dual-denormalization} imply
$$(d^{p+1})^q(f\otimes e^p_{1,\dots,p})=(f\otimes e^{p+q}_{1,\dots,p})
\qquad
(d^0)^p(g\otimes e^q_{1,\dots,q})=
g\otimes e^{p+q}_{p+1,\dots,p+q} +
\sum_{i=1}^p d(g)\otimes e^{p+q}_{i,p+1,\dots,p+q}$$
It then follows that 
$aw_{Y_K}(f\otimes g)= 
f\otimes e^{p+q}_{1,\dots,p}
\otimes g\otimes e^{p+q}_{p+1,\dots,p+q} + \text{ terms in $\JJ_X$}$.
\end{proof}




Given $(X,\mu)$ a cosimplicial algebra, its normalization becomes a differential graded algebra when endowed with the {\bf cup product} $\cup=N(\mu)\circ aw_X$. 
Analogously, given $(Y,\cup)$ a differential graded algebra, $Y_K$ is a cosimplicial algebra with the {\bf wedge product} $\mu=K(\cup)\circ sh_Y$:
$$\cup=N(\mu)\circ aw_X:X_N\otimes X_N\to X_N \qquad 
f\cup g = (d^{p+1})^q(f)\cdot_\mu (d^0)^p(g) \quad f\in X_N^p,\ g\in X_N^q$$
$$\mu=K(\cup)\circ sh_Y:Y_K\otimes Y_K\to  Y_K \qquad (f\otimes e^n_\alpha)\cdot (g\otimes e^n_\beta)= (f\cup g)\otimes e^n_\alpha\wedge e^n_\beta \quad f\in Y^p,\ g\in Y^q$$
This way, we can upgrade the normalization and denormalization to functors $$N^\otimes: c\cat{Alg} \to dg\cat{Alg},\qquad K^\otimes:dg\cat{Alg}\to c\cat{Alg}$$
These functors no longer define an equivalence, nor do they form an adjoint pair. By an abuse of notation, we  write $N=N^\otimes$ and $K=K^\otimes$.
If $X$ is a cosimplicial algebra and $Y$ a differential graded algebra, the following are elementary facts regarding $X_N$ and $Y_K$:
\begin{enumerate}[a)]
\item If $Y$ is graded commutative then $Y_K$ is degree-wise commutative. On the other hand, even when $X$ is commutative, the normalization $X_N$ endowed with the cup product may fail to be (think of $C(G)$). 
\item There is a natural algebra isomorphism $(Y_K)_N=Y$, this easily follows from \ref{prop:overlapping}-c). Since $Y_K^n\to Y_K^0$ is an infinitesimal extension of degree $n$, not every cosimplicial algebra is of the form $Y_K$ (think again in $C(G)$). 
\item The functor $K$ is fully faithful, so it embeds the category of differential graded algebras within the category of cosimplicial algebras.
We say that a cosimplicial algebra $X$ is {\bf infinitesimal} if $X\cong Y_K$ for some $Y$, in which case $X_N\cong Y$ and $X\cong (X_N)_K$.
\end{enumerate}


The following proposition provides a characterization of infinitesimal cosimplicial algebras, where the products of the overlapping tensors appear as an obstruction.

\begin{proposition}\label{prop:infinitesimal-algebra}
A cosimplicial algebra $(X,\mu)$ is infinitesimal, namely $X\cong (X_N)_K$, if and only if the overlapping tensors lie in the kernel of $\mu$, namely $\mu(\JJ_X)=0$.
\end{proposition}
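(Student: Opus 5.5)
We work throughout with the natural isomorphism $\phi:X\cong (X_N)_K$ of cosimplicial vector spaces from Remark \ref{rmk:dual-denormalization}. Since $\phi$ is natural and $(X_N)_K$ carries the wedge product $K(\cup)\circ sh_{X_N}$, ``$X$ is infinitesimal'' means precisely that $\phi$ is multiplicative. Here we use that $K$ is fully faithful and that $(Y_K)_N\cong Y$ as algebras: if $X\cong Y_K$ as algebras for some $Y$, then $Y\cong X_N$, and the isomorphism can be taken to be $\phi$ itself. So both implications reduce to comparing two products on the same cosimplicial vector space.

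Transport $\mu$ along $\phi$, so that $X=Y_K$ as cosimplicial spaces with $Y=X_N$, and write $\mu':=K(\cup)\circ sh_Y$ for the wedge product. Both $\mu$ and $\mu'$ are cosimplicial maps $Y_K\otimes Y_K\to Y_K$. Applying dual Dold--Kan to the cosimplicial space $Y_K\otimes Y_K=X\otimes X$, a cosimplicial map out of it is determined by its normalization $(X\otimes X)_N\to X_N$. Hence $\mu=\mu'$ if and only if $N(\mu)=N(\mu')$ on $(X\otimes X)_N$.

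By Proposition \ref{prop:overlapping}(d) we have $(X\otimes X)_N=aw_X(X_N\otimes X_N)\oplus \JJ_X$, so it suffices to compare the two normalized maps on each summand.
\begin{itemize}
\item On $aw_X(X_N\otimes X_N)$ both maps restrict to the cup product. Indeed $N(\mu)\circ aw_X=\cup$ by definition. Also $N(\mu')\circ aw_X=N(K(\cup))\circ N(sh_Y)\circ aw_X=\cup$, because $N(sh_Y)$ is the normalized shuffle map $sh_{X_N}$, which is left inverse to $aw_X$, and $NK=\mathrm{id}$.
\item On $\JJ_X=\ker sh_{X_N}$ the map $N(\mu')$ vanishes identically.
\end{itemize}
Therefore $\mu=\mu'$ if and only if $N(\mu)(\JJ_X)=0$, that is, $\mu(\JJ_X)=0$. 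This gives both directions at once.

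It remains to pass from all of $\JJ_X$ to the overlapping tensors, which is what the statement refers to. By Proposition \ref{prop:overlapping}(c), $\JJ_X$ is generated as a differential subspace by the overlapping tensors. Since $N(\mu)$ is a chain map, $\mu(\JJ_X)=0$ holds if and only if $\mu$ kills the overlapping tensors. (The reading ``$\mu(\JJ_X)=0$'' in the statement already includes this.)

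The main obstacle is the bookkeeping in the middle step. One has to check that $N(sh_Y)$ coincides with the map called $sh_{X_N}$, with the appropriate signs. One also has to justify that equality of normalizations forces equality of the cosimplicial maps: this uses that $N$ is faithful on $c\cat{Vect}$, applied to maps out of the degreewise tensor product. I would verify the latter via the splitting $X\otimes X=(X\otimes X)_N\oplus (X\otimes X)_D$, noting that $(X\otimes X)_D$ is generated by images of cofaces $d^i$, $i>0$, on which both maps agree by induction on degree.
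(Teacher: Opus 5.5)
Your proposal is correct and follows essentially the same route as the paper: both compare $\mu$ with the wedge product $K(\cup)\circ sh$ via their normalizations on the splitting $(X\otimes X)_N=aw_X(X_N\otimes X_N)\oplus\JJ_X$, where the two agree on the first summand and the wedge product vanishes on the second. Your extra remarks make explicit several steps the paper leaves implicit: that $N$ is faithful, that being infinitesimal reduces to the canonical $\phi$ being multiplicative, and that killing the overlapping tensors suffices (by Proposition~\ref{prop:overlapping}(c) and the fact that $N(\mu)$ is a chain map).
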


\begin{proof}
The canonical isomorphism of cosimplicial vector spaces $X\cong (X_N)_K$ is an algebra isomorphism if and only if the following diagram commutes
$$\xymatrix{ 
(X\otimes X)_N \ar[d]_{\mu} \ar[r]^{sh_{X_N}} & X_N\otimes X_N \ar[r]^{aw_X} & (X\otimes X)_N \ar[d]^{\mu} \\
X_N \ar@{=}[rr] & & X_N }$$
This is because the composition of the top and the right arrows is the normalization of the product in $(X_N)_K$, while the left arrow is the normalization of the original product in $X$. 
Under the natural decomposition $(X\otimes X)_N= aw_{X}(X_N\otimes X_N)\oplus \JJ_X$ discussed in Proposition \ref{prop:overlapping}, the top composition is the identity on the first component and 0 on the second component. 
Thus, the diagram commutes if and only if $\JJ_X$ is in the kernel of $\mu$, and the result follows.
\end{proof}


We close this subsection by showing a historically important example of infinitesimal cosimplicial algebras that arise in the work of Beilinson. A cosimplicial algebra $X$ is {\bf reduced} if $X^0=k$ is the base field, and is {\bf small} \cite{burgos} if it is associative, commutative, spanned by $X^0$ and $X^1$ under the cup-product, and satisfies $I_{1,1}^2=0$, where $I_{1,1}=\ker (s^0:X^1\to X^0)$. Similarly, a differential graded algebra $Y$ is {\bf reduced} if $Y^0=k$ and is {\bf small} if it is commutative and spanned by $Y^0\oplus Y^1$. It is easy to see that $K$ and $N$ preserve reduced and small algebras.

\begin{proposition}\label{prop:small-alg}
If $X$ is a reduced small cosimplicial algebra, then it is infinitesimal.
\end{proposition}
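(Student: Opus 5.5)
By Proposition \ref{prop:infinitesimal-algebra}, it suffices to show that $\mu(\JJ_X)=0$. By Proposition \ref{prop:overlapping}(c), $\JJ_X$ is spanned as a differential subspace of $(X\otimes X)_N$ by the overlapping tensors. The normalization of $\mu$, namely $N(\mu):(X\otimes X)_N\to X_N$, is a cochain map because $\mu$ is cosimplicial. Hence $\mu(\JJ_X)$ is the differential subspace of $X_N$ spanned by the products of overlapping tensors, and the task reduces to showing that every product $\mu(u\otimes v)$ with $u\otimes v$ an overlapping tensor in $(X\otimes X)^n_N$ vanishes. Concretely, such a product is an element $u\cdot v\in X^n$ where $u,v\in X^n$, every $j\in\{1,\dots,n\}$ lies in $\ker s^{j-1}$ for at least one of $u,v$, and some index $j$ kills both, so that $u,v\in I_{n,j}=\ker s^{j-1}$.

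The plan is to show that $I_{n,j}^2=0$ in $X^n$ for every $n$ and $j$, which gives $u\cdot v\in I_{n,j}^2=0$ at once. The hypothesis supplies this for $n=1$. Smallness says that $X^n$ is generated as an algebra by the images of $X^0$ and $X^1$ under the iterated cofaces. Since $X^0=k$, this means $X^n$ is spanned by $1$ and products of elements $\theta^*(a)$, where $a\in X^1$ and $\theta:[1]\to[n]$ is a map in $\Delta$ (equivalently, cup products of degree-one elements pulled back). Decomposing $a=s^0(a)\cdot 1+(a-s^0(a))$ with $s^0(a)\in k$, we may take $a\in I_{1,1}$. Thus $X^n=k\oplus\mathfrak m_n$, where $\mathfrak m_n$ is the ideal generated by the elements $x_\theta=\theta^*(a)$ with $a\in I_{1,1}$ and $\theta=(i<i')$ injective; a non-injective $\theta$ gives $\theta^*(a)=s^0(a)$-type terms, which are zero.

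Each such generator satisfies $x_\theta^2=\theta^*(a^2)=0$, and it is killed by $s^{j-1}$ exactly when $\sigma_{j-1}\theta$ is constant, i.e.\ when $\theta=(j-1,j)$. More generally, $s^{j-1}x_\theta=(\sigma_{j-1}\theta)^*a$, which is either $0$ or another generator. Using these formulas, $X^n$ becomes a quotient of the exterior-like algebra on the symbols $x_{(i,i')}$. The next step is to show that $I_{n,j}$ is spanned by monomials containing a factor $x_\theta$ with $\theta$ ``crossing'' $j$, meaning $\theta(0)<j\le\theta(1)$. I would then use the cocycle-type relation $x_{(i,i'')}=x_{(i,i')}+x_{(i',i'')}$. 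This relation should follow from $I_{2,\cdot}$ considerations: $d^1a-d^0a-d^2a$ lies in $\ker s^0\cap\ker s^1$ in $X^2$, and in the small reduced setting this intersection is spanned by products $d^2a\cdot d^0b$, so the relation holds modulo quadratic terms. With this relation, every crossing generator reduces, modulo the ideal, to $x_{(j-1,j)}$ plus terms. A product of two elements of $I_{n,j}$ is then a multiple of $x_{(j-1,j)}^2=0$, up to the correction terms.

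I expect the main obstacle to be this last reduction: controlling the correction terms and proving that $I_{n,j}$ is really generated by $x_{(j-1,j)}$, and not merely by crossing generators, whose products need not vanish. If that fails, my fallback is to work directly with $X\cong(X_N)_K$ as vector spaces, via Remark \ref{rmk:dual-denormalization}. Smallness then implies that $X_N$ is generated by $X_N^1$ under cup products. From that, the product of two overlapping tensors can be computed inductively on $n$ by writing each factor as an iterated coface of lower-degree elements and using multiplicativity of the cofaces, reducing the statement to $I_{1,1}^2=0$.
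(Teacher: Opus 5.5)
Your reduction is sound. By Propositions~\ref{prop:infinitesimal-algebra} and \ref{prop:overlapping}(c), and because $N(\mu)$ is a cochain map, it suffices to show that products of overlapping tensors vanish, and $I_{n,j}^2=0$ would give this (it is in fact true here). But you never prove $I_{n,j}^2=0$. The step you call the main obstacle is the entire content of the proposition, and the route you sketch does not reach it. The crossing generators $x_\theta$ with $\theta\neq(j-1,j)$ are not in $I_{n,j}$, and rewriting them as ``$x_{(j-1,j)}$ plus terms'' leaves exactly the uncontrolled terms. The fallback is only a sketch; read literally it fails, since a nonzero top-degree normalized factor is not in the image of any coface. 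Part of the trouble is your opening reading of smallness as ``$X^n$ is generated as an algebra by iterated cofaces of $X^1$''. That is strictly weaker than the paper's definition, and under it the statement is false. Let $X^n$ be the commutative algebra on square-zero generators $x_\theta$, one for each injective $\theta:[1]\to[n]$, with $\phi\in\Delta$ acting by $x_\theta\mapsto x_{\phi\theta}$ (and $x_\theta\mapsto0$ if $\phi\theta$ is constant). It is reduced with $I_{1,1}^2=0$, yet $x_{01}(x_{12}-x_{02})$ is a nonzero element of $I_{2,1}^2\cap X^2_N$. You silently switch to the correct reading when you claim that $X^2_N$ is spanned by products $d^2a\cdot d^0b$.

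The missing idea, which is the paper's argument, is to use smallness in its cup-product form, so that only the consecutive edges $\theta_i=(i-1,i)$ occur. Then $X^n$ is spanned by $f_1\cup\dots\cup f_n=\prod_i\theta_i^*(f_i)$ with $f_i\in X^1$, and by commutativity $(f_1\cup\dots\cup f_n)\cdot(g_1\cup\dots\cup g_n)=(f_1g_1)\cup\dots\cup(f_ng_n)$. If both factors lie in $I_{n,j}$ with $j$-th entries in $I_{1,1}$, the product vanishes because $f_jg_j\in I_{1,1}^2=0$. The paper reads off $f_j\in I_{1,1}$ by treating an element of $I_{n,j}$ as a single cup monomial. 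For sums this needs an argument, and your degree-two relation $d^1a=d^0a+d^2a+\sum_k d^2b_k\cdot d^0c_k$ (with $b_k,c_k\in I_{1,1}$) supplies it. Splitting $f_i=s^0(f_i)+a_i$, $X^n$ is spanned by the monomials $m_\alpha=\prod_{i\in\alpha}\theta_i^*(a_i)$ with $a_i\in I_{1,1}$. Let $V_j$ be the span of those with $j\in\alpha$; then $V_j\subset I_{n,j}$ and $V_j^2=0$. For $j\notin\alpha$, $d^js^{j-1}m_\alpha$ equals $m_\alpha$ except that a factor on the edge $(j,j+1)$ is moved to $(j-1,j+1)$. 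Your relation, pulled back along $(j-1,j,j+1)$, then gives $d^js^{j-1}m_\alpha\equiv m_\alpha$ modulo $V_j$. Hence $X^n=V_j+d^j(X^{n-1})$. Since $s^{j-1}d^j=\id$, any $u=v+d^j(y)\in I_{n,j}$ with $v\in V_j$ satisfies $0=s^{j-1}u=y$. Thus $I_{n,j}=V_j$ and $I_{n,j}^2=0$.
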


\begin{proof}
Given $f\otimes e^n_\alpha\in I_{n,j}=\ker(s^{j-1}:X^n\to X^{n-1})$, by smallness, we have
$f\otimes e^n_\alpha = f_1 \cup \dots \cup f_n$, with $f_i \in X^1$. Writing $s^0(f_j)=\lambda_j \in X^0=k$, we have
 $s^j(f\otimes e^n_\alpha)
= f_1 \cup \dots \cup s^0(f_j) \cup \dots \cup f_n 
= \lambda_j  (f_1 \cup \dots \cup f_{j-1} \cup f_{j+1} \cup \dots \cup f_n)$, so either $f\otimes e^n_\alpha=0$  or $f_j \in I_{1,1}$.
Then, given an overlapping tensor $(f\otimes e^n_\alpha)\otimes(g\otimes e^n_\beta)\in \JJ_X$ with $\alpha,\beta$ a covering with overlap at $j$, we have
$$
\mu(f\otimes e^n_\alpha)\otimes(g\otimes e^n_\beta)= (f_1 g_1) \cup \dots \cup (f_j g_j) \cup \dots \cup (f_n g_n)=0$$
for $f_j,g_j \in I_{1,1}$. It follows that $\mu(\JJ_X)=0$, so $X$ is infinitesimal by Proposition \ref{prop:infinitesimal-algebra}.
\end{proof}


\subsection{Abstract differentiation}

We introduce here the abstract version of the differentiating ideal for a cosimplicial algebra and present Theorem \ref{thm:4}, a monoidal refinement of the Dold–Kan correspondence, making differential graded algebras a reflective subcategory of cosimplicial algebras. We close by framing our result within the work by Pridham \cite{Pri10,Pri20}.

\medskip


The functor $K$ adds canonical nilpotents, representing canonical infinitesimals in geometry. It turns out that this has a left adjoint $N'$, regarding differential graded algebras as the infinitesimal counterpart of cosimplicial algebras. We describe it now. 
Given $X$ a cosimplicial algebra, write $I_{n,j}=\ker(s^{j-1}:X^n\to X^{n-1})$, so $(X_N)^n=\bigcap_{j=1}^n I_{n,j}$, and each 
$I_{n,j}$ is an ideal in $X^n$. Recall that, using $X\simeq (X_N)_K$ in $c\cat{Vect}$, $f\otimes e^n_\alpha\in I_{n,j}$ if and only if $j\in\alpha$. We write $I_{n,j}^2=I_{n,j}I_{n,j}\subset X^n$ for the square ideal. 

\begin{definition}
Given $X$ a cosimplicial algebra, its {\bf differentiating ideal} $\J_X\subset X_N$ is the differential ideal generated by $J_X^n= \sum_{j=1}^n I_{n,j}^2\cap X_N^n$. The {\bf abstract differentiation} of $X$ is, by definition, the differential graded algebra $N'(X)=X_N/\J_X$.
\end{definition}


\begin{example}
If $G$ is a simplicial manifold, then $CE(A_G)$ is the abstract differentiation of $C(G)$, while $W(A_G)$ is the abstract differentiation of $\Omega(G)$. 
\end{example}



The following shows, together with Prop. \ref{prop:infinitesimal-algebra}, that $J_X$ measures the failure of $X$ to be infinitesimal.

\begin{lemma}\label{lemma:JJJ}
Given $X$ a cosimplicial algebra, we have $\mu(\JJ_X)\subset \J_X$, and moreover, $\J_X$ is the smaller differential ideal spanned by the multiplication of the overlapping tensors $\mu(\JJ_X)$. In particular, we obtain that $X$ is infinitesimal iff $\mu(\JJ_X)=0$ iff $\J_X=0$.
\end{lemma}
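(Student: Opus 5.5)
We work throughout in the linear model given by the vector space isomorphism $X\cong (X_N)_K$ of Remark \ref{rmk:dual-denormalization}, so that elements of $X^n$ are sums of $f\otimes e^n_\alpha$. The ideal $I_{n,j}$ is spanned by those terms with $j\in\alpha$, and $(X\otimes X)_N$ is described by Proposition \ref{prop:overlapping}. The product $\mu$ of $X$ is arbitrary: we do not assume $\mu$ is the wedge product, so all information about $\mu$ must be read off through the ideals $I_{n,j}$. The proof has three steps.

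The first step shows $\mu(\JJ_X)\subset \J_X$. By Proposition \ref{prop:overlapping}(c), $\JJ_X$ is the differential subspace generated by overlapping tensors $(f\otimes e_\alpha)\otimes(g\otimes e_\beta)$, where $(\alpha,\beta)$ is a covering with overlap at some $j$. Since $\mu$ is a cosimplicial map, its normalization $N(\mu):(X\otimes X)_N\to X_N$ is a cochain map and therefore commutes with $\delta$. It is then enough to check generators.
\begin{itemize}
\item For such a generator, $f\otimes e_\alpha\in I_{n,j}$ and $g\otimes e_\beta\in I_{n,j}$, so $\mu$ of it lies in $I_{n,j}^2$.
\item Since the tensor is normalized and $N(\mu)$ preserves normalization, its image also lies in $X^n_N$, hence in $J^n_X$.
\end{itemize}
Applying $\delta$ then gives $\mu(\JJ_X)\subset J_X+\delta(J_X)=\J_X$.

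The second step is the converse, and it is the main obstacle: $J_X$ must lie in the differential ideal $\langle\mu(\JJ_X)\rangle$ generated by $\mu(\JJ_X)$. Take $h\in I_{n,j}^2\cap X^n_N$, say $h=\sum_r a_r b_r$ with $a_r,b_r\in I_{n,j}$, where the individual factors need not be normalized. The plan is as follows.
\begin{itemize}
\item Expand $a_r,b_r$ into components $a_\alpha\otimes e_\alpha$ and $b_\beta\otimes e_\beta$ with $j\in\alpha,\beta$.
\item Apply the normalization projection $X\to X_N$, which is the idempotent $f\mapsto \sum_\beta(-1)^\beta s^\beta(f)$ followed by taking the top component, to the tensor $\sum_r a_r\otimes b_r\in X\otimes X$. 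Because $h$ is normalized, $\mu$ of the projected tensor equals the projection of $h$, which is $h$ itself. This should follow from compatibility of $\mu$ with the codegeneracies, since $\mu$ is degreewise.
\item The projected tensor lies in $(X\otimes X)_N$ and is built from pairs $(\alpha,\beta)$ both containing $j$, so only coverings with overlap at $j$ survive. Hence it lies in the span of overlapping tensors, and $h\in\mu(\JJ_X)$.
\end{itemize}
The delicate point is checking that the normalization of $X\otimes X$, written in the $(Y_K\otimes Y_K)$ basis, does not destroy the condition $j\in\alpha\cap\beta$. The codegeneracies $s^{k-1}$ of $X\otimes X$ act diagonally, and they kill a term exactly when $k\in\alpha\cup\beta$, so the projection can only delete terms. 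I expect this to work; if it fails, I would instead use Proposition \ref{prop:generators}-style expansions.

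Once $J_X\subset\langle\mu(\JJ_X)\rangle$ is established, the differential ideal $\J_X$ generated by $J_X$ is contained in $\langle\mu(\JJ_X)\rangle$. Combined with the first step, $\J_X$ is the smallest differential ideal containing $\mu(\JJ_X)$. The final equivalences then follow at once:
\begin{itemize}
\item $\J_X=0$ if and only if $\mu(\JJ_X)=0$, using both inclusions;
\item $\mu(\JJ_X)=0$ if and only if $X$ is infinitesimal, by Proposition \ref{prop:infinitesimal-algebra}.
\end{itemize}
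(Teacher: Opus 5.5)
Your first step is correct and is the paper's argument. The gap is in the second step, at exactly the point you flagged as delicate. Let $p\colon (X\otimes X)^n\to (X\otimes X)^n_N$ be the normalization projection along $(X\otimes X)_D=\sum_{i>0}d^i((X\otimes X)^{n-1})$. Your claim that $p$ "can only delete terms" in the $(Y_K\otimes Y_K)$-basis is false. The complement $(X\otimes X)_D$ is not the span of the non-covering components, because positive cofaces mix components: for $i>0$, $i\in\alpha$, one has $d^i(f\otimes e_\alpha)=f\otimes(e_{\delta_i(\alpha)}+e_{\delta_{i+1}(\alpha)})$, and under the diagonal action on $X\otimes X$ the cross terms can be coverings. (Relatedly, $p$ cannot be built from codegeneracies alone; already in degree $1$ it is $1-d^1s^0$.)

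For instance, take $n=2$, $j=2$, $f,g\in Y^1$, and write $e_t$ for the basis vectors in both degrees. Then
\[
(d^1-d^2)\bigl((f\otimes e_1)\otimes(g\otimes e_1)\bigr)=(f\otimes e_1)\otimes(g\otimes e_2)+(f\otimes e_2)\otimes(g\otimes e_1)+(f\otimes e_2)\otimes(g\otimes e_2)
\]
lies in $(X\otimes X)_D$, and the first two terms on the right are coverings. Hence $p((f\otimes e_2)\otimes(g\otimes e_2))=-(f\otimes e_1)\otimes(g\otimes e_2)-(f\otimes e_2)\otimes(g\otimes e_1)$, even though both factors lie in $I_{2,2}$. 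This is a combination of partitions with no overlap, so the inference "only coverings with overlap at $j$ survive, hence the projected tensor is in the span of overlapping tensors" breaks down.

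The step is repairable, because with $T=\sum_r a_r\otimes b_r$ all you need is $p(T)\in\JJ_X=\ker sh_{X_N}$. The example above does lie in $\JJ_X$: it is a transposition-type element, just not an overlapping tensor. Consider the map $(f\otimes e_\alpha)\otimes(g\otimes e_\beta)\mapsto(f\otimes g)\otimes e_\alpha\wedge e_\beta$.
\begin{itemize}
\item It is a cosimplicial map $Y_K\otimes Y_K\to(Y\otimes Y)_K$; this is what makes $Y_K$ a cosimplicial algebra.
\item Its normalization is $sh_{X_N}$.
\item It kills $T$, because $e_\alpha\wedge e_\beta=0$ whenever $j\in\alpha\cap\beta$.
\end{itemize}
Normalization projections commute with cosimplicial maps. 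This is also the correct justification for your identity $\mu(p(T))=p(h)=h$, and it uses compatibility with the cofaces, not only the codegeneracies. You therefore get $sh_{X_N}(p(T))=0$, hence $h=\mu(p(T))\in\mu(\JJ_X)$.

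Repaired this way, your route genuinely differs from the paper's and is slightly sharper, since it gives $J_X\subset\mu(\JJ_X)$ directly. The paper instead reduces to the case $\mu(\JJ_X)=0$. It maps $X$ to $(X_N/\langle\mu(\JJ_X)\rangle)_K$, which is an algebra map by the argument later used in Theorem \ref{thm:4}, and then observes that $I_{n,j}^2=0$ in any $Y_K$. Both arguments ultimately rest on the same vanishing $e_\alpha\wedge e_\beta=0$ for $j\in\alpha\cap\beta$. Yours applies it before multiplying, through the shuffle map; the paper's applies it after passing to the infinitesimal quotient.
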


\begin{proof}
If $(f\otimes e_\alpha^n)\otimes(g\otimes e_\beta^n)\in \JJ_X$ is an overlapping tensor, with $\alpha,\beta$ a covering and $j\in \alpha\cap\beta$, then $\mu((f\otimes e_\alpha^n)\otimes(g\otimes e_\beta^n))$ is clearly in $\J_X$, and by Proposition \ref{prop:overlapping}, $\mu(\JJ_X)\subset \J_X$.

To prove that $\JJ_X$ span $\J_X$, let us show that if $\JJ_X=0$ then $\J_X=0$, the general case then follows by replacing $X$ with the quotient $(X_N/\langle \JJ_X\rangle)_K$. So, assuming $\JJ_X=0$, we have that $X=(X_N)_K$ is an infinitesimal algebra by Proposition \ref{prop:infinitesimal-algebra}, and since $I_{n,j}$ is spanned by tensors $f\otimes e^n_\alpha$ with $j\in\alpha$, we have $I_{n,j}^2=0$, for 
$(f\otimes e^n_\alpha)(g\otimes e^n_\beta)=(f\cup g)\otimes e^n_\alpha\wedge e^n_\beta$, and
 $\J_X=0$.
\end{proof}


\begin{remark}
Unlike the normalization functor $N$, the abstract differentiation $N'$ sends 
degree-wise commutative algebras to graded commutative algebras. 
In fact, if $X\in c\cat{Alg}$ is commutative,
given $f\otimes e_{1,\dots,p}\in (X_N)_K^p$ and $g\otimes e_{1,\dots,q}\in (X_N)_K^q$, 
we have 
\begin{align*}
(f\otimes e_{1,\dots,p})\cup(g\otimes e_{1,\dots,q}) & = 
(f\otimes e_{1,\dots,p})(g\otimes e_{p+1,\dots,p+q}) + \text{overlapping tensors} \\
&\equiv_{\J_X} 
(-1)^{pq}(f\otimes e_{q+1,\dots,p+q})(g\otimes e_{1,\dots,q})\\
&\equiv_{\J_X}
(-1)^{pq}(g\otimes e_{1,\dots,q})(f\otimes e_{q+1,\dots,p+q})\\
&\equiv_{\J_X}
(-1)^{pq}(g\otimes e_{1,\dots,q})\cup (f\otimes e_{1,\dots,p})
\end{align*}
where we are using that $\mu(\JJ_X)\subset \J_X$, the description of the generators of $\JJ_X$ from \ref{prop:overlapping}, and that we can go from $(\{1,\dots,p\},\{p+1,\dots,p+q\})$ to 
$(\{q+1,\dots,p+q\},\{1,\dots,q\})$ through $pq$ allowed transpositions.
\end{remark}


We are now ready to present the refined monoidal version of Dold-Kan, which elucidates the algebraic mechanism behind the geometric differentiation $G\mapsto A_G$ of the previous sections. 

\begin{theorem}[Abstract differentiation]\label{thm:4}
The abstract differentiation $N'(X)=X_N/\J_X$
is left adjoint to the denormalization functor, making differential graded algebras a reflective subcategory of cosimplicial algebras.
$$N':c\cat{Alg}\leftrightarrows dg\cat{Alg}:K, \qquad N'\dashv K.$$
Moreover, $N'$ preserves commutativity, and for a simplicial manifold $G$ it recovers the geometric differentiation as $N'(C(G))\cong CE(A_G)$.
\end{theorem}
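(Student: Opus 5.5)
To prove $N'\dashv K$, I will construct a natural bijection
$$\hom_{c\cat{Alg}}(X,Y_K)\cong\hom_{dg\cat{Alg}}(X_N/\J_X,Y).$$

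Given a cosimplicial algebra map $F:X\to Y_K$, I apply normalization. Since $N$ sends algebra maps to dga maps for the cup products, this gives a dga map $N(F):X_N\to (Y_K)_N=Y$; the identification $(Y_K)_N=Y$ is the algebra isomorphism of fact b). Next I check that $N(F)$ kills $\J_X$. Because $F$ is cosimplicial, it commutes with the codegeneracies, so $F(I_{n,j})\subset I_{n,j}(Y_K)$. Being multiplicative, it then sends $I_{n,j}^2$ into $I_{n,j}(Y_K)^2$. By the argument in the proof of Lemma \ref{lemma:JJJ}, the latter vanishes: $Y_K$ is infinitesimal, and $(f\otimes e_\alpha)(g\otimes e_\beta)=(f\cup g)\otimes e_\alpha\wedge e_\beta=0$ whenever $j\in\alpha\cap\beta$. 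Hence $N(F)(J_X)=0$. Since $N(F)$ is a chain map, it also kills $\delta(J_X)$, and so it kills $\J_X$. This gives the map $\bar F:N'(X)\to Y$.

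Conversely, a dga map $h:X_N/\J_X\to Y$ yields the composite $X\cong (X_N)_K\to (X_N/\J_X)_K\xto{h_K}Y_K$. The first arrow is only an isomorphism of cosimplicial vector spaces, so the main obstacle is showing that this composite is multiplicative. I will set $Z=X_N/\J_X$ and let $q:X_N\to Z$ be the quotient. Arguing as in Proposition \ref{prop:infinitesimal-algebra}, multiplicativity of $X\to Z_K$ reduces to commutativity of
$$q\circ\mu = \cup_Z\circ(q\otimes q)\circ sh_{X_N}$$
on $(X\otimes X)_N$. I decompose $(X\otimes X)_N=aw_X(X_N\otimes X_N)\oplus\JJ_X$ using Proposition \ref{prop:overlapping} d).
\begin{itemize}
\item On the first summand, $\mu\circ aw_X=\cup$, $sh\circ aw=\id$, and $q$ is multiplicative for $\cup$, so both sides agree.
\item On $\JJ_X$, the right side vanishes, and the left side vanishes because $\mu(\JJ_X)\subset\J_X$ by Lemma \ref{lemma:JJJ}.
\end{itemize}
I also need the maps to intertwine the cosimplicial structures. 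This follows from naturality of the Dold--Kan isomorphism $X\cong(X_N)_K$ together with functoriality of $K$ on chain maps.

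The two constructions are mutually inverse because Dold--Kan is an equivalence at the linear level: a cosimplicial linear map $X\to Y_K$ is determined by its normalization $X_N\to Y$, and conversely. Both constructions are natural in $X$ and $Y$. The counit $N'(Y_K)=Y$ is an isomorphism, since $\J_{Y_K}=0$ by Lemma \ref{lemma:JJJ} and fact b). This is exactly the statement that $K$ is fully faithful, hence reflective.

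Preservation of commutativity is the computation in the preceding Remark. For a simplicial manifold, $C(G)$ is a cosimplicial algebra with $I_{n,j}=I(S_{n,j})$. The abstract ideal $J^n_{C(G)}=\sum_j I_{n,j}^2\cap C_N^\infty(G_n)$ should coincide with the geometric $J_n$ of Definition \ref{def:diffideal}, and then $\J$ agrees too. The one subtle point is that $I(S_{n,j})^2$ must mean finite sums of products. Proposition \ref{prop:generators} a) provides exactly such finite expansions, since functions vanishing quadratically are finite sums $\sum f_{\alpha,\ell}x_{\alpha,\ell}$ with $f_{\alpha,\ell},x_{\alpha,\ell}\in I(S_{n,j})$. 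Therefore $N'(C(G))=C_N(G)/\J=CE(A_G)$ by Theorem \ref{thm:2}.
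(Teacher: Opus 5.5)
Your proposal is correct and takes essentially the same route as the paper. The forward map is normalization, which kills $\J_X$ because $I_{n,j}(Y_K)^2=0$; the inverse is $K$ applied to $X_N\to X_N/\J_X\to Y$, and its multiplicativity is checked separately on $aw_X(X_N\otimes X_N)$ and on $\JJ_X$ using $\mu(\JJ_X)\subset\J_X$. You also spell out points the paper leaves implicit: that the two constructions are mutually inverse, that the counit is an isomorphism, and that the algebraic square $I_{n,j}^2$ matches quadratic vanishing for $C(G)$. On that last point, Proposition \ref{prop:generators} a) only gives this in the frame model near $G_0$; the global statement is the standard Hadamard-lemma fact for closed embedded submanifolds.
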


\begin{proof}
Given $X$ a cosimplicial algebra and $Y$ a differential graded algebra, we have to show that there is a natural isomorphism
$$Hom(X,Y_K)\cong Hom(X_N/\J_X,Y)$$
Given $\phi:X\to Y_K$ in $c\cat{Alg}$, $\phi(\J_X)\subset \J_{Y_K}=0$, and therefore, we have a differential graded algebra map $N'(\phi):X_N/\J_X\to Y$. 

Conversely, given $\psi:X_N/\J_X\to Y$ in $dg\cat{Alg}$, write $\psi':X_N\to Y$ for the composition with $X_N \to X_N/\J_X$, and consider $K(\psi'):(X_N)_K\to Y_K$ the corresponding map in $c\cat{Vect}$. We claim that this is an algebra morphism, or in other words, that the squares below commute:
$$\xymatrix@C=50pt{
aw_{X}(X_N\otimes X_N)
\ar[d]_{\mu} \ar[r]^{N(K(\psi')\otimes K(\psi'))}   &  aw_{Y_K}(Y\otimes Y) \ar[d]^{K(\cup)\circ sh_Y} \\
X_N \ar[r]^{N(K(\psi'))=\psi'} & Y }\qquad
\xymatrix@C=50pt{
\JJ_X
\ar[d]_{\mu} \ar[r]^{N(K(\psi')\otimes K(\psi'))}   & \JJ_Y \ar[d]^{K(\cup)\circ sh_Y} \\
X_N \ar[r]^{N(K(\psi'))=\psi'} & Y }$$
The one on the left commutes because $\psi$ is an algebra map, and the one on the right commutes because the right map is 0, and because $\mu(\JJ_X)\subset \J_X$.
\end{proof}

Our Theorem \ref{thm:4}  recovers as a special case the main result regarding Beilinson's small algebras \cite[Thm 7.3]{burgos} since every reduced small cosimplicial algebra is infinitesimal by Proposition \ref{prop:small-alg}.

\begin{corollary}
The categories of reduced small cosimplicial algebras and reduced small differential graded algebras are equivalent.
\end{corollary}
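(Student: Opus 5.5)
The corollary follows by restricting the adjunction $N'\dashv K$ of Theorem \ref{thm:4} to the full subcategories of reduced small algebras, and checking that unit and counit are isomorphisms there. I will carry this out in three steps.

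First, both functors must preserve the subcategories. The text already notes that $K$ and $N$ preserve reduced and small algebras. By Proposition \ref{prop:small-alg}, every reduced small cosimplicial algebra $X$ is infinitesimal. Lemma \ref{lemma:JJJ} then gives $\J_X=0$, so $N'(X)=X_N$. Hence $N'$ agrees with $N$ on reduced small cosimplicial algebras and lands in reduced small dg algebras.

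Second, the counit. For any dga $Y$ we have $\J_{Y_K}=0$, since $Y_K$ is infinitesimal, and $(Y_K)_N=Y$ by fact b). So $N'(Y_K)\cong Y$ naturally, and this isomorphism is the counit of $N'\dashv K$. Equivalently, $K$ is fully faithful, as stated in fact c).

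Third, the unit. For $X$ reduced small, the unit $X\to (N'X)_K=(X_N)_K$ is the canonical cosimplicial isomorphism $X\cong (X_N)_K$. By Proposition \ref{prop:infinitesimal-algebra} this map is an algebra isomorphism, because $\mu(\JJ_X)=0$. Restricting the adjunction therefore gives an adjoint equivalence between reduced small cosimplicial algebras and reduced small dg algebras.

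The only point needing care is that the unit of the abstract adjunction really is the canonical map $\phi:X\cong (X_N)_K$ of Remark \ref{rmk:dual-denormalization}, composed with the quotient by $\J_X$. This holds by construction of the bijection in the proof of Theorem \ref{thm:4}, where a map $\psi$ is sent to $K(\psi')$ precomposed with that identification.

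A secondary check is that $N$ preserves smallness for the cup product structure. Graded commutativity of $X_N$ follows from the remark that $N'$ preserves commutativity, since $N'=N$ here. Generation by $X_N^0\oplus X_N^1$ follows from $X$ being spanned by $X^0,X^1$ under cup products. I expect no genuine obstacle beyond these verifications.
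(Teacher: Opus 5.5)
Your proposal is correct and follows the paper's argument: restrict the adjunction $N'\dashv K$ of Theorem \ref{thm:4} to reduced small algebras, using that $K$ and $N$ preserve these subcategories. Since every reduced small cosimplicial algebra is infinitesimal by Proposition \ref{prop:small-alg}, we get $N'=N$ there, and both unit and counit are isomorphisms; you have spelled out the unit/counit verification that the paper leaves implicit.
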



\begin{remark}
The Alexander-Whitney and shuffle maps admit super-versions, and Theorem \ref{thm:4} extends to an adjunction between coimplicial and differential graded superalgebras:
$$N': c\cat{Alg}_{\Z_2}\leftrightarrows dg\cat{Alg}_{\Z_2} :K, \qquad N'\dashv K.$$
Notice that, when $X$ is a commutative cosimplicial superalgebra, the cup product incorporates appropriate extra signs with respect to the ordinary one, as in the case of forms $X=\Omega(G)$. 
\end{remark}

Versions of the Dold-Kan adjunction between cosimplicial algebras and differential graded algebras towards differentiation already appeared in the work of Pridham  \cite{Pri10,Pri20}. We comment on its relation to our approach in the following.

\begin{remark}
In \cite[Def.~4.20]{Pri10}, more elaborate formulas for a denormalization are given in a restricted setting while, in the later preprint \cite{Pri20}, 
the construction is extended and formulas for a  left adjoint $D^*$ to denormalization are also given, later proposing them for the non-commutative case as well.
Although these formulas have a different presentation, they turn out to also generate our differentiating ideal $\J_X$. 
Yet, altogether, our approach provides both geometric and algebraic clarity to the construction. 
Pridham further proposes to differentiate simplicial supermanifolds $G$ by applying $D^*$ to the structure sheaf pulled back to $G_0$, see \cite[Sec. 4.3.1]{Pri20}.
Our approach may be viewed as a concrete and geometric refinement of this idea, where the above relations arise naturally from a universal algebraic obstruction, namely the higher vanishing ideal $J_X\subset X_N$, whose geometric origin becomes transparent through the normal form theory developed earlier in the paper. 
Within our framework, differentiation is described globally, the localization of cochains at $G_0$ is automatic modulo $J_X$ (Lemma \ref{lem:Jgenlocal}), and it does not involve a pullback to $G_0$.  
\end{remark}


\subsection{Relation with the work of Severa}\label{subsec:severa}

We finally relate our abstract differentiation to the supergeometric approach proposed by Severa \cite{Severa}. His construction formulates differentiation as a representability problem for a functor defined on a distinguished class of simplicial supermanifolds. 
As mentioned in the introduction, this problem has been previously studied in \cite{Li,LRWZ,dorsch}, where the existence of a representing graded manifold is shown. In this subsection, we show how our geometric differentiation, together with the underlying algebraic properties, provides a \emph{differential} graded manifold representing Severa's functor at once.

\medskip

Recall that a {\bf supermanifold} $M=(M_0,\O_M)$ of dimension $p|q$ is a manifold $M_0$ coupled with a sheaf of $\Z_2$-graded algebras $\O_M$ that is locally modeled by 
$\R^{p|q}=(\R^p,C^\infty(\R^p)[\epsilon_1,\dots,\epsilon_q])$.
We write $\cat{Man}_{\Z_2}$ for the category of supermanifolds, $s\cat{Man}_{\Z_2}$ for the category of simplicial supermanifolds, and $dg\cat{Man}_{\Z_2}$ for the category of differential graded supermanifolds $(M_0,\O_M)$, where $\O_M$ is now a sheaf of $\Z_2$-graded differential graded algebras that is locally modeled, as a $\Z\times\Z_2$-graded algebra, by 
$(\R^p,S_{C^\infty(\R^p)}[\xi_1,\dots,\xi_q])$, a graded symmetric algebra with generators $\xi_i$ of degree $n_i$.
A differential graded supermanifold whose parity is forced by the grading is the same as a higher Lie algebroid.


\begin{remark}
The contravariant functor $M\mapsto C^\infty(M)=\Gamma(M_0,\O_M)$ is fully faithful, hence an embedding of the category of supermanifolds into that of superalgebras. Analogously, a simplicial supermanifold gives rise to a cosimplicial superalgebra, and a dg-supermanifold gives rise to a dg-superalgebra, and these are fully faithful functors:
$$s\cat{Man}_{\Z_2}\xto{C^\infty} c\cat{Alg}_{\Z_2}$$
$$dg\cat{Man}_{\Z_2}\xto{C^\infty} dg\cat{Alg}_{\Z_2}$$
\end{remark}



The {\bf odd line} $\R^{0|1}$ plays a key role in the overall discussion together with its endomorphisms $End(\R^{0|1})=\Pi T(\R^{0|1})$. Its algebra of global sections is $C^\infty(End(\R^{0|1}))=\Omega(\R^{0|1})=\R[\epsilon,x]$, where $\epsilon$ is odd and $x$ even, $\epsilon$ of $\Z$-degree 0, and $x=d\epsilon$ for de Rham differential. Let $P(\R^{0|1})$ denote the nerve of the pair groupoid of the odd line. Given $M$ a supermanifold, we write $M\times P(\R^{0|1})$ for the product with the constant simplicial supermanifold $M$. 

\begin{proposition}\label{prop:severa-inf}
The cosimplicial algebra of cochains $C(M\times P(\R^{0|1}))$ is infinitesimal and its normalization is 
$C(M\times P(\R^{0|1}))_N\cong C^\infty(M\times End(\R^{0|1}))\cong C^\infty(M)[\epsilon,x]$. 
\end{proposition}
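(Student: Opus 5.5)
The plan is to make the cosimplicial algebra completely explicit in adapted odd coordinates, in the spirit of the normal coordinates of Section 3. Write $A=C^\infty(M)$. Since $P(\R^{0|1})_n=(\R^{0|1})^{n+1}$, we have
$$X^n:=C^\infty(M\times P(\R^{0|1})_n)=A[\epsilon_0,\dots,\epsilon_n],$$
with $\epsilon_i$ odd. The coface $d^i$ is induced by forgetting the $i$-th point, so it reindexes the $\epsilon$'s skipping $i$. The codegeneracy $s^{j}$ is induced by the diagonal repeating the $j$-th point, so it identifies $\epsilon_j$ with $\epsilon_{j+1}$.

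I would first change variables to $\epsilon_0$ and the differences $\theta_i=\epsilon_i-\epsilon_{i-1}$, $1\le i\le n$. These are odd, and $X^n=A[\epsilon_0]\otimes\Lambda(\theta_1,\dots,\theta_n)$ as superalgebras. In these coordinates $s^{j-1}$ simply sets $\theta_j=0$ and reindexes the remaining variables. This is the exact analogue of the Dold--Kan formulas in Remark \ref{rmk:dual-denormalization}, with the $\theta$-monomials $\theta_\alpha$ playing the role of $e^n_\alpha$.

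Two consequences follow immediately.

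1. The ideal $I_{n,j}=\ker s^{j-1}$ is the ideal generated by $\theta_j$. Since $\theta_j$ is odd, $\theta_j^2=0$, and by supercommutativity $I_{n,j}^2=0$. Hence $J^n_X=0$ for all $n$, so $\J_X=0$, and $X$ is infinitesimal by Lemma \ref{lemma:JJJ}. Alternatively one can check $\mu(\JJ_X)=0$ directly: an overlapping tensor multiplies two factors both divisible by the same $\theta_j$.
2. The normalization is $X_N^n=\bigcap_j I_{n,j}=A[\epsilon_0]\,\theta_1\cdots\theta_n$, which is free of rank $2$ over $A$ with basis $\theta_1\cdots\theta_n$ and $\epsilon_0\theta_1\cdots\theta_n$.

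Next I would compute the dga structure on $X_N$. For the cup product of $f(\epsilon_0)\theta_1\cdots\theta_p$ with $g(\epsilon_0)\theta_1\cdots\theta_q$:

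1. The front face $(d^{p+1})^q$ acts as the identity on $\epsilon_0,\theta_1,\dots,\theta_p$.
2. The back face $(d^0)^p$ sends $\theta_i\mapsto\theta_{i+p}$ and $\epsilon_0\mapsto\epsilon_p=\epsilon_0+\theta_1+\dots+\theta_p$.
3. Every correction term in $\epsilon_p-\epsilon_0$ contains some $\theta_i$ with $i\le p$, so it is killed by the factor $\theta_1\cdots\theta_p$ coming from the front face.

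So the product is, up to the super Koszul sign, $f g\,\theta_1\cdots\theta_{p+q}$. For the differential, $\delta(\epsilon_0)=d^0\epsilon_0-d^1\epsilon_0=\epsilon_1-\epsilon_0=\theta_1$; more generally the differential is the derivation determined by $\epsilon_0\mapsto\theta_1$ and $\theta_1\mapsto 0$.

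Setting $x=\theta_1\in X_N^1$, one therefore gets $X_N\cong A[\epsilon,x]$ with $\epsilon=\epsilon_0$, $x^p=\pm\theta_1\cdots\theta_p$ and $\delta\epsilon=x$. This is exactly $C^\infty(M)\otimes\Omega(\R^{0|1})=C^\infty(M\times End(\R^{0|1}))$ with de Rham differential, since $x=d\epsilon$.

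The main obstacle I expect is the bookkeeping of signs under the super convention for the cup product (cf. the remark on superalgebras after Theorem \ref{thm:4}). Concretely, I must check that $x$ of bidegree $(1,\text{odd})$ is even for the total sign rule, so that $x^p\neq0$ and $A[\epsilon,x]$ is the correct polynomial-times-exterior algebra rather than a truncated one. I would handle this by fixing the sign rule $(-1)^{pq+|a||b|}$ and verifying $x\cup x=\theta_1\theta_2$ with a positive sign. The general identity $x^{\cup p}=\theta_1\cdots\theta_p$ then follows by induction using the cup product formula above.
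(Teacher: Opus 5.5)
Your proposal is correct and follows essentially the same route as the paper. Both arguments use explicit odd coordinates and observe that $I_{n,j}^2=0$ because $(\epsilon_j-\epsilon_{j-1})^2=0$. Both then note that the normalized cochains are spanned in each degree by $\Delta_n=\theta_1\cdots\theta_n$ and $\pi_n=\epsilon_0\Delta_n$, and verify the isomorphism $\Delta_n\mapsto x^n$, $\pi_n\mapsto \epsilon x^n$ using the cup product and $\delta\epsilon_0=\theta_1$. Your difference coordinates $\theta_i$ make the paper's relations (such as $\epsilon_i\Delta_n=\pi_n$ and $\delta\pi_{n-1}=\Delta_n$) more transparent, and your check of the super sign convention addresses a point the paper leaves implicit.
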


\begin{proof}
Let us prove the case $M=\ast$, for the general case will follow by the canonical isomorphism 
$C^\infty(M\times End(\R^{0|1}))\cong C^\infty(M)\otimes C^\infty(End(\R^{0|1}))$. 
%
%
We first verify that $I_{n,j}^2\cap X_N = 0$ so that the differentiating ideal $\hat J_X =0$ is trivial for $X=C^\infty(P(\R^{0|1}))$, namely, that $X$ is infinitesimal.

In dimension $n$ we have $P(\R^{0|1})_n=(\R^{0|1})^{n+1}=\R^{0|n+1}$, and therefore, $C^\infty(P(\R^{0|1})_n)=\R[\epsilon_0,\dots,\epsilon_n]$.  
The cofaces and codegeneracies of $C^\infty(P(\R^{0|1}))$ coming from the pair groupoid are given by $$d^i:\R[\epsilon_0,\dots,\epsilon_{n-1}]\to
\R[\epsilon_0,\dots,\epsilon_{n}] \qquad d^i(\epsilon_k)=\begin{cases}
\epsilon_k & k<i\\ \epsilon_{k+1} & k\geq i
\end{cases}$$
$$s^j:\R[\epsilon_0,\dots,\epsilon_{n+1}]\to
\R[\epsilon_0,\dots,\epsilon_{n}] \qquad s^j(\epsilon_k)=\begin{cases}
\epsilon_k & k<j\\ \epsilon_{k-1} & k\geq j
\end{cases}$$
As in ordinary geometry, a cochain has higher vanishing, namely $f\in I^2_{n,j}$, if it is a multiple of $(\epsilon_j-\epsilon_{j-1})^2$. But $(\epsilon_j-\epsilon_{j-1})^2=0$ in $\R[\epsilon_0,\dots,\epsilon_n]$, so that $\J_X=0$ as claimed.

Finally, let us verify that $C(P(\R^{0|1}))_N\cong \R[\epsilon,x]$. 
From the above, it follows that a cochain $f\in C^\infty(P(\R^{0|1})_n)$ is normalized if it is a multiple of $\Delta_n:=(\epsilon_1-\epsilon_0)\cdots(\epsilon_n-\epsilon_{n-1})$ in $ \R[\epsilon_0,\dots,\epsilon_n]$. Taking into account nilpotency of the coordinates, a general normalized cochain must take the form 
\[ f = a \pi_n + b \Delta_n, \ a,b\in \R \text{ where $\pi_n = \epsilon_0\cdots \epsilon_n$.}\]
Next, computing directly with the definitions and $d=\sum_i (-1)^i d^i$, we obtain 
$$\epsilon_i\Delta_n  = \pi_n, \ d(\pi_{n-1})=\Delta_n.$$
We thus define the isomorphism $C^\infty_N(P(\R^{0|1}))\cong\R[\epsilon][x]$ by $\Delta_n \mapsto x^n$ and $\pi_n\mapsto \epsilon x^n$. The fact that it preserves products follows directly from the relations $\pi_p\cup \pi_q=0$, $\Delta_p\cup \pi_q = \pi_{p+q}$ and $\Delta_p \cup \Delta_{q}=\Delta_{p+q}$.
\end{proof}

\begin{remark}
Recall from classical Lie theory that, for any manifold $M$, cochains on the nerve of its pair groupoid $P(M)$ differentiate to forms on $M$, $CE(A_{P(M)})\simeq \Omega(M)$.
Then, the proof of $C^\infty_N(P(\R^{0|1}))\cong\R[\epsilon,x]$ above can be seen as a particular super-case of differentiation:  since $\J_{P(\R^{0|1})}=0$, we have $C^\infty_N(P(\R^{0|1}))=CE(A_{P(\R^{0|1})})\simeq \Omega(\R^{0|1})= \R[\epsilon,x]$.
\end{remark}


Severa proposes in \cite{Severa} to consider the differentiation of a simplicial supermanifold $G$ as follows. Let $\Csev$ be the full subcategory of simplicial supermanifolds consisting of the $M\times P(\R^{0|1})$. 
Then, {\bf Severa's differentiation} is defined to be the presheaf 
$$F_G:(\Csev)^\circ\to \cat{Sets} \qquad
F_G(M\times P(\R^{0|1}))=Hom_{s\cat{Man}_{\Z_2}}(M\times  P(\R^{0|1}),G)$$
The problem of differentiating then translates into finding a differential graded supermanifold (equiv. the underlying higher Lie super-algebroid) suitably representing this functor. In the sequel, we show that our $A_G$ appropriately represents $F_G$.

\begin{lemma}
The differentiation functor canonically embeds the category $\Csev$ as the full subcategory of $dg\cat{Man}_{\Z_2}$ of products $M\times End(\R^{0|1})$:
$$Lie:\Csev\to dg\cat{Man}_{\Z_2}, \qquad Lie(M\times  P(\R^{0|1}))=M\times End(\R^{0|1}).$$
\end{lemma}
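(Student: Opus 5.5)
The plan is to compute $Lie$ on objects via Proposition \ref{prop:severa-inf} and Theorem \ref{thm:4}, and then use the adjunction $N'\dashv K$ together with the fully faithful embeddings $C^\infty$ of supermanifolds and dg-supermanifolds into (co)simplicial/dg superalgebras to get full faithfulness on morphisms.

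First, on objects, I would use the super version of Theorem \ref{thm:4}, which identifies $Lie(G)$ with the dg-supermanifold whose algebra of functions is $N'(C(G))$. For $G=M\times P(\R^{0|1})$, Proposition \ref{prop:severa-inf} states that $X=C(M\times P(\R^{0|1}))$ is infinitesimal. By Lemma \ref{lemma:JJJ} this forces $\J_X=0$, so
$$N'(X)=X_N\cong C^\infty(M)[\epsilon,x]=C^\infty(M\times End(\R^{0|1})),$$
with the differential $d\epsilon=x$ of the de Rham differential on $\Omega(\R^{0|1})$. Hence $Lie(M\times P(\R^{0|1}))=M\times End(\R^{0|1})$, which is a dg-supermanifold modeled by $(M_0,\O_M[\epsilon,x])$. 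Sheafifying over $M_0$ is routine, since both sides are built degreewise as $C^\infty(M)\otimes(\cdot)$.

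Second, on morphisms, I would compute through the chain
\begin{align*}
Hom(M\times P(\R^{0|1}),M'\times P(\R^{0|1}))
&\cong Hom_{c\cat{Alg}_{\Z_2}}(X',X)\\
&\cong Hom_{c\cat{Alg}_{\Z_2}}(X',(X_N)_K)\\
&\cong Hom_{dg\cat{Alg}_{\Z_2}}(N'(X'),X_N)\\
&\cong Hom(M\times End(\R^{0|1}),M'\times End(\R^{0|1})).
\end{align*}
The justifications are as follows. The first step is full faithfulness of $C^\infty$ on simplicial supermanifolds. The second uses that $X$ is infinitesimal, so $X\cong(X_N)_K$. The third is the adjunction of Theorem \ref{thm:4}. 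The last uses $N'(X')=X'_N$ and full faithfulness of $C^\infty$ on dg-supermanifolds.

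The remaining point is to check that this composite bijection is the map induced by the functor $Lie$, i.e. by $N'$ on morphisms. This follows from naturality of the unit $X\to (N'X)_K$, which here is the canonical isomorphism $X\cong(X_N)_K$.

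The main obstacle I expect is the first step. Maps of simplicial supermanifolds into the constant-times-$P$ object must be genuinely determined by their algebras of global sections, including compatibility with all cofaces and codegeneracies. I also need the super Dold--Kan and Alexander--Whitney sign conventions to match, so that $X\cong (X_N)_K$ is an isomorphism of cosimplicial superalgebras and not merely of vector spaces. I would handle this by appealing to the super remark after Theorem \ref{thm:4} and checking signs on the generators $\pi_n,\Delta_n$.
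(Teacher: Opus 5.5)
Your proposal is correct and follows the paper's route. Full faithfulness of global sections reduces the statement to algebras, Proposition \ref{prop:severa-inf} shows the relevant cosimplicial algebras are infinitesimal, and Theorem \ref{thm:4} gives the equivalence; you simply unpack the last step via the adjunction bijection with invertible unit instead of citing full faithfulness of $K$, and your worry about super signs is already covered by Proposition \ref{prop:severa-inf} together with the super version of Theorem \ref{thm:4}.
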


\begin{proof}
Taking global sections is fully faithful, so it is enough to prove that the full subcategory of $c\cat{Alg}_{\Z_2}$ formed by the infinitesimal algebras
$C^\infty(M\times P(\R^{0|1}))=C^\infty(M)[\epsilon,x]_K$, see Proposition \ref{prop:severa-inf}, is equivalent to the full subcategory of $dg\cat{Alg}_{\Z_2}$ formed by the objects $C^\infty(M\times End(\R^{0|1}))=C^\infty(M)[\epsilon,x]$. This immediately follows from Theorem \ref{thm:4}.
\end{proof}


We are in a position to show that our geometric differentiation provides a complete solution to the representability problem proposed by Severa.

\begin{proposition}\label{prop:Severa}
Given $G$ be a simplicial supermanifold, then Severa's differentiation presheaf $F_G$ is represented by the dg supermanifold $A_G$ defined by our geometric differentiation $CE(A_G)=C^\infty_N(G)/\J$,
in the sense that the following diagram commutes up to natural isomorphism:
$$\xymatrix{ \Csev \ar@{^(->}[r]^{Lie} \ar[rd]_{F_G} &  
dg\cat{Man}_{\Z_2} \ar[d]^{Hom(-,A_G)} \\ & \cat{Sets}
}$$
\end{proposition}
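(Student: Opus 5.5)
The plan is to reduce everything to the adjunction of Theorem \ref{thm:4}, via the fully faithful embeddings into algebras. Fix an object $M\times P(\R^{0|1})$ of $\Csev$ and write $X=C(G)$ for the cosimplicial superalgebra of cochains and $Y_M=C^\infty(M)[\epsilon,x]=C^\infty(M\times End(\R^{0|1}))$. We compute both sides in terms of algebra morphisms.

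For the left-hand side, $C^\infty$ is fully faithful from $s\cat{Man}_{\Z_2}$ to $c\cat{Alg}_{\Z_2}$, so
$$F_G(M\times P(\R^{0|1}))\cong Hom_{c\cat{Alg}_{\Z_2}}\big(C(G),C(M\times P(\R^{0|1}))\big).$$
By Proposition \ref{prop:severa-inf}, the target is infinitesimal with normalization $Y_M$. Hence $C(M\times P(\R^{0|1}))\cong (Y_M)_K$ as cosimplicial algebras. Applying the super version of Theorem \ref{thm:4} then gives
$$Hom_{c\cat{Alg}_{\Z_2}}(X,(Y_M)_K)\cong Hom_{dg\cat{Alg}_{\Z_2}}(X_N/\J_X,Y_M)=Hom_{dg\cat{Alg}_{\Z_2}}\big(CE(A_G),Y_M\big),$$
using $N'(C(G))\cong CE(A_G)$ from Theorem \ref{thm:4}.

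For the right-hand side, $Hom_{dg\cat{Man}_{\Z_2}}(M\times End(\R^{0|1}),A_G)$ corresponds, under the fully faithful global sections functor on dg supermanifolds, to $Hom_{dg\cat{Alg}_{\Z_2}}(CE(A_G),Y_M)$. Here $A_G$ is a dg supermanifold by Theorem \ref{thm:2}, whose proof carries over to the super setting since it is local and coordinate-based. The two descriptions coincide.

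Naturality in the object of $\Csev$ comes in two parts. First, the adjunction isomorphism of Theorem \ref{thm:4} is natural in $Y$. Second, the identification $C(M\times P(\R^{0|1}))\cong (Y_M)_K$ is the canonical isomorphism $X'\cong (X'_N)_K$ of Remark \ref{rmk:dual-denormalization}, which is natural. Moreover, a morphism $M\times P(\R^{0|1})\to M'\times P(\R^{0|1})$ is sent by $Lie$, as in the preceding lemma, exactly to the normalized map $Y_{M'}\to Y_M$. So the whole square commutes up to natural isomorphism, which is the claim.

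The main obstacle I expect is the super/global-sections bookkeeping rather than the adjunction itself. One point is that $C(G)$ for a supermanifold should be taken as global sections of structure sheaves, and Theorem \ref{thm:4} must hold for $\Z_2$-graded cosimplicial algebras with the Koszul-signed cup product. The remark after Theorem \ref{thm:4} asserts this; I would verify that the sign conventions in $aw$ and $sh$ make Propositions \ref{prop:overlapping} and \ref{prop:infinitesimal-algebra} go through verbatim. The other point is that $dg\cat{Man}_{\Z_2}$-morphisms into $A_G$ are exactly dga maps out of $CE(A_G)$. This needs $CE(A_G)$ to be the global sections of a locally semi-free sheaf. I would obtain that from the localization Lemma \ref{lem:Jgenlocal} together with the frame splitting, which gives local generators.
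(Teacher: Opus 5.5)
Your proposal is correct and follows the paper's own proof essentially step for step. The shared chain is: full faithfulness of $C^\infty$, Proposition \ref{prop:severa-inf} to identify the target as $K(C^\infty(M)[\epsilon,x])$, the adjunction $N'\dashv K$ of Theorem \ref{thm:4} with $N'(C(G))\cong CE(A_G)$, and full faithfulness on dg supermanifolds. Your remarks on naturality in the object of $\Csev$ and on the super sign and global-sections bookkeeping just make explicit what the paper leaves implicit.
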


\begin{proof}
By combining Proposition \ref{prop:severa-inf} and Theorem \ref{thm:4} we get
\begin{align*}
F_G(M\times P(\R^{0|1})) &=Hom_{s\cat{Man}_{\Z_2}}(M\times P(\R^{0|1}),G) \\
& \cong
Hom_{c\cat{Alg}_{\Z_2}}(C(G), C(M\times P(\R^{0|1})))\\
& \cong
Hom_{c\cat{Alg}_{\Z_2}}(C(G), K(C^\infty(M)[\epsilon,x]))\\
& \cong
Hom_{dg\cat{Alg}_{\Z_2}}(N'C(G), C^\infty(M)[\epsilon,x])\\
& \cong
Hom_{dg\cat{Alg}_{\Z_2}}(CE(A_G), C^\infty(M\times End(\R^{0|1})))\\
&\cong
Hom_{dg\cat{Man}_{\Z_2}}(M\times End(\R^{0|1}),A_G).
\end{align*}
\end{proof}

\begin{remark}
Let us describe further the above bijection taking $\Phi\in Hom_{s\cat{Man}_{\Z_2}}(M\times P(\R^{0|1}),G)$ to a $\Phi': CE(A_G)\to C^\infty(M)[\epsilon,d\epsilon]$ corresponding to the map in $Hom_{dg\cat{Man}_{\Z_2}}(M\times End(\R^{0|1}),A_G)$. The information of the map $\Phi$ is completely encoded in the induced map between normalized cochains,
\[ \Phi^*: C^\infty_N(G) \to C^\infty(M)\otimes C^\infty_N(P(\R^{0|1})) \simeq C^\infty(M)[\epsilon,d\epsilon] = C^\infty(M)\otimes CE(A_{P(\R^{0|1})}) \]
where we used that normalized cochains already yield the infinitesimal ones on the right, by Prop. \ref{prop:severa-inf}. Then, such a map must factor through the van Est differentiation map $\ve:C^\infty_N(G)\to CE(A_G)$, $\Phi^*=\Phi' \circ \ve$, defining the corresponding dga map $\Phi'$. Using generators induced by a frame for $G$, we have $\Phi^*(x_{\iota_n,\ell})= \Phi'(\ell)$ for $\ve(x_{\iota_n,\ell})=\ell \in \Gamma A_n^*$.
\end{remark}

Following \cite{L-BS}, there is an equivalent alternative approach to Severa's representability problem, where one first seeks a supermanifold $\mathcal M_G$ representing the presheaf
$$\tilde F_G:\cat{Man}_{\Z_2}\to\cat{Sets}, \qquad
\tilde F_G(M)=
Hom_{s\cat{Man}_{\Z_2}}(M\times  P(\R^{0|1}),G)$$
and then endowed it with the NQ-structure coming from the natural $End(\R^{0|1})$-action. The two representability problems can be seen to be equivalent via an abstract general $End(\R^{0|1})$-equivariant Yoneda principle, see \cite{Severa}. 
Next we provide a concrete coordinate-based discussion showing that our $A_G$ also solves this representability problem.

We need to show that: a) $Hom_{dg\cat{Man}_{\Z_2}}(M\times End(\R^{0|1}),A_G)\simeq Hom_{Man_{\Z_2}}(M,\mathcal{M}_{A_G})$ naturally on $M$ and with $\mathcal{M}_{A_G}$ the supermanifold corresponding to $A_G$ (as in Remark \ref{rmk:superA}); b) the $NQ$-structure on $\mathcal{M}_{A_G}$ induced by the endomorphisms on $\R^{0|1}$ 
corresponds to our Chevalley-Eilenberg dga structure on $CE(A_G)=C^\infty(\M_{A_G})$.
Both these facts can be easily checked at the level of generators. Let $\varphi:S(\Gamma A_G^*) \overset{\sim}{\to} CE(A_G)$ be a splitting (e.g. $\varphi=\phi_\star$ induced by a frame on $G$) and denote $d_\varphi$ the induced differential on the domain. We clearly have 
\begin{eqnarray*} 
Hom_{dg\cat{Man}_{\Z_2}}(M\times End(\R^{0|1}),A_G)&=&Hom_{dg\cat{Alg}}(CE(A_G),C^\infty(M)[\epsilon,x])\\ &\simeq &  Hom_{dg\cat{Alg}}(S(\Gamma A^*),C^\infty(M)[\epsilon,x]) \ni \Phi'
\end{eqnarray*}
and that an element $\Phi'$ as above is characterized by its values on generators $\ell \in \Gamma A_n^*$,
$$ \Phi'(\ell) = (z_{n,\ell}+ \tilde z_{n,\ell} \ \epsilon) (d\epsilon)^n, \ \  z_{n,\ell}, \tilde z_{n,\ell}\in C^\infty(M).$$
Since $\Phi'$ commutes with differentials, we can determine $\tilde z$'s in terms of the $z$'s. 
Indeed, before differentiation (in terms of the $\Phi$ of the above Remark), this can be seen conceptually from the fact that every $n$-simplex in the pair groupoid nerve  $P(\R^{0|1})$ is the $0$-face of the $(n+1)$-simplex obtained by adding $0\in \R^{0|1}$ in the zeroth vertex. 
The coefficients $z_{n,\ell}$ can be understood as suitably parametrizing the superspace of dga maps $\Phi'$. In turn, these coefficients can be identified with $M$-families of elements in $\Gamma A_n$ with parity $n\ mod \ 2$. 
In this way, $\Phi' \mapsto (z_{n,\ell})$ induces the desired bijection in a) above.
Finally, since $\epsilon$-translations by an odd parameter $\theta$ induce $z_{n,l}\mapsto \theta \tilde z_{n,l}$ on the space of maps $\Phi'$, and unravelling the mentioned relations between $z$ and $\tilde z$ coming from $d\Phi'=\Phi' d_\varphi$, it easily follows that the $Q$-structure of b) above corresponds to $d_\varphi\simeq d_{CE(A_G)}$, as wanted. 
The $N$-grading on $CE(A_G)$ is analogously shown to coincide with the one induced by rescaling in $\R^{0|1}$, $\epsilon \mapsto \lambda \epsilon, \ \lambda>0$. See  \cite{L-BS} for a particular illustration in the context of Courant algebroids.

\frenchspacing

{

}

\

\noindent
Alejandro Cabrera\\
Instituto de Matemática, Universidade Federal do Rio de Janeiro (UFRJ), Rio de Janeiro, Brazil.\\
E-mail: alejandro@matematica.ufrj.br 

\

\noindent
Matias del Hoyo\\
Instituto de Matemática, Universidade Federal do Rio de Janeiro (UFRJ), Rio de Janeiro, Brazil.\\
E-mail: mdelhoyo@im.ufrj.br

\end{document}